\newcommand{\aaa}{\mathcal{A} }
\newcommand{\open}[1]{\mathcal{N}^{\mathcal{A}}_{#1}}
\newcommand{\opencorn}[2]{\mathcal{N}^{\mathsf{W}(#1)}_{#2}}
\newcommand{\closedcorn}[2]{\mathcal{C}^{\mathsf{W}(#1)}_{#2}}
\newcommand{\closed}[1]{\mathcal{C}^{\mathcal{A}}_{#1}}
\newcommand{\corn}[1]{(\mathsf{W}(#1),\tau_{#1})}
\newcommand{\wall}[2]{\mathsf{W}(#1,#2)}
\newcommand{\spa}{\sigma^\aaa_P}
\newcommand{\atpa}{(\aaa, \tau^\aaa_P)}
\title{Reflecting compact $T_1$-spaces into bounded distributive lattices}
\author{Mai Gehrke, Elena Pozzan, Matteo Viale}
\theoremstyle{plain}
	\newtheorem{theorem}{Theorem}[subsection]
	\newtheorem{proposition}[theorem]{Proposition}
	\newtheorem{lemma}[theorem]{Lemma}
	\newtheorem{corollary}[theorem]{Corollary}
	\newtheorem{fact}[theorem]{Fact}
	\newtheorem{question}[theorem]{Question}
	\newtheorem{claim}{Claim}
\theoremstyle{definition}
	\newtheorem{definition}[theorem]{Definition}
	\newtheorem{notation}[theorem]{Notation}
	\newtheorem{example}[theorem]{Example}
\theoremstyle{remark}
	\newtheorem{remark}[theorem]{Remark}
\DeclareMathOperator{\St}{St}
\DeclareMathOperator{\RO}{RO}
\newcommand{\bool}[1]{\mathsf{#1}}
\newcommand{\pow}[1]{\mathcal{P}\left(#1\right)}
\newcommand{\Cl}[1]{\text{Cl}\left(#1\right)}
\newcommand{\Reg}[1]{\text{Reg}\left(#1\right)}
\newcommand{\cp}[1]{\left( #1 \right)}
\newcommand{\bp}[1]{\left\lbrace #1 \right\rbrace}
\begin{document}
\maketitle

\begin{abstract}
We present a contravariant reflection of the compact $T_1$-spaces with arrows given by \emph{closed} continuous functions into the category of bounded distributive lattices with arrows given by \emph{closed subfit} morphisms. This reflection extends both Stone duality and Isbell's duality between frames and sober spaces for those compact $T_1$-spaces that fall within each of these dualities, that is, respectively, zero-dimensional compact Hausdorff spaces, and compact sober $T_1$-spaces. 
On the topological side, we allow all compact $T_1$-spaces rather than just sober ones and we identify points in these with \emph{minimal} prime filters on \emph{some base}. 
On the lattice side, the shift goes from the notion of frame homomorphism to that of \emph{closed subfit} morphism between bounded distributive lattices (closed subfit morphisms are defined by a natural and first order expressible constraint).
%
The reflection becomes a duality when one restricts on the algebraic side to the complete and compact \emph{subfit} lattices (i.e. compact subfit frames). 
Furthermore, restricting our duality on the topological side to the subcategory of compact $T_2$-spaces with \emph{all} continuous maps, 
we obtain a duality for these with the category of complete, compact and normal lattices, thus recovering a classical result of Cornish. 

We also relate our adjunction to the duality introduced by Maruyama between $T_1$-spaces with continuous maps and a category having as objects a particular type of subfit frames and as arrows a certain type of morphism, of which we give an alternative and explicit 
algebraic characterization. 
\end{abstract}

\textbf{MSC:} 06D50, 54D10, 54D30, 54D35,54D70

\section{Introduction}
This\footnote{The third author acknowledges support from the projects
\emph{PRIN 2017-2017NWTM8R
Mathematical Logic: models, sets, computability, PRIN 2022 Models, sets and classifications, prot. 2022TECZJA}, and from GNSAGA.

\textbf{MSC:} \emph{06B05, 06D22, 18F70, 54H12.}  

The authors thank Tristan Bice, and Wieslaw Kubis for many useful indications, especially on the existing bibliography on the topic and the connections of the present results with other works. 
} paper is motivated by the following loosely related questions on general topology:
\begin{enumerate}
\item
Looking at a topological space with the algebraic perspective that duality results (such as Stone's or Isbell's dualities\footnote{Isbell's duality is for us the duality between sober spaces and spatial frames as presented for example in \cite[Cor. IX.3.4]{MacLMoer}; Stone's duality is either the well known one between boolean algebras and Compact $0$-dimensional spaces, or its lesser known generalization between bounded distributive lattices and spectral spaces; see for details the Appendix \ref{Appendix}.}) bring forward,
what is the optimal/right/more efficient algebraic characterization of the notion of point?
\item
The Tychonoff separation property describes the class of Hausdorff spaces which admit a Hausdorff compactification. However the standard definition of this property (e.g. see \cite[Def. D.3.3]{VIABOOKFORCING}) is neither algebraic in nature, nor intrinsic, i.e. to detect whether $(X,\tau)$ is Tychonoff, we need to consider a web of relations between the topology $\tau$, the euclidean topology on $\mathbb{R}$, and the points of $X$. This is in contrast to what happens with other separation properties, whose definitions usually are expressed in terms of lattice-theoretic properties of the spatial frame $\tau$, and of the prime filters of $\tau$ given by the points of $X$.
\end{enumerate}

Note that the answers to the first question given by Stone's and Isbell's dualities are  conflicting:
\begin{itemize}
\item
Stone duality for Boolean algebras identifies points with \emph{ultrafilters} of the Boolean algebra of \emph{clopen} sets,
\item
Isbell duality identifies points with \emph{completely} prime filters on the frame of \emph{open} sets.
\end{itemize}
Abstractly, in both cases, one has a class of bounded distributive lattices of interest (respectively, Boolean algebras and frames) and one attaches to each member of the class a topological space whose points are a certain family of prime filters.

Let us consider the class of algebras for which both notions of point make sense, i.e. the complete Boolean algebras. When a complete Boolean algebra is atomless, it has no points according to Isbell's notion, while it has plenty of points according to Stone's notion. Can we reconcile these opposing views?

It turns out that the answer to the second question, on the Tychonoff separation property, pioneered by Frink and Cornish, based on ideas by Wallman (see \cite{Cornish1972NormalL,frink,WallmanComp37}) gives useful insights on how to recompose this opposition.

A starting point of Frink and Cornish is the observation that key properties of a topological space $(X,\tau)$ are captured by \emph{some} base, $P$, for the topology $\tau$ (depending on the context, $P$ could be the family of complements of $0$-sets, or the family of clopen sets, or the family of regular open sets, etc) and by the ensuing families of filters $F_{x,P}$ given by the open neighboorhoods of $x$ that belong to $P$. Frink, in \cite{frink}, characterizes Tychonoff spaces as those spaces admitting a \emph{normal} base for the closed sets (i.e. a base for the closed sets which satisfies exactly those algebraic properties of the lattice of $0$-sets which are essential in the proof of the Stone-\v{C}ech compactification theorem, see Definition~\ref{def:normalbasis}). Subsequently Cornish, in \cite{Cornish1972NormalL}, identifies the first order axiomatizable class of bounded distributive lattices whose spaces of \emph{minimal} prime filters $F$ are compact Hausdorff (the normal lattices, see Definition~\ref{def:normallattice}).\footnote{The topology of open sets of such a space is generated by the sets $N_p$ consisting of those $F$ with $p\in F$.} 


Here, we will present what we consider a satisfactory approach to our first question while encompassing key portions of both Isbell's and Stone's duality; this approach has the further benefit of allowing us to extend the results of Cornish to the class of compact $T_1$-spaces. Our 
answers to the first question are: 
\begin{enumerate}[(a)]
\item
Points are \emph{minimal} prime filters on lattices (as in Wallman/Frink/Cornish); this gives an efficient characterization of the notion of point for $T_1$-spaces (see Proposition \ref{prop:separationproperties}(4)).
\item
The interesting lattices to be considered are those which are isomorphic to a base of some $T_1$-topological space (i.e. the subfits of Isbell~\cite{Isbell}, see Def. \ref{def:subfit} and Cor. \ref{cor:subfit}).
\item Compactness plays a crucial role and allows to reconcile Isbell and Cornish's notion of point; in particular on compact subfit frames Cornish points are Isbell points (see the analysis of the comparison between Cornish and Isbell's notions we carry out in Section \ref{subsec:cornishvsisbell}).
\end{enumerate}

Our first main result (Thm. \ref{thm:adj-dual}) is the identification of a contravariant reflection between:
\begin{itemize} 
\item
the category having as objects the compact $T_1$-spaces and as arrows the \emph{closed} continous maps on them, and
\item
the category $\bool{SbfL}_\bool{closed}$ having as objects the subfits, i.e. the (first order axiomatizable) class of bounded distributive
lattices which are isomorphic to a base for some $T_1$-topological space, and as arrows \emph{closed subfit} morphisms, a class of morphisms defined by a natural (and first order expressible) constraint.\footnote{Note that the reflection is well defined also on the larger category given by bounded distributive lattices with \emph{subfit} morphisms.} 
\end{itemize}


This contravariant reflection embeds the compact $T_1$-spaces with continuous closed maps as a reflective subcategory of $\bool{SbfL}_\bool{closed}$; furthermore this adjunction restricts to a duality on the subfamily of compact complete lattices in $\bool{SbfL}_\bool{closed}$.
As a byproduct of the above, we can generalize to a duality (i.e. we extend to arrows) Cornish's result of \cite{Cornish1972NormalL} which identifies compact Hausdorff spaces with compact and complete normal lattices.
One of our adjuncts assigns to a lattice in $\bool{SbfL}$ its space of minimal prime filters, and ---when applied to a Boolean algebra--- outputs its Stone-space (hence it generalizes to a much larger class of lattices the functor which Stone's duality defines on the class of Boolean algebras), while our other adjunct is the restriction to compact $T_1$-spaces of the forgetful functor $(X,\tau)\mapsto\tau$ from topological spaces to spatial frames used in Isbell's duality.\footnote{Note that in compact $T_1$-spaces the minimal prime filters on the topology are always completely prime (see Thm. \ref{lem:primecomplprime}); however in general completely prime filters on a spatial frame need not be minimal prime filters. In particular applying both functors of the Isbell's duality to a compact $T_1$-spaces $(X,\tau)$ may not yield back the original space. On the other hand this will be the case when applying both functor of our adjunction to a compact and $T_1$ $(X,\tau)$.}

Another interesting corollary of our adjunction is a lattice-theoretic reformulation of the Stone-Cech compactification theorem (Thm. \ref{thm:stonecech-lattices}). It applies to lattices which are duals of compact $T_1$-spaces (i.e. the compact complete subfits), and carves inside them their largest normal sublattice.

Finally we connect our adjunction to a duality between $T_1$-spaces and subfit frames 
introduced by Maruyama in \cite{maruyama}. While the Maruyama duality functor on objects is conspicuously defined, Maruyama describes implicitly the relevant arrows in the category of lattices, as those morphisms whose dual maps happen to be continuous; in the present paper we give an explicit algebraic characterization of these morphisms (Prop. \ref{prop:charsubfitmorphism}).
Finally we show that when we restrict Maruyama's duality to compact subfit frames, it exactly coincides with our duality on objects; this occurs because compactness allows to identify Cornish's notion of point which is at the heart of our definition of the right adjunct from subfit lattices to compact $T_1$-spaces, with Isbell's notion of point which is at the heart of Maruyama's definition of the right adjunct in his duality between subfit frames and $T_1$-spaces (provided -as it occurs in Maruyama's duality- that one restricts the space of Isbell's points to the family of \emph{minimal} completely prime filters), see Proposition \ref{lem:primecomplprime}.
%

The paper is structured as follows:

\begin{itemize}
\item  Section \ref{Preliminaries-background} presents some known results and backgorund material. Specifically:
\ref{subsec:topspacesgivenbyfilters}
shows that any $T_0$-space $(X,\tau)$ can be presented as a family of filters on a partial order $P\subseteq \tau$ which is a base for $\tau$;
\ref{subsec:separation}
uses this characterization to reformulate the $T_1$ and $T_2$-separation properties of a space $X$ in terms of order-theoretic properties of the poset $P$ and the family of filters on $P$ used to present $X$.

\item Section \ref{sec:filtersonboundeddistributivelattices} develops a lattice-theoretic reformulation of several topological properties among which: compactness and normality.
It also presents a general machinery (which rephrases the Wallman compactification process, stemming from the seminal work \cite{WallmanComp37}, in the terminology developed here) attaching to a bounded distributive lattice $P$ the compact $T_1$-space $(\bool{W}(P),\tau_P)$, where: $\bool{W}(P)$ is given by the minimal prime filters $F$ on $P$, the topology $\tau_P$ is generated by the sets $N_p$ consisting of those $F$ with $p\in F$. The results of Frink and Cornish appearing in \cite{Cornish1972NormalL,frink} are then presented in this framework.

\item
Section \ref{sec:adjunctions}
presents the first main results, i.e the adjunction between compact $T_1$-spaces with closed continuous maps and $\bool{SbfL}_\bool{closed}$ (Thm. \ref{thm:adj-dual}), and the lattice-theoretic reformulation of the Stone-\v{C}ech compactification theorem (Thm. \ref{thm:stonecech-lattices}).
\item
Section \ref{sec:relation with pre-existence literature} 
relates the results of the preceding sections to the existing literature, specifically:
\begin{itemize}
\item
We first compare the Isbell and Cornish notions of point and analyze the different compactification of a $T_0$-space $(X,\tau)$ given by its sobrification, or by applying to it the Wallman operator we describe in Section \ref{sec:filtersonboundeddistributivelattices}, or by the compactification induced by applying the Stone duality for distributive lattices on some base for $\tau$ closed under finite unions and intersections.
\item
Next we give an explicit algebraic characterization of the notion of continuous map between $T_1$-spaces and use it to outline the connections of the results of Section \ref{sec:adjunctions} with the mentioned duality of Maruyama, and also with a duality appearing in \cite{bice2020wallmandualitysemilatticesubbases} between compact $T_1$-space and a certain category of semilattices with arrows given by a specific type of binary relations (which may not be functional).
We also compare our results to those appearing in \cite{bice2020wallmandualitysemilatticesubbases}.
\end{itemize}
\item
The Appendix \ref{Appendix} includes the basics on the Stone and Isbell dualities needed in Section \ref{sec:relation with pre-existence literature}.

\end{itemize}

The paper is mainly the work of the second author under the supervision of the third author. The first author contributed at a late stage, with important inputs on the existing literature and on how to connect the present results to other well known dualities. 
More precisely:
the second and third author contributed the most part of Sections \ref{sec:filtersonboundeddistributivelattices} and \ref{sec:adjunctions} (note that of these two sections only Section \ref{sec:adjunctions} contains original results), while the results in Section  \ref{sec:relation with pre-existence literature}  are essentially due to the first and second author.

\section{Preliminaries and background material}\label{Preliminaries-background}

In sections \ref{subsec:topspacesgivenbyfilters} and \ref{subsec:separation}, we recall several folklore results characterizing $T_0$-topological spaces in terms of preorders that describe a topological base for the space. We also discuss the relationships between the combinatorial properties of these preorders and the separation and compactness properties of the associated spaces. Although most of these results are well known, we include them for completeness and to facilitate their use in subsequent sections.

\subsection{Topological spaces given by filters on preorders}
\label{subsec:topspacesgivenbyfilters}

\begin{definition}
\label{def:densefamilyfilters}
    Let $P$ be a preorder with a maximum $1_P$. A \emph{prefilter} is a non-empty subset of $P$ such that all its finite subsets have a lower bound in \(P\) that is not a minimum of \(P\). A \emph{filter} $F$ of $P$ is a non-empty subset of \(P\) such that it is closed above in the order and all its finite subsets have a lower bound in \(F\).
A family $\aaa$ of filters in $P$ is \emph{dense in $P$} if for every $p \in P $ that is not a minimum,\footnote{Note that $p$ could be a minimal element (this will correspond to an isolated point).}  there is $F \in \aaa$ such that $p \in F$.
\end{definition}

Given a family of filters $\aaa$ on a preorder $P$, we can define a topological space $\atpa$ whose points are the filters in $\aaa$ and whose topology is generated by the sets 
\[
\open{p}=\bp{F \in \aaa: p \in F}
\]
for every $p \in P$. We also define the corresponding 
subbase for the closed sets  $\spa=\bp{ \closed{p}: p \in P}$ where 
\[
\closed{p}=\aaa \setminus \open{p}
\]

\begin{fact}
\label{fact:omeomspacespreorders}
Let $(X, \tau)$ be a topological space and $\mathcal{B}$ be a basis of non-empty open sets for $\tau$. Let $P=(\mathcal{B}, \subseteq)$ and $\aaa_X=\bp{F_x : x \in X}$ where $F_x=\bp{ U \in \mathcal{B}: x \in U}$. Then, the map 
\begin{align*}
       i_{(X,\tau)}^\mathcal{B}: (X, \tau) &\to (\aaa_X,\tau^{\aaa_X}_P)\\
       x &\mapsto F_x
\end{align*}
is open, continuous and surjective. Moreover, $i_{(X,\tau)}^\mathcal{B}$ is a homeomorphism if and only if $(X,\tau)$ is $T_0$.
\end{fact}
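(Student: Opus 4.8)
The plan is to unwind the definitions and verify each of the four claimed properties ($i$ is well-defined, open, continuous, surjective) directly, saving the $T_0$-characterization of when $i$ is a homeomorphism for last. Throughout, write $i = i_{(X,\tau)}^{\mathcal{B}}$ and recall $i(x) = F_x = \bp{U \in \mathcal{B}: x \in U}$, which is indeed a filter on $P = (\mathcal{B},\subseteq)$: it is closed upwards in $\subseteq$ among basic sets, and any finite subfamily has the nonempty intersection-neighborhood containing $x$ refined by some basic set still containing $x$, which lies in $F_x$ and bounds them below. Surjectivity is immediate since $\aaa_X$ is defined as $\bp{F_x : x \in X}$, so every point of the target is hit by construction.

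For \emph{continuity and openness}, the key is that the basic open sets upstairs and downstairs correspond cleanly under $i$. First I would compute, for each $U \in \mathcal{B} = P$, the preimage and image of the subbasic set $\open{U} = \bp{F \in \aaa_X : U \in F}$. The crucial identity is
\[
i^{-1}[\open{U}] = \bp{x \in X : U \in F_x} = \bp{x \in X : x \in U} = U,
\]
using the definition $U \in F_x \iff x \in U$. Since the sets $\open{U}$ generate $\tau^{\aaa_X}_P$ and each pulls back to the basic open set $U \in \tau$, continuity follows. For openness, since $\mathcal{B}$ is a base it suffices to check $i$ sends each basic $U$ to an open set; the same identity gives $i[U] = \bp{F_x : x \in U} = \open{U} \cap \aaa_X = \open{U}$, which is (sub)basic open in the target. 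Because $i$ is surjective, this image computation is unambiguous, and openness is established.

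For the \textbf{final equivalence}, I would argue both directions. If $(X,\tau)$ is $T_0$, then $i$ is injective: given $x \neq y$, some basic open $U$ contains exactly one of them (a base detects the $T_0$-separating open set), so $F_x \neq F_y$. A continuous, open, injective surjection is a homeomorphism, so combined with the three properties above we are done. Conversely, if $i$ is a homeomorphism it is in particular injective, and injectivity of $x \mapsto F_x$ says exactly that distinct points have distinct families of basic neighborhoods, i.e. some basic (hence some) open set separates them — which is the $T_0$ property.

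The \emph{main obstacle} is really just bookkeeping: being careful that the map lands in $\aaa_X$ (not a larger filter space) so that image-computations like $i[U] = \open{U}$ hold on the nose, and checking that $T_0$ genuinely yields injectivity through a \emph{basic} separating open rather than an arbitrary one — this uses that $\mathcal{B}$ is a base, so any separating open contains a basic one still separating the two points. No step is deep; the content is matching the generated topology $\tau^{\aaa_X}_P$ against $\tau$ via the single identity $U \in F_x \iff x \in U$.
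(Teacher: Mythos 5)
Your proposal is correct and follows essentially the same route as the paper's proof: both hinge on the single identity $U \in F_x \iff x \in U$, yielding $i^{-1}[\open{U}] = U$ for continuity and $i[U] = \open{U}$ for openness, with the $T_0$ condition equivalent to injectivity. Your version just spells out a few details (that $F_x$ is a filter, that a base suffices to detect $T_0$-separation) that the paper leaves implicit.
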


\begin{proof} 
Note that $i_{(X,\tau)}^\mathcal{B}[U]=\bp{F_x: x \in U}=\mathcal{N}_U^{\aaa_X}$ for every $U \in P$, thus the map is open. Similarly, $i_{(X,\tau)}^\mathcal{B}$ is continuous: 
\[
(i_{(X,\tau)}^\mathcal{B})^{-1}[\mathcal{N}_U^{\aaa_X}]=\bp{x \in X: F_x \in\mathcal{N}_U^{\aaa_X}}=\bp{x \in X: U \in F_x}=U.
\] 
Clearly,  $i_{(X,\tau)}^\mathcal{B}$ is injective if and only if $(X,\tau)$ is $T_0$. 
\end{proof}

Given a preorder $(P, \le)$  and $x, y \in P$, we say that $x$ and $y$ are \emph{incompatible} (denoted by $x \perp y$) if:
\[
\forall z \in P\,[ (z \leq x \wedge z \leq y) \implies z \mathrm{ \ is \ a\ minimum \ of\ } P] 
\]

A preorder $(P, \le)$ is \emph{separative} if for all $x$ and $y$ in $P$: 
\begin{equation*}
        x \not \leq y \implies \exists z \in P\,( z \leq x \wedge z \perp y) 
    \end{equation*} 

Clearly, if $p$ and $q$ are incompatible elements of $P$, then there is no refinement of $p$ and $q$, and therefore there is no filter on $P$ that contains both $p$ and $q$.

The regularization, $\Reg{A}$, of a set $A$ in a topological space $(X,\tau)$ denotes the interior of the closure of $A$; $A$ is regular open if $A=\Reg{A}$.
$\RO(X,\tau)$ is the complete Boolean algebra of regular open subsets of $(X,\tau)$.

Given a preorder $(P,\le)$ and $X\subseteq P$, 
\[
\downarrow X=\bp{q:\exists p\in X\, q\le p}
\]
is the downward closure of $X$ (for notational simplicity
$\downarrow p=\downarrow\bp{p}$ for $p\in P$).
The downward topology $\tau_P$ on a partial order $P$ is given by the downward closed subsets of $P$.

\subsection{Separation}
\label{subsec:separation}

\begin{proposition}
\label{prop:separationproperties}
Let $P$ be a preorder with a unique maximum and minimum and let $\aaa$ be a dense family of proper filters on $P$. 
Then: 
\begin{enumerate}
        \item \label{prop:separationproperties-1}
$\atpa$ is a $T_0$ topological space;

        \item \label{prop:separationproperties-2}
$\bp{\open{p}: p \in P}$ is a basis for $(\aaa, \tau^\aaa_P)$;

        \item \label{prop:separationproperties-3}
for each $F \in \aaa$, $\bp{\open{p}: p \in F}$ is a neighborhood basis for F;

        \item \label{prop:separationproperties-4}
$\atpa$ is $T_1$ if and only if $\aaa$ is an antichain in   $(\pow{P}, \subseteq)$;

        \item \label{prop:separationproperties-5}
for each $F \in \aaa$ and $p \in P$, $F \in \Cl{\open{p}}$ if and only if $q$ and $p$ are compatible in $P$ for all $q \in F$. In particular, $F \in \aaa \setminus \Cl{\open{p}}$ if and only if there exists $q \in F$ such that $q \perp p$;

        \item \label{prop:separationproperties-6}
$\atpa$ is Hausdorff if and only if for 
distinct $F, G \in \aaa$, $F \cup G$ is not a prefilter on $P$; 
%
%

    \end{enumerate}
\end{proposition}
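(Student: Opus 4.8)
The plan is to prove the two structural claims (2) and (3) first, since they reduce every separation statement to a combinatorial condition on the filters in $\aaa$; then (1), (4), (5), (6) follow by translating the relevant topological notion through the neighborhood basis. For (2), given $F \in \open{p} \cap \open{q}$ we have $p, q \in F$, so the filter property yields a common lower bound $r \in F$ of $\{p,q\}$, and upward closure gives $F \in \open{r} \subseteq \open{p} \cap \open{q}$; hence the $\open{p}$ form a basis. Claim (3) is then immediate: each $\open{p}$ with $p \in F$ is a neighborhood of $F$ (as $p \in F$ means $F \in \open{p}$), while any open $U \ni F$ contains a basic $\open{q} \ni F$ by (2), forcing $q \in F$. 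Claim (1) also drops out, since distinct $F \neq G$ differ in some $p \in F \setminus G$, and $\open{p}$ separates them in the $T_0$ sense.

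For (4) I would use (3) directly. If $\atpa$ is $T_1$ and $F \neq G$, some neighborhood of $F$ misses $G$, so by (3) there is $p \in F$ with $G \notin \open{p}$, i.e. $p \in F \setminus G$, giving $F \not\subseteq G$; symmetry yields the antichain condition. Conversely, if $\aaa$ is an antichain, choosing $p \in F \setminus G$ produces a neighborhood $\open{p}$ of $F$ avoiding $G$, and symmetrically, so all singletons are closed.

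The heart of the argument is the compatibility lemma underlying (5) and (6): $\open{p} \cap \open{q} \neq \emptyset$ if and only if $p$ and $q$ are compatible. The forward direction is the remark preceding the proposition, that a filter cannot contain incompatible elements. The converse is exactly where density enters: if $p, q$ admit a common lower bound $r$ that is not the minimum, density furnishes $F \in \aaa$ with $r \in F$, and upward closure puts $p, q \in F$, so $F \in \open{p} \cap \open{q}$. With this lemma, (5) follows by unwinding $F \in \Cl{\open{p}}$ through the neighborhood basis of (3): $F$ lies in the closure iff every basic neighborhood $\open{q}$ (with $q \in F$) meets $\open{p}$, iff every $q \in F$ is compatible with $p$; negating gives the ``in particular'' clause.

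Finally, (6) combines (3) with the compatibility lemma and the filter meet property. Hausdorffness means distinct $F, G$ admit disjoint neighborhoods, which by (3) may be taken basic, i.e. there are $p \in F$, $q \in G$ with $\open{p} \cap \open{q} = \emptyset$, equivalently $p \perp q$. So the space fails to be Hausdorff at $(F,G)$ precisely when all $p \in F$, $q \in G$ are compatible, and I would check this is equivalent to $F \cup G$ being a prefilter: any finite subset of $F \cup G$ splits into an $F$-part and a $G$-part whose meets $p \in F$, $q \in G$ exist by the filter property, so the whole set has a non-minimal lower bound iff $p$ and $q$ are compatible. The main obstacle I anticipate is keeping the role of density explicit in the converse of the compatibility lemma --- without it, compatibility of $p,q$ would only guarantee a common refinement in $P$, not an actual point of $\aaa$ witnessing $\open{p} \cap \open{q} \neq \emptyset$ --- together with, for (6), the bookkeeping that reduces the prefilter condition on arbitrary finite subsets to pairwise compatibility of the $F$- and $G$-meets.
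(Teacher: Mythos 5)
Your proposal is correct and follows essentially the same route as the paper: items (2) and (3) are established first, the compatibility lemma ($\open{p}\cap\open{q}\neq\emptyset$ iff $p$ and $q$ are compatible, with density used for the converse) is the key step for (5) and (6), and the prefilter bookkeeping in (6) matches the paper's. The only difference is presentational --- you isolate the compatibility lemma explicitly and spell out the $T_1$/antichain equivalence that the paper compresses into one line --- which is fine.
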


\begin{proof} $ $
    \begin{enumerate}
        \item[\ref{prop:separationproperties-1}] If $F \neq G$ are filters of $\aaa$, then we can assume, without loss of generality, that there exists an element $p$ of $P$ such that  $p \in F$ and $p \notin G$. Then $F \in \open{p}$ and $G \notin \open{p}$.

         \item[\ref{prop:separationproperties-2}]  Assume $\open{p_1} \cap \dots \cap \open{p_k} \neq \emptyset$. Let $F \in \open{p_1} \cap \dots \cap \open{p_k} $, i.e. $p_1,\dots,p_k \in F$. Since $F$ is a filter, there exists $q \in F$ such that $q \leq p_i$ for $i=1, \dots, n$. Then  $\open{q} \subseteq \open{p_1} \cap \dots \cap \open{p_k}$.

         \item[\ref{prop:separationproperties-3}]  Let $F \in \aaa$ and $A$ be an open set of $\atpa$ such that $F \in A$. By item \ref{prop:separationproperties-2}, $A = \bigcup \bp{\open{r} : r \in R}$ for some non-empty set $ R \subseteq P$. Therefore, $F \in \open{r}$ and $\open{r} \subseteq A$ for some $r \in R$.

         \item[\ref{prop:separationproperties-4}]   $F \subseteq G$ if and only if $G \in \open{p}$ for every $p \in F$.

         \item[\ref{prop:separationproperties-5}]    Since $\bp{\open{q}: q \in F}$ is a neighborhood basis for $F$, $F \in \Cl{\open{p}}$ if and only if $\open{q} \cap \open{p} $ is non-empty for all $q \in F$. If $F \in \Cl{\open{p}}$ and $q \in F$, then there exists a filter $H \in \open{p} \cap \open{q}$, i.e. $p,q \in H$. Hence, $p$ and $q$ are compatible. Conversely, suppose that $\open{p} \cap \open{q}=\emptyset$ for some $q \in F$. By contradiction, assume that $p,q\geq r$ for some $r \in P$.
         Since $\aaa$ is dense, there exists $H\in\aaa$ such that $r \in H$, but this implies that $p,q \in H$, i.e. $H \in \open{p} \cap \open{q}$.

         \item[\ref{prop:separationproperties-6}]  We have already proved that for any $p,q \in P$, $\open{p} \cap \open{q}=\emptyset$ if and only if $p \perp q$. Let $F\neq G$ in $\aaa$. $F \cup G$ is not a prefilter on $P$ if and only if there exist $p \in F$ and $q \in G$ such that $p \perp q$ if and only if $F \in \open{p}$, $G \in \open{q}$ and $\open{p} \cap \open{q}=\emptyset$.

\end{enumerate}
\end{proof}
%
%
%

\section{Filters on bounded distributive lattices}
\label{sec:filtersonboundeddistributivelattices}

In this section our focus is the investigation of algebraic conditions on a bounded distributive lattice $P$ that characterize topological properties of the spaces $(\mathcal{A},\tau^{\mathcal{A}}_P)$ induced by families of filters on $P$. We aim to characterize algebraically which bounded distributive lattices can be represented as a base of open sets of some $T_1$-space (e.g. subfits), and which bounded distributive lattices give rise to bases for compact Hausdorff spaces.
A key notion for this analysis is given by that of minimal prime filter.

Recall that:
\begin{itemize}
\item
An \emph{upper semi-lattice} $(P,\leq,\vee)$ is a preorder with a binary supremum operation $(a,b)\mapsto a\vee b$, while a \emph{lower semi-lattice} $(Q,\leq,\wedge)$ is a preorder with a binary infima operation $(a,b)\mapsto a\wedge b$.
\item
A \emph{lattice} $(P,\leq,\vee,\wedge)$ is both an upper and lower semi-lattice. 
\item 
A lattice $(P,\leq,\vee,\wedge,0,1)$ is \emph{bounded} if it has a maximum $1$ and a minimum $0$, while it is \emph{distributive} if it satisfies the distributivity laws holding for $\cap$ and $\cup$ in a field of sets (with $\cap$  and $\cup$ interpreting $\wedge$ and $\vee$).
\end{itemize}

 \begin{definition}
 \label{def:primefilter}
     A filter $F$ on an upper semi-lattice $(P,\leq,\vee)$ is \emph{prime} if, for every $p,q \in P$, $p \vee q\in F$ implies $p \in F$ or $q \in F$. A proper filter $F$ is a \emph{minimal prime} filter if it is prime and does not properly contain any non-empty prime filter. 
 
 An ideal $I$ on a lower semi-lattice $(Q,\leq,\wedge)$ is \emph{prime} if, for every $p,q \in Q$, $p \wedge q\in I$ implies $p \in I$ or $q \in I$. A proper ideal $I$ is \emph{maximal} if there exists no other proper ideal $J$ with $I$ a proper subset of $J$.
 \end{definition}
 
Families of prime filters (ideals) on an upper (lower) semi-lattice $P$ reflect in unions (intersections) the join (meet) of elements of $P$. In other words:  

\begin{fact}
\label{fact:un&intprimefilters}
Assume $P$ be an upper semi-lattice and the filters in $\aaa$ be prime. Then: \begin{align*}
    \open{p} \cup \open{q} = \open{p \vee q} 
\end{align*} for every $p,q \in P$.

Furthermore, if $P$ is also a lower semi-lattice, then: 
\begin{align*}
    \open{p} \cap \open{q} = \open{p \wedge q} 
\end{align*} for every $p,q \in P$.
\end{fact}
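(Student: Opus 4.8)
The plan is to verify each of the two set equalities by a double inclusion, keeping track of which hypothesis each inclusion consumes. Throughout I would use only that every $F \in \aaa$ is a filter, hence in particular upward closed in the order of $P$, together with primeness of the filters in $\aaa$ where indicated.

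For the join equation $\open{p} \cup \open{q} = \open{p \vee q}$, I would first treat the inclusion $\open{p} \cup \open{q} \subseteq \open{p \vee q}$, which follows from upward closure alone: if $F$ contains $p$ or $q$, then since $p \le p \vee q$ and $q \le p \vee q$, upward closure forces $p \vee q \in F$. For the reverse inclusion I would invoke primeness: if $p \vee q \in F$, then $F$ prime gives $p \in F$ or $q \in F$, that is $F \in \open{p} \cup \open{q}$. This is the only point in the whole argument where primeness is used.

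For the meet equation $\open{p} \cap \open{q} = \open{p \wedge q}$, now assuming $P$ is also a lower semi-lattice, I would argue directly from the filter axioms. The inclusion $\open{p \wedge q} \subseteq \open{p} \cap \open{q}$ again follows from upward closure, since $p \wedge q \le p$ and $p \wedge q \le q$. For the reverse inclusion, if $p, q \in F$ then, $F$ being a filter, the finite set $\bp{p,q}$ has a lower bound $r \in F$; as $p \wedge q$ is the greatest lower bound of $p$ and $q$, we have $r \le p \wedge q$, and upward closure yields $p \wedge q \in F$. Notice that primeness plays no role here.

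I do not expect any genuine obstacle: the statement is a direct unwinding of the definitions of filter, prime filter, and the basic sets $\open{p}$. The only conceptual point worth flagging is the asymmetry between the two identities — the join identity genuinely needs primeness (for the inclusion $\open{p \vee q} \subseteq \open{p} \cup \open{q}$), whereas the meet identity holds for arbitrary filters on a lower semi-lattice, independently of primeness.
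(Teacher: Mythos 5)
Your proof is correct and follows essentially the same route as the paper's (much terser) proof: the join identity needs primeness only for the inclusion $\open{p\vee q}\subseteq\open{p}\cup\open{q}$, while the meet identity follows from upward closure and closure under finite lower bounds alone. Your explicit remark on this asymmetry is a useful elaboration of what the paper states in one line.
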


\begin{proof}
The first equality follows immediately from the fact that the filters of $\aaa$ are prime, while the second one follows from the fact that filters are closed upwards and with respect to finite meets. 
\end{proof}

Clearly, a dual fact holds for lower semi-lattices.

It is important to emphasize why the hypothesis of working with   bounded distributive lattices and prime filters is not particularly restrictive.

Let us consider a $T_0$-topological space $(X, \tau)$ and  a basis $\mathcal{B}$ of open sets for $\tau$. Consider the preorder $P=(\mathcal{B}, \subseteq)$ and $\aaa_X=\bp{F_x : x \in X}$, where $F_x=\bp{ U \in \mathcal{B}: x \in U}$, then (as already remarked) $(\aaa_X, \tau^{\aaa_X}_P)$ is homeomorphic to $(X, \tau)$. If we assume $\mathcal{B}$ is closed under finite unions and intersections, with $X,\emptyset \in \mathcal{B}$, $P$ is a bounded distributive lattice.
Moreover, it is worth noting that in this context, the filters in $\aaa_X$ are always prime: if $U \cup V \in F_x$ with $U, V \in P$, then $x \in U$ or $x \in V$, implying $U \in F_x$ or $V \in F_x$.

\begin{lemma} 
\label{lemma:idealsandfilters}

Let $L$ be a  bounded distributive lattice. Then 
\begin{align*}
        -^C: \bp{J \subseteq L : J \mathrm{ \ is\ a\ proper\ prime\ ideal\ }} &\to \bp{F \subseteq L : F \mathrm{ \ is \ a\ proper\ prime\ filter\ }}\\
        J &\mapsto \bp{p \in L: p \notin J}
\end{align*}
is a bijective map whose inverse is $-^C$ itself.  
\end{lemma}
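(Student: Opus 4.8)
The plan is to show that complementation $J \mapsto L \setminus J$ is a well-defined map from proper prime ideals to proper prime filters, that it is an involution (hence bijective), and symmetrically that it sends proper prime filters to proper prime ideals. Since the two tasks are dual to one another under the lattice duality swapping $\wedge \leftrightarrow \vee$ and $0 \leftrightarrow 1$, I would prove one direction carefully and remark that the other follows by the same argument with the roles of meet and join interchanged.

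First I would verify that if $J$ is a proper prime ideal, then $F = L \setminus J$ is a filter. The point $1 \in F$ (since $J$ is proper, $1 \notin J$), so $F$ is non-empty. For upward closure: if $p \in F$ and $p \le q$, then $q \in F$, for otherwise $q \in J$ would force $p \in J$ by downward closure of the ideal, contradicting $p \in F$. For closure under finite meets I would use primeness of $J$ in its contrapositive form: if $p, q \in F$, i.e. $p \notin J$ and $q \notin J$, then since $J$ is prime, $p \wedge q \notin J$ (this is exactly the contrapositive of ``$p \wedge q \in J \implies p \in J$ or $q \in J$''), so $p \wedge q \in F$. This shows $F$ is a filter; it is proper because $0 \in J$ (ideals contain the minimum) so $0 \notin F$.

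Next I would check that $F = L \setminus J$ is prime: suppose $p \vee q \in F$, i.e. $p \vee q \notin J$. If both $p \in J$ and $q \in J$, then $p \vee q \in J$ because ideals are closed under finite joins, a contradiction; hence $p \notin J$ or $q \notin J$, that is $p \in F$ or $q \in F$. Throughout, the defining axioms I am invoking are precisely the lattice-dual counterparts: ideals are downward closed and closed under $\vee$, prime ideals satisfy the $\wedge$-primeness condition, and these translate under complementation into the filter axioms (upward closed, closed under $\wedge$) and the $\vee$-primeness of a prime filter in Definition~\ref{def:primefilter}. The involution property $(-^C)\circ(-^C) = \id$ is immediate from $L \setminus (L \setminus J) = J$, which also gives injectivity and surjectivity once the symmetric direction (prime filter $\mapsto$ prime ideal) is established by the same dual argument.

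I do not expect any serious obstacle here: the whole statement is a routine exercise in De Morgan-style dualization, and the only thing requiring a moment's care is to consistently match each ideal axiom with its complementary filter axiom, and in particular to state the primeness conditions in the correct contrapositive form so that closure under $\wedge$ for $F$ comes from primeness of $J$ and primeness of $F$ comes from closure under $\vee$ of $J$. Distributivity of $L$ is not actually needed for this particular bijection (only for finer structural results elsewhere), so I would not invoke it, and I would keep the writeup short by proving one direction and appealing to duality for the other.
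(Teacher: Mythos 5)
Your proof is correct and complete; the paper itself does not prove this lemma but simply cites \cite[Lemma 3.9]{gehrke2024topological}, and your argument is the standard one that reference supplies: complementation exchanges the ideal axioms (downward closure, closure under $\vee$, $\wedge$-primeness) with the filter axioms (upward closure, closure under $\wedge$, $\vee$-primeness), and is manifestly an involution. Your observation that distributivity is not needed for this particular bijection is also accurate.
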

For a proof see \cite[Lemma 3.9]{gehrke2024topological}.

\begin{corollary}
\label{cor:idealsandfilters}
    Let $L$ be a  bounded distributive lattice. Then  $J$ is maximal ideal of $L$ if and only if $J^C$ is a minimal prime filter.
\end{corollary}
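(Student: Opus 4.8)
The plan is to deduce this corollary directly from Lemma~\ref{lemma:idealsandfilters}, whose bijection $-^C$ between proper prime ideals and proper prime filters is involutive and, crucially, \emph{inclusion-reversing}. The whole statement is then a translation of the extremality of maximal ideals (no proper prime ideal strictly above) into the extremality of minimal prime filters (no non-empty prime filter strictly below) across this order-reversing bijection. First I would record the elementary but essential observation that for proper prime ideals $I,J$ one has $I \subseteq J$ if and only if $J^C \subseteq I^C$; this follows immediately from the definition $J^C = \bp{p \in L : p \notin J}$, since complementation reverses containment.

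Next I would argue the two implications. Suppose $J$ is a maximal ideal, so $J$ is a proper prime ideal admitting no proper prime ideal strictly containing it. By Lemma~\ref{lemma:idealsandfilters}, $J^C$ is a proper prime filter; I must show it is minimal, i.e. it properly contains no non-empty prime filter. Suppose toward a contradiction that $G \subsetneq J^C$ is a non-empty prime filter. A non-empty prime filter is automatically proper: if $G = L$ then $0 \in G$, and $0 = 0 \vee 0$ forces $0 \in G$ trivially while primeness plus the filter being closed above would make $G$ all of $L$, but then $G$ could not be strictly below the proper filter $J^C$; more cleanly, since $G \subsetneq J^C \neq L$ we have $G \neq L$, so $G$ is proper. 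Applying the inverse $-^C$ from Lemma~\ref{lemma:idealsandfilters} yields a proper prime ideal $G^C$ with $J = (J^C)^C \subsetneq G^C$ by the order-reversing property, contradicting maximality of $J$.

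For the converse, suppose $J^C$ is a minimal prime filter; then $J^C$ is a proper prime filter, so by Lemma~\ref{lemma:idealsandfilters} its complement $J = (J^C)^C$ is a proper prime ideal. If $J$ were not maximal, there would be a proper prime ideal $K$ with $J \subsetneq K$; then $K^C$ is a proper, hence non-empty, prime filter with $K^C \subsetneq J^C$ by order-reversal, contradicting minimality of $J^C$. (Here it matters that in a prime-ideal context ``proper ideal $J$ with $I \subsetneq J$'' may be taken prime, which is why the statement is clean for \emph{prime} ideals and filters.)

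The only genuine subtlety, and the step I would be most careful about, is the bookkeeping around properness versus non-emptiness: Definition~\ref{def:primefilter} phrases minimality as ``does not properly contain any non-empty prime filter,'' whereas the natural dual of maximality would speak of \emph{proper} prime filters below. I would resolve this by noting that any prime filter sitting strictly below a proper prime filter is itself proper, so ``non-empty prime filter properly contained'' and ``proper prime filter properly contained'' coincide in the relevant range, and dually on the ideal side a proper prime ideal strictly above a proper prime ideal is the only thing that can violate maximality. Once this alignment of the extremality conditions is pinned down, the proof is a formal consequence of the involutive, order-reversing bijection, with no computation beyond tracking inclusions.
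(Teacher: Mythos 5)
Your proposal is the paper's intended argument --- the paper's own proof is literally the single word ``Obvious,'' meaning exactly the deduction you carry out: the involutive, inclusion-reversing bijection of Lemma~\ref{lemma:idealsandfilters} transports maximality among proper prime ideals to minimality among prime filters. Your bookkeeping of the inclusions and of properness versus non-emptiness is correct.

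Two steps are silently assumed, however, and both stem from the fact that the statement concerns \emph{maximal ideals} while Lemma~\ref{lemma:idealsandfilters} only speaks of \emph{prime} ideals. First, in the forward direction you write ``Suppose $J$ is a maximal ideal, so $J$ is a proper prime ideal\dots''; that maximal ideals of a bounded \emph{distributive} lattice are prime is true but needs the distributivity law: if $a\wedge b\in J$ with $a,b\notin J$, maximality gives $i_1,i_2\in J$ with $i_1\vee a=i_2\vee b=1$, and expanding $(i_1\vee a)\wedge(i_2\vee b)=1$ puts $1\in J$. Without this, you cannot even invoke the lemma to say $J^C$ is a prime filter. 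Second, in the converse you reduce non-maximality of $J$ to the existence of a proper \emph{prime} ideal $K\supsetneq J$; your parenthetical acknowledges the issue but does not close it. The fix is to extend the witnessing proper ideal to a maximal one by Zorn's Lemma and then apply the first point to conclude it is prime. Both facts are standard and the paper clearly treats them as folklore, but since your writeup explicitly routes everything through the prime-ideal/prime-filter bijection, these are the two places where the argument as written does not yet follow formally from the cited lemma.
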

\begin{proof}
    Obvious.
\end{proof}

Consequently:
\begin{fact}\label{fact:idealsandfilters-1}
Let $L$ be a  bounded distributive lattice. Then $F$ is a minimal prime filter on $L$ if and only if for all $p\in F$ there is $q\not\in F$ such that $p\vee q=1_L$.
\end{fact}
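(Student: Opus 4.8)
The plan is to dualize the statement to ideals and then reduce it to the standard description of maximal ideals in a bounded lattice. Throughout I regard $F$ as a proper prime filter (this is the ambient content of the statement: we are asking which proper prime filters are minimal) and set $I=L\setminus F$. By Lemma~\ref{lemma:idealsandfilters}, $I$ is then a proper prime ideal, and by Corollary~\ref{cor:idealsandfilters}, together with the fact that $-^C$ is its own inverse, $F$ is a minimal prime filter exactly when $I$ is a maximal ideal. Under complementation the two membership relations swap, namely $p\in F\iff p\notin I$ and $q\notin F\iff q\in I$, so the displayed condition ``for all $p\in F$ there is $q\notin F$ with $p\vee q=1_L$'' becomes verbatim ``for all $p\notin I$ there is $q\in I$ with $p\vee q=1_L$''. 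It therefore suffices to prove the purely order-theoretic statement: a proper ideal $I$ of a bounded lattice is maximal if and only if for every $p\notin I$ there is $q\in I$ with $p\vee q=1_L$.

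For this I would argue both implications through the ideal $J=\{x\in L: x\le a\vee p\text{ for some }a\in I\}$ generated by $I\cup\{p\}$, which is an ideal in any lattice: it is downward closed, and from $x\le a\vee p$ and $y\le b\vee p$ one gets $x\vee y\le(a\vee b)\vee p$ with $a\vee b\in I$. If $I$ is maximal and $p\notin I$, then $J\supsetneq I$ forces $J=L$, so $1_L\in J$, i.e. $1_L\le a\vee p$ for some $a\in I$; since $1_L$ is the top this says $a\vee p=1_L$, and $q=a$ works. Conversely, if the condition holds and $J'$ is any ideal with $I\subsetneq J'$, pick $p\in J'\setminus I$ and obtain $q\in I\subseteq J'$ with $p\vee q=1_L\in J'$, whence $J'=L$; so no proper ideal strictly contains $I$ and $I$ is maximal.

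I do not expect a real obstacle: the mathematical content is the elementary maximal-ideal computation above, which does not even require distributivity, combined with the complementation correspondence already recorded in Lemma~\ref{lemma:idealsandfilters} and Corollary~\ref{cor:idealsandfilters}. The only point needing care is the bookkeeping with the involution $-^C$, so that ``minimal prime filter'' is matched with ``maximal ideal'' on the nose, and checking that properness is respected on both sides: $F$ being proper and non-empty yields an $I$ that is proper and non-empty, while the right-hand condition itself already fails for the improper filter $F=L$, since then no $q\notin F$ exists.
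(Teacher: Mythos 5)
Your proof is correct and follows essentially the same route as the paper: pass to the complementary ideal via Lemma~\ref{lemma:idealsandfilters} and Corollary~\ref{cor:idealsandfilters}, and then invoke the characterization of maximal ideals by the condition that every element outside the ideal joins with some element inside it to $1_L$. The only difference is that you spell out that maximal-ideal characterization in full (via the ideal generated by $I\cup\{p\}$), where the paper asserts it as one link in a chain of equivalences.
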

\begin{proof}
Note that $F$ is a minimal prime filter if and only if $F^C$ is a maximal prime ideal by Lemma \ref{lemma:idealsandfilters}. Therefore $p\in F$ if and only if $p\not\in F^C$ if and only if for some $q\in F^C$ $p\vee q=1_L$ if and only if for some $q\not\in F$ $p\vee q=1_L$.
\end{proof}

\subsection{Subfit lattices}
\label{subsec:subfitlattices}
We revisit the definition of subfit frame as given in \cite{picado2011frames}, and extend it to the context of bounded distributive lattices.

\begin{definition} 
\label{def:subfit}
A bounded distributive lattice $P$ \ is \emph{subfit} if for every distinct elements $a,b \in P$ there exists $c \in P$ such that $a \vee c= 1_P$ and $b \vee c \neq 1_P $ or vice versa.
\end{definition}

\begin{remark} \label{rem:subfitness}
Subfitness entails the following (seemingly stronger) property:
\begin{equation}\label{eqn:subfitness}
\text{\it{For every  $a,b \in P$ such that $a \not \leq b$ there exists $c \in P$ such that $a \vee c= 1_P$ and $b \vee c \neq 1_P $.}}
\end{equation}

To see this assume $P$ is subfit according to \ref{def:subfit}, and
let $a \nleq b$ in $P$. Then $a'=b\wedge a< a$; in particular $a'\neq a$, thus there is a $c\in P$ with
\[
(a' \vee c=1 \text{ and } a \vee c < 1) \text{ or } (a' \vee c<1 \text{ and }a \vee c=1).
\]
Since $a'< a$, the leftside disjunct cannot be the case, so
\[
a' \vee c<1 \text{ and }a \vee c=1
\]
holds.
Now it follows that
\[
b\vee c = (b\vee c)\wedge 1 = (b\vee c)\wedge(a \vee c)
= (b \wedge a)\vee c = a’ \vee c <1
\]
And \ref{eqn:subfitness} holds.
\end{remark}

Here is an informative characterization of subfitness:
\begin{fact}
\label{fact:charsubfitness}
A bounded distributive lattice $L$ is subfit if and only if for all $a\in L$:
\begin{equation}\label{eqn:subfitness2}
\uparrow a = \bigcap \bp{ F : F \text{ is a minimal prime filter and } a\in F}.
\end{equation}
\end{fact}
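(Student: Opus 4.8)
The plan is to prove the two inclusions in \eqref{eqn:subfitness2} separately, first observing that the inclusion $\uparrow a \subseteq \bigcap \bp{F : F \text{ minimal prime}, a\in F}$ holds in \emph{every} bounded distributive lattice and for \emph{every} $a$: if $a\in F$ and $F$ is a filter, then $F$ is upward closed, so $\uparrow a\subseteq F$. (When no minimal prime filter contains $a$, e.g. $a=0$, the intersection is all of $L$ and the inclusion is vacuous; note $\uparrow 0 = L$ too.) So subfitness should turn out to be equivalent to the reverse inclusion $\bigcap \bp{F : \dots}\subseteq \uparrow a$ holding for all $a$, and I would reformulate the latter as the separation statement: for all $a,b$ with $a\not\leq b$ there is a minimal prime filter $F$ with $a\in F$ and $b\notin F$.

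For the forward direction I would assume $L$ is subfit and fix $a,b$ with $a\not\leq b$. By Remark \ref{rem:subfitness} subfitness yields the stronger property \eqref{eqn:subfitness}, so I obtain $c\in L$ with $a\vee c=1_L$ and $b\vee c\neq 1_L$. Since $b\vee c\neq 1_L$, the principal ideal $\downarrow(b\vee c)$ is proper, and I would extend it by Zorn's Lemma to a maximal ideal $J$; by Corollary \ref{cor:idealsandfilters} the set $F:=J^C$ is then a minimal prime filter. Now $b\leq b\vee c\in J$ gives $b\in J$, i.e. $b\notin F$; and $a\notin J$, for otherwise $a,c\in J$ (the latter since $c\leq b\vee c$) would force $a\vee c=1_L\in J$, contradicting properness of $J$. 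Thus $a\in F$ and $b\notin F$, which is exactly the required separation, establishing the reverse inclusion.

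For the converse I would assume \eqref{eqn:subfitness2} holds for all $a$ and prove the strong form \eqref{eqn:subfitness} (equivalent to subfitness by Remark \ref{rem:subfitness}). Given $a\not\leq b$, we have $b\notin\uparrow a$, so by \eqref{eqn:subfitness2} there is a minimal prime filter $F$ with $a\in F$ and $b\notin F$. Applying Fact \ref{fact:idealsandfilters-1} to $a\in F$ produces $c\notin F$ with $a\vee c=1_L$. By Lemma \ref{lemma:idealsandfilters} the complement $F^C$ is a proper prime ideal, and both $b$ and $c$ lie in $F^C$; hence $b\vee c\in F^C$, and properness of $F^C$ gives $b\vee c\neq 1_L$. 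This is precisely \eqref{eqn:subfitness}, so $L$ is subfit.

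The two directions are structurally symmetric, pivoting on the ideal/filter duality of Lemma \ref{lemma:idealsandfilters} and Corollary \ref{cor:idealsandfilters}. The main point to get right is the \emph{direction} of the witness in subfitness: the bare definition only guarantees the separating element $c$ on one of the two sides, so it is essential to invoke the asymmetric reformulation \eqref{eqn:subfitness} of Remark \ref{rem:subfitness} in order to place $a$ on the side where $a\vee c=1_L$. The only genuinely non-elementary ingredient is the existence of a maximal ideal above a given proper ideal (a routine Zorn's Lemma argument), needed to manufacture the separating minimal prime filter in the forward direction.
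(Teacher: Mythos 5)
Your proposal is correct and follows essentially the same route as the paper's proof: both directions pivot on the strengthened form of subfitness from Remark \ref{rem:subfitness}, a Zorn's Lemma extension to a maximal ideal avoiding $b\vee c$ in the forward direction, and the maximal/prime ideal properties of $F^C$ in the converse. Your version merely makes explicit (via Corollary \ref{cor:idealsandfilters} and Fact \ref{fact:idealsandfilters-1}) some steps the paper leaves implicit, which is fine.
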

\begin{proof}
First assume $L$ is subfit: clearly $\uparrow a \subseteq \bigcap \bp{ F : F \text{ is a minimal prime filter and } a\in F}$ holds.
Now let $b\not\in\uparrow a$, i.e. $a\nleq b$. Then there is $c$ with 
$a\vee c=1$ and $b\vee c\neq 1$. Let F be a minimal prime filter with $b\vee c \not\in F$ (it exists by a basic application of Zorn's Lemma). Then $a\in F$ and $ b\not\in F$, thus the converse inclusion holds as well.

Conversely assume \ref{eqn:subfitness2} holds. Given $a\nleq b$, find (by \ref{eqn:subfitness2}) $F$ minimal prime with $a\in F$ and $b\notin F$.
Then there must be $c\in F^c$ such that $a\vee c=1$ (since $F^c$ is a maximal ideal). Since $b,c\in F^c$ we also have that $b\vee c<1$ (since $F^c$ is a prime ideal). Since $a\nleq b$ are chosen arbitrarily, $L$ is subfit.
\end{proof}

\begin{definition}
\label{def:largefamily}
Let $P$ be a preorder and $\aaa$ be an antichain of prime filters on $P$. 

We say that $\aaa$ is \emph{large} for $P$ if for every two distinct elements $p,q \in P$ there exists $F \in \aaa$ such that $p \in F$ and $q \notin F$ or vice versa. 
\end{definition}

Observe that if \(P\) has a unique minimum $0_P$, any large family of filters is dense on the partial order $P^+=P\setminus\bp{0_P}$.

\begin{fact}
\label{fact:neighborhoodfiltersarelarge}
Assume $(X,\tau)$ is a $T_0$-topological space and $P$ is a base for $\tau$.
Let $\aaa_X$ be the family of filters $F_x=\bp{p\in P:x\in p}$ for all $x \in X$.

Then $\aaa_X$ is large for $P$.
\end{fact}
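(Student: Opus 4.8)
The statement to prove is \textbf{Fact \ref{fact:neighborhoodfiltersarelarge}}: if $(X,\tau)$ is a $T_0$-space and $P$ is a base for $\tau$, then the family $\aaa_X = \bp{F_x : x \in X}$, where $F_x = \bp{p \in P : x \in p}$, is large for $P$.

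The plan is to unpack the definition of \emph{large} (Definition \ref{def:largefamily}) and verify its two requirements directly against the $T_0$ hypothesis and the fact that $P$ is a base. Being large requires two things: first, that $\aaa_X$ is an antichain of prime filters; second, the separation condition that for distinct $p, q \in P$ there is some $F_x$ containing exactly one of them.

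First I would check that each $F_x$ is genuinely a prime filter on $P$. It is upward closed (if $x \in p$ and $p \subseteq p'$ in $P$, then $x \in p'$), and it is closed under finite meets whenever those meets exist in $P$ as a base element, because $x \in p$ and $x \in q$ give $x \in p \cap q$; primeness follows exactly as in the discussion preceding Lemma \ref{lemma:idealsandfilters}: if $x \in p \vee q$ — interpreting $\vee$ as the join in the base — then $x$ lies in the union, hence in $p$ or in $q$. Next, that $\aaa_X$ is an antichain: this is where the $T_0$ hypothesis enters. If $F_x \subseteq F_y$ for distinct $x,y$, then every basic open set containing $x$ also contains $y$; combined with $T_0$ (which provides a basic open set separating $x$ and $y$ in at least one direction, since $P$ is a base) this forces a contradiction unless $F_x = F_y$, and $F_x = F_y$ would mean $x$ and $y$ have the same basic neighborhoods, again contradicting $T_0$. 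So distinct points give incomparable filters.

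Finally I would verify the largeness separation condition itself. Given distinct $p, q \in P$, since $P$ is a base and $p \neq q$ as open sets, one of them is not contained in the other, say there is a point $x \in p \setminus q$ (the symmetric case is handled by swapping $p$ and $q$). Then by definition $p \in F_x$ while $q \notin F_x$, which is exactly what largeness demands. The main obstacle — though a mild one — is being careful about the interplay between two orders: the set-inclusion order on the base $P$ versus the subset order on filters, and making sure the antichain claim really uses $T_0$ rather than just the base property. Everything else is a routine check that the neighborhood filters behave as expected, so the argument is short; the only genuine content is recognizing that $T_0$ is precisely the hypothesis that upgrades the family from merely dense to an antichain.
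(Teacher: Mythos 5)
Your verification of the actual largeness condition --- given distinct $p,q\in P$, pick a point $x$ in the symmetric difference and observe that $F_x$ contains exactly one of $p,q$ --- is precisely the paper's entire proof, so the core of your argument is correct and matches the source.

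However, your antichain step contains a genuine error: $T_0$ does \emph{not} imply that $\bp{F_x : x\in X}$ is an antichain. In the Sierpi\'{n}ski space $X=\bp{a,b}$ with open sets $\emptyset,\bp{b},X$ (which is $T_0$), taking $P=\tau$ as the base one gets $F_a=\bp{X}\subsetneq\bp{\bp{b},X}=F_b$, so two distinct neighborhood filters are comparable. The flaw is exactly where you assert that $T_0$ ``forces a contradiction'': if the separating basic open set contains $y$ but not $x$, this only witnesses $F_y\not\subseteq F_x$, which is perfectly compatible with $F_x\subsetneq F_y$; no contradiction arises. The correct equivalence is Proposition~\ref{prop:separationproperties}(\ref{prop:separationproperties-4}): the family of neighborhood filters is an antichain if and only if the space is $T_1$. (A similar caveat applies to primality: it needs $P$ to be closed under finite unions, which a bare base need not be.) The paper sidesteps all of this by proving only the separation clause of Definition~\ref{def:largefamily} and treating the ``antichain of prime filters'' phrase there as context rather than as something to be verified; your instinct to check those preconditions is a defensible reading of the definition, but they simply do not follow from $T_0$ plus ``$P$ is a base,'' so that portion of your argument cannot be repaired without strengthening the hypotheses to $T_1$ and to $P$ closed under finite unions --- which is in fact the setting in which the Fact is applied later in the paper, where the filters involved are minimal prime and hence automatically form an antichain.
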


\begin{proof}
Given $p\neq q$ in $P$, we may assume without loss of generality that there exists $x \in X$ such that $x \in p\setminus q$.
Then $p\in F_x$ and $q\not\in F_x$.
\end{proof}

\begin{proposition}
\label{prop:charsubfit}
Let $P$ be a bounded distributive lattice.
Then, $P$ admits a large  family $\mathcal{A}$ of minimal prime filters on $P$  if and only if $P$ is subfit.
\end{proposition}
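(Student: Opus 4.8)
The plan is to prove the two implications separately, leaning on the characterizations already established, in particular Fact~\ref{fact:charsubfitness} and Fact~\ref{fact:idealsandfilters-1}, so that no fresh Zorn's Lemma argument is needed.

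For the implication that a large family of minimal prime filters forces subfitness, I would argue directly from Definition~\ref{def:subfit}. Fix distinct $a,b\in P$. Largeness yields some $F\in\mathcal{A}$ separating them; assume without loss of generality $a\in F$ and $b\notin F$. Since $F$ is a minimal prime filter, Fact~\ref{fact:idealsandfilters-1} applied to $a\in F$ produces $c\notin F$ with $a\vee c=1_P$. It then remains to check $b\vee c\neq 1_P$: both $b$ and $c$ lie in $F^C$, which is a prime ideal by Lemma~\ref{lemma:idealsandfilters}, hence closed under finite joins, so $b\vee c\in F^C$; as $1_P\in F$ this forces $b\vee c\neq 1_P$. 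This is exactly the subfit condition, the symmetric case $b\in F$, $a\notin F$ producing the ``or vice versa'' clause.

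For the converse, that subfitness yields a large family, I would simply take $\mathcal{A}$ to be the family of \emph{all} minimal prime filters on $P$. First, this is an antichain in $(\pow{P},\subseteq)$: if $F\subsetneq G$ were distinct minimal prime filters, then $G$ would properly contain the non-empty prime filter $F$, contradicting the minimality of $G$. Second, largeness follows at once from Fact~\ref{fact:charsubfitness}: given distinct $p,q$, antisymmetry lets us assume $p\nleq q$, so $q\notin\uparrow p$; by \ref{eqn:subfitness2} this means $q$ fails to belong to some minimal prime filter $F$ containing $p$, and this $F$ witnesses largeness for the pair $p,q$.

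The genuinely routine parts are the antichain verification and the bookkeeping around the symmetric ``or vice versa'' clauses. The only point deserving care is that largeness separates distinct elements in an \emph{unprescribed} direction, matching the (also symmetric) subfit condition: in the first implication one must therefore start from whichever separating filter largeness provides and let Fact~\ref{fact:idealsandfilters-1} dictate which disjunct of subfitness is produced, rather than trying to control the direction in advance. I expect this alignment of symmetries to be the subtlest step, while everything else reduces to invoking the earlier facts.
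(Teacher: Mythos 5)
Your proof is correct and follows essentially the same route as the paper: the paper likewise derives the subfit-to-large direction from Fact~\ref{fact:charsubfitness} (implicitly taking $\mathcal{A}$ to be all minimal prime filters) and the converse from largeness plus Fact~\ref{fact:idealsandfilters-1}, with primality of $F^C$ giving $b\vee c\neq 1_P$. Your write-up merely makes explicit the antichain check and the prime-ideal step that the paper leaves as "straightforward."
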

\begin{proof}
The right-to-left implication follows straightforwardly from Fact \ref{fact:charsubfitness}.
For the converse implication, suppose that \( P \) admits a large family of minimal prime filters. If \( a \neq b \), without loss of generality, we may assume that there exists a minimal prime filter \( F \) such that \( a \in F \) and \( b \notin F \). Hence, by Fact \ref{fact:idealsandfilters-1}, there exists an element \( c \notin F \) such that \(a \vee c = 1_P \) and \( b \vee c \neq 1_P\).
\end{proof}

\begin{corollary}
\label{cor:subfit}
Let $P$ be a bounded distributive lattice. Then the following are equivalent:
\begin{enumerate}
\item
$P$ is subfit;
\item
For some family $\aaa$ of minimal prime filters on $P$, the map $p\mapsto \open{p}$ is an injective homomorphism of bounded lattices.
\end{enumerate}
\end{corollary}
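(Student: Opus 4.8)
The plan is to reduce everything to Proposition \ref{prop:charsubfit} by observing that, for \emph{any} family $\aaa$ of proper prime filters, the map $e\colon p\mapsto\open{p}$ is automatically a homomorphism of bounded lattices, and that the \emph{injectivity} of $e$ is nothing but the assertion that $\aaa$ is large in the sense of Definition \ref{def:largefamily}. Once these two remarks are in place, the equivalence of (1) and (2) is immediate.

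First I would check that $e$ is a bounded-lattice homomorphism whenever the members of $\aaa$ are prime and proper. Preservation of $\vee$ and $\wedge$ is exactly Fact \ref{fact:un&intprimefilters}. For the bounds, note that $\open{1_P}=\aaa$ since every filter contains $1_P$, and $\open{0_P}=\emptyset$ since each $F\in\aaa$ is proper and hence omits $0_P$; thus $e(1_P)$ and $e(0_P)$ are the top and bottom of the powerset lattice $\pow{\aaa}$. Since minimal prime filters are proper by Definition \ref{def:primefilter}, this applies to any family $\aaa$ of minimal prime filters.

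Next I would record that $e$ is injective if and only if $\aaa$ is large: by definition, $\open{p}\neq\open{q}$ for $p\neq q$ means precisely that some $F\in\aaa$ separates $p$ and $q$ (i.e. contains one but not the other), which is the largeness condition. Here one should also note that distinct minimal prime filters are incomparable (if $F\subsetneq G$ with both prime, then $G$ properly contains the non-empty prime filter $F$, contradicting minimality of $G$), so a family of minimal prime filters is automatically an antichain and largeness is well-posed as in Definition \ref{def:largefamily}. With these observations: for (1)$\Rightarrow$(2), subfitness yields via Proposition \ref{prop:charsubfit} a large family $\aaa$ of minimal prime filters, whence $e$ is an injective homomorphism of bounded lattices; for (2)$\Rightarrow$(1), injectivity of $e$ forces $\aaa$ to be large, and Proposition \ref{prop:charsubfit} then gives that $P$ is subfit. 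The argument is essentially bookkeeping; the only points requiring care are verifying that $e$ respects the bounds (which is where properness of the filters is used) and confirming the antichain property needed to invoke the definition of largeness.
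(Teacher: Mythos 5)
Your argument is correct and is precisely the fleshing-out that the paper intends: the paper's proof is literally ``Obvious,'' relying implicitly on Fact \ref{fact:un&intprimefilters} for the homomorphism property and on Proposition \ref{prop:charsubfit} for the equivalence of largeness with subfitness, which is exactly the reduction you carry out. Your two supplementary checks (that properness of the filters gives preservation of the bounds, and that minimal prime filters automatically form an antichain so largeness is well-posed) are the right details to verify and are handled correctly.
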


\begin{proof}
Obvious.
\end{proof}

Subfit lattices provide a sufficient framework for representing \( T_1 \)-topological spaces. Specifically, if we consider a base for such a space that includes the complements of singletons and forms a bounded lattice, it will be subfit. Moreover, the associated neighborhood filters form a large  family of minimal prime filters. Actually we will see that we can characterize subfits as those bounded distributive lattices that are isomorphic to a base of a compact $T_1$-space.

\subsection{Compactness}
\label{subsec:compactness}

\begin{theorem}
 (Cornish \cite{Cornish1972NormalL})
    \label{thm:compactT1} 
    Let $P$ be a bounded distributive lattice and 
    \[
    \aaa=\bp{F \subseteq P: F\mathrm{ \ is\ a\ minimal\ prime\ filter \ on \ }P}
    \]
    Then $\atpa$ is a compact $T_1$-space. 
\end{theorem}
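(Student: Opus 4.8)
The plan is to verify the two required properties separately, treating the $T_1$ axiom and compactness independently. For the $T_1$ property I would argue directly rather than through Proposition \ref{prop:separationproperties}, since the family of \emph{all} minimal prime filters need not be dense on $P\setminus\{0_P\}$ (for instance, on the three-element chain $0<a<1$ the unique minimal prime filter is $\uparrow 1$, which omits $a$, yet the resulting one-point space is still compact $T_1$). The key observation is that two distinct minimal prime filters are always $\subseteq$-incomparable: if $F\subsetneq G$ with both proper and prime, then $G$ would properly contain the non-empty prime filter $F$, contradicting the minimality of $G$. Hence $\aaa$ is an antichain, and for distinct $F,G\in\aaa$ one may pick $p\in F\setminus G$ and $q\in G\setminus F$, so that $\open p$ contains $F$ but not $G$ and $\open q$ contains $G$ but not $F$; this gives $T_1$ with no appeal to density.

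For compactness I would invoke the Alexander Subbase Lemma in its closed-set formulation applied to the subbasis $\{\closed p:p\in P\}$ for the closed sets (recall $\closed p=\aaa\setminus\open p$, and that $\{\open p:p\in P\}$ generates the topology). Thus it suffices to show that every subfamily $\{\closed{p_i}:i\in I\}$ with the finite intersection property has non-empty intersection. Using Fact \ref{fact:un&intprimefilters} in its dual form for closed sets, namely $\closed p\cap\closed q=\closed{p\vee q}$, one obtains $\bigcap_{i\in I_0}\closed{p_i}=\closed{\,\bigvee_{i\in I_0}p_i\,}$ for every finite $I_0\subseteq I$, so the whole argument reduces to understanding when a single $\closed q$ is empty.

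The crucial computation is that $\closed q\neq\emptyset$ if and only if $q\neq 1_P$. Indeed $\closed q=\{F\in\aaa:q\notin F\}$, and by Corollary \ref{cor:idealsandfilters} these $F$ are exactly the complements of the maximal ideals $J$ with $q\in J$; such a $J$ exists precisely when the principal ideal $\downarrow q$ is proper, i.e. when $q\neq 1_P$, since every proper ideal extends to a maximal one by Zorn's Lemma. Consequently the finite intersection property of $\{\closed{p_i}\}$ translates into the statement that $\bigvee_{i\in I_0}p_i\neq 1_P$ for every finite $I_0$, which is exactly the assertion that the ideal generated by $\{p_i:i\in I\}$ is proper. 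Extending this ideal to a maximal ideal $J$ and setting $F=J^C$, Corollary \ref{cor:idealsandfilters} guarantees $F\in\aaa$, while $p_i\in J$ yields $F\in\closed{p_i}$ for all $i$; hence $\bigcap_i\closed{p_i}\neq\emptyset$, as required.

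The main obstacle, and the step deserving the most care, is the compactness argument: one must set up the dual Alexander Subbase Lemma correctly, carry out the reduction of the finite intersection property to the purely lattice-theoretic condition ``$\bigvee_{I_0}p_i\neq 1_P$ for all finite $I_0$'', and invoke the extension of a proper ideal to a maximal ideal together with the correspondence of Corollary \ref{cor:idealsandfilters}. The $T_1$ part is routine once one notes the antichain property, and in particular it does not rely on the (possibly failing) density hypothesis.
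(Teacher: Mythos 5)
Your proof is correct and, modulo dualization, is essentially the paper's argument: both reduce compactness to extending a proper ideal (for you, the one generated by the indices of a subbasic closed family with the finite intersection property; for the paper, the complements of a basic open non-cover) to a maximal ideal via Zorn's Lemma, whose complement is the required minimal prime filter by Corollary \ref{cor:idealsandfilters}. Your side remark is well taken---the family of all minimal prime filters need not be dense, as your three-element chain shows---but the direction of Proposition \ref{prop:separationproperties}(\ref{prop:separationproperties-4}) actually used (antichain implies $T_1$) does not depend on density, and your appeal to the Alexander Subbase Lemma is likewise harmless but unnecessary, since by Fact \ref{fact:un&intprimefilters} the family $\bp{\open{p}:p\in P}$ is already a base.
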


We give our own proof of the theorem since it highlights ideas we will use later on to prove original results (our focus is on minimal prime filters of open sets, in \cite{Cornish1972NormalL} Cornish works with the dual notion of maximal filter of closed sets).
\begin{proof}
$\atpa$ is $T_1$
by Proposition \ref{prop:separationproperties}, since $\aaa$ is an antichain under inclusion.

    Let $\bp{\open{p}: p \in Q }$ be a family of basic open sets of $\atpa$ so that, for all finite $Q'\subseteq Q$, the subfamily $\bp{\open{p}: p \in Q' }$ does not cover $\aaa$. We need to show that $Q$ is not a cover of $\aaa$. That is, there is a minimal prime filter $G$ of $P$ so that $G\not\in\open{p}$, or equivalently $p\not\in G$, for all $p \in Q$.  By Corollary \ref{cor:idealsandfilters}, this is equivalent to showing the existence of a maximal ideal $G^C$ that contains $Q$. To produce the latter we use Zorn's Lemma.  
Let
\begin{equation*}
    \mathcal{Z}=\bp{Z\subseteq L: Q\subseteq Z \ \mathrm{ and } \  \bigvee Z' \neq 1_P \mathrm{\ for\ every\ finite\ } Z'\subseteq  Z}
\end{equation*}
ordered by the inclusion.

\begin{description}[font=\normalfont\itshape]
 \item [$\mathcal{Z} \neq\emptyset$.] If $Q' \subset Q$ is finite, then $Q'$ is not a cover of $\aaa$, that is, $\bigvee Q'\neq 1_P$. Therefore, every finite join of elements from $Q$ is different from $1_P$. Hence, $Q \in \mathcal{Z}$.

    \item [Every chain of $\mathcal{Z}$ admits an upper bound.] Assume that $\bp{Z_\alpha: \alpha \in A} \subseteq \mathcal{Z}$ is a chain. $V=\bigcup_{\alpha \in A} Z_\alpha $ is in $\mathcal{Z}$: if $z_1, \dots, z_n \in V$ then there exists $\beta \in A$ such that $z_1, \dots, z_n \in Z_\beta$, thus $\bigvee_{i=1}^n z_i \neq 1_P$.
\end{description}

By Zorn's Lemma, there exists a maximal element $J$ of $\mathcal{Z}$. An argument left to the reader shows that $J$ is a maximal ideal of $P$ which clearly contains $Q$. 
\end{proof}

\subsection{Normal lattices versus compact Hausdorff spaces}
\label{subsec:normallattices}

Recalling the notion of normal frame given in \cite{picado2011frames}, we generalize it to  bounded distributive lattices.

\begin{definition} 
\label{def:normallattice}
A preorder $P$ \ is a \emph{normal lattice} if the following conditions are met:
\begin{enumerate}
    \item \label{def:normallattice1}
$P$ is a bounded distributive lattice; 
    \item \label{def:normallattice2}
for every distinct elements $p,q \in P$  such that $p \vee q=1_P$, there exist $r,s \in P$ such that $r \wedge s=0_P$ and $r\vee p=s \vee q=1_P$.
\end{enumerate} 
\end{definition}

\begin{theorem}
\label{thm:compactT2}

     Let $P$ be a normal lattice and 
     \[
     \aaa=\bp{F \subseteq P: F\mathrm{ \ is\ a\ minimal\ prime\ filter }}
     \]
     Then $\atpa$ is a compact Hausdorff space. 
\end{theorem}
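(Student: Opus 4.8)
The plan is to reduce Hausdorffness to the characterization already established in Proposition \ref{prop:separationproperties}(6), and invoke compactness from Theorem \ref{thm:compactT1}. Compactness is immediate: since a normal lattice is by definition a bounded distributive lattice, Theorem \ref{thm:compactT1} applies directly and tells us that $\atpa$ is a compact $T_1$-space, where $\aaa$ is the family of all minimal prime filters. So the entire content lies in verifying the Hausdorff separation axiom.

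By Proposition \ref{prop:separationproperties}(6), $\atpa$ is Hausdorff if and only if for every pair of distinct minimal prime filters $F, G \in \aaa$, the union $F \cup G$ fails to be a prefilter; equivalently, there exist $p \in F$ and $q \in G$ with $p \perp q$. So I would fix distinct minimal prime filters $F \neq G$ and produce such a witnessing pair. First I would extract, from $F \neq G$, an element separating them: without loss of generality there is some $a$ with $a \in F$ and $a \notin G$. The key leverage is Fact \ref{fact:idealsandfilters-1}, which says that since $F$ is a minimal prime filter and $a \in F$, there exists $b \notin F$ with $a \vee b = 1_P$. The natural next move is to play this against $G$: since $G$ is prime and $a \vee b = 1_P \in G$, either $a \in G$ or $b \in G$; as $a \notin G$, we must have $b \in G$.

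Now I have $a \in F$, $b \in G$, and $a \vee b = 1_P$ with $a \neq b$ (indeed $a \in F$ but $a \notin G$ while $b \in G$, so they differ). This is exactly the hypothesis triggering normality (Definition \ref{def:normallattice}\ref{def:normallattice2}): there exist $r, s \in P$ with $r \wedge s = 0_P$ and $r \vee a = 1_P$, $s \vee b = 1_P$. The plan is to show $r \in G$ and $s \in F$, and that $r \perp s$, giving the required incompatible pair (with $s \in F$ and $r \in G$). Since $r \wedge s = 0_P$, any common lower bound of $r$ and $s$ lies below $0_P$, hence equals $0_P$, so indeed $r \perp s$ in the lattice. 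It remains to place $r$ in $G$ and $s$ in $F$.

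For membership, I would argue as follows. Since $r \vee a = 1_P \in G$ and $G$ is prime, either $r \in G$ or $a \in G$; as $a \notin G$, we get $r \in G$. For $s \in F$, I would use that $F$ is a \emph{minimal} prime filter, so $F^C$ is a maximal (prime) ideal by Corollary \ref{cor:idealsandfilters}; if $s \notin F$ then $s \in F^C$, and since $s \vee b = 1_P$ with $b \notin F$ so $b \in F^C$, primeness of the ideal $F^C$ would force $s \vee b \in F^C$, contradicting $s \vee b = 1_P \notin F^C$ (as $1_P \in F$). Hence $s \in F$. With $s \in F$, $r \in G$, and $r \perp s$, Proposition \ref{prop:separationproperties}(6) yields that $\atpa$ is Hausdorff. \textbf{The main obstacle} I anticipate is pinning down the membership claims cleanly — in particular ensuring $s \in F$, which genuinely uses minimality of $F$ (not just primeness) via the maximal-ideal dual; a naive argument using only that $F$ is prime would not suffice, so the delicate point is invoking Corollary \ref{cor:idealsandfilters} at the right moment.
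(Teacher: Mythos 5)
Your proof is correct and follows essentially the same route as the paper: compactness from Theorem \ref{thm:compactT1}, then Proposition \ref{prop:separationproperties}(6) together with Fact \ref{fact:idealsandfilters-1} and normality to produce the incompatible pair $s\in F$, $r\in G$. One small remark: your closing comment misplaces where minimality is used --- it is needed only to produce $b\notin F$ with $a\vee b=1_P$, whereas $s\in F$ follows from mere primeness of $F$ (from $s\vee b=1_P\in F$ and $b\notin F$), so the detour through the maximal ideal $F^C$ is unnecessary.
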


\begin{proof}
    We only need to prove that $\atpa$ is Hausdorff. By Proposition \ref{prop:separationproperties}, this is equivalent to proving that the union of two minimal prime filters $F,G$ is not a prefilter.

    Without loss of generality, we can suppose there exists an element $p \in F$ such that $p \notin G$. Since $F$ is a minimal prime filter, $F^C$ is a maximal ideal of $P$, by Corollary \ref{cor:idealsandfilters}. Hence, there exists some $q$ such that $p \vee q=1_P$ and  $q \in F^C$, i.e. $q \notin F$. Since $P$ is a normal lattice, there exist some $r,s \in P$ such that \begin{align*}
        p \vee r=1_P && q \vee s=1_P && r \wedge s=0_P
    \end{align*}
    Clearly, $1_P \in F$ and $q \notin F$ imply that $s \in F$. Similarly, $1_p \in G$ and $p \notin G$ imply that $r \in G$. Hence, $r,s \in F \cup G$ and, consequently, $F \cup G$ is not a prefilter. 
\end{proof}

A similar result  can be found in a different form in \cite[Thm. 7.3]{Cornish1972NormalL}. However, our result is more general, as it does not require the assumption of a semi-complemented lattice.

\subsection{The Wallman compactification operator}
\label{subsec:wallman-frinkcompactification}

We provide an overview of Frink's article \cite{frink}, focusing on his introduction of a characterization for Tychonoff spaces and relating it to the results of the previous sections. 

\begin{definition}
\label{def:weakhausdorffcompactification}
Let $(X,\tau)$ be a topological space. A \emph{weak Hausdorff compactification} of $(X,\tau)$ is a triple $(\iota,Y,\sigma)$ such that:

\begin{itemize}
\item
$(Y,\sigma)$ is a compact Hausdorff space;

\item
$\iota:X\to Y$ is a continuous map; 

\item $ \iota[X]$ is dense in $Y$.
\end{itemize}
$(\iota,Y,\sigma)$ is a \emph{Hausdorff compactification} if furthermore $\iota$ is a topological embedding onto its image.
\end{definition}

\begin{definition}(Frink \cite{frink})
\label{def:normalbasis}
    Let $(X, \tau)$ be a topological space. Let $Z$ be a family of closed sets for the topology $\tau$. We say that $Z$ is: 
\begin{enumerate}[(a)]\label{def:normalbasis-weak-normal}
        
        \item \emph{weak normal} when it is a bounded distributive lattice w.r.t. finite union and intersection, and any two disjoint members $A$ and $B$ of $Z$ are subsets respectively of disjoint complements $C',D'$ of members of $Z$, i.e. $A \subseteq C'$, $B \subseteq D'$ and $C' \cap D'=\emptyset$.

\item \label{def:normalbasis-disjunctive} \emph{disjunctive} when for every $x \in X$ and for every closed set $F$ such that $x \notin F$, there exists $A \in Z$ such that $x \in A$ and $A \cap F= \emptyset$.
 \end{enumerate}

\begin{itemize}\label{def:normalbasis-weaknormalbase}
\item
A \emph{weak normal base} is a weak normal family of closed sets which is a base for the closed sets.
\item \label{def:normalbasis-weakseminormal}
$(X, \tau)$  is \emph{weak semi-normal} if it has at least one weak normal basis.

\item\label{def:normalbasis-normalbase}
A \emph{normal base} is a disjunctive and weak normal base for the closed sets.
\item \label{def:normalbasis-seminormal}
$(X, \tau)$  is \emph{semi-normal} if it has at least one normal basis. 
\end{itemize}
\end{definition}

Frink showed the following:
\begin{theorem}(Frink \cite{frink})
\label{thm:tychonoffiffseminormal}
   A $T_1$-space $(X, \tau)$ is Tychonoff if and only if it is semi-normal.  
\end{theorem}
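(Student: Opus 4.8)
The plan is to prove both directions of Frink's theorem, establishing that a $T_1$-space $(X,\tau)$ is Tychonoff if and only if it admits a normal (i.e.\ disjunctive and weak normal) base for the closed sets. The natural strategy is to link semi-normality to the existence of a Hausdorff compactification, since Tychonoff spaces are precisely those that embed in a compact Hausdorff space, and then to produce that compactification via the Wallman-type operator developed in the previous sections.

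For the direction assuming $(X,\tau)$ is Tychonoff, I would first recall that by definition a Tychonoff $T_1$-space embeds as a dense subspace of a compact Hausdorff space, or equivalently admits a separating family of continuous $[0,1]$-valued functions. The plan is to take the family $Z$ of \emph{zero-sets} $f^{-1}(0)$ of continuous real-valued functions $f$ on $X$ and verify that $Z$ is a normal base for the closed sets. That $Z$ is closed under finite unions and intersections and contains $X,\emptyset$ is routine (using $f^2+g^2$ and products). Disjunctiveness follows immediately from complete regularity: given $x\notin F$ with $F$ closed, Urysohn-type separation yields an $f$ with $f(x)=0$ and $f\equiv 1$ on $F$, so the zero-set $f^{-1}(0)$ contains $x$ and misses $F$. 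The weak normality condition—that disjoint members are contained in disjoint co-zero sets—is the classical fact that disjoint zero-sets in any Tychonoff space are completely separated, separated by a continuous function taking value $0$ on one and $1$ on the other.

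For the converse, assuming $(X,\tau)$ has a normal base $Z$, I would pass to the bounded distributive lattice $P=(Z,\cup,\cap)$ and apply the machinery of Section~\ref{sec:filtersonboundeddistributivelattices}. The key observation is that the weak normality of $Z$ translates precisely into $P^{\mathrm{op}}$ (or the complemented lattice of co-zero complements) being a \emph{normal lattice} in the sense of Definition~\ref{def:normallattice}, so that by Theorem~\ref{thm:compactT2} the space $\atpa$ of minimal prime filters is compact Hausdorff. Using Fact~\ref{fact:omeomspacespreorders} and the disjunctiveness of $Z$, which guarantees that the neighbourhood filters $F_x$ separate points from closed sets, I would show that the natural map $x\mapsto F_x$ embeds $(X,\tau)$ homeomorphically onto a dense subspace of this compact Hausdorff space, exhibiting a Hausdorff compactification and hence witnessing that $X$ is Tychonoff.

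The main obstacle I anticipate is the precise bookkeeping in translating between closed-set bases and the open-set filter language of the earlier sections: the lattice of closed sets and the lattice of open sets are dual, so the \emph{minimal prime filters} of one correspond to \emph{maximal prime ideals} (equivalently maximal filters of closed sets) of the other, and one must track this duality carefully when invoking Theorem~\ref{thm:compactT2}. A second delicate point is verifying that the embedding $x\mapsto F_x$ is a topological \emph{embedding} rather than merely a continuous dense injection; here disjunctiveness is exactly the hypothesis needed to recover the subspace topology, and I would make sure to use it to show that $\{A\in Z: x\in A\}$ determines the neighbourhood filter of $x$ faithfully. The completely-separated characterization of disjoint zero-sets in the forward direction is a standard but nontrivial fact that I would either cite or reprove via a short explicit construction of the separating function.
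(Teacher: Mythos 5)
Your proposal follows essentially the same route as the paper: the forward direction takes the zero-sets as the normal base, and the converse builds a Hausdorff compactification from the normal base via the space of minimal prime filters (equivalently, the $Z$-ultrafilter Wallman construction of Theorem~\ref{thm:W(Z,X)iscompactification}, using Proposition~\ref{prop:conditionfornormalbasis} and Theorem~\ref{thm:compactT2} exactly as you describe). The only small point to add in the forward direction is to check explicitly that the zero-sets form a \emph{base for the closed sets} (for closed $F$ and $x\notin F$ take $f\equiv 0$ on $F$ with $f(x)=1$), which is a separate condition from the disjunctiveness you do verify.
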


\begin{corollary}(Frink \cite{frink})
\label{cor:normal&Frink}
    A $T_1$-space is normal Hausdorff if and only if its closed sets form a normal base.
\end{corollary}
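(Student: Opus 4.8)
The plan is to prove Corollary~\ref{cor:normal&Frink} by chaining together Frink's characterization of Tychonoff spaces (Theorem~\ref{thm:tychonoffiffseminormal}) with standard facts about normal Hausdorff spaces. The key is to analyze the distinction between a \emph{weak normal base} and a \emph{normal base}: the latter additionally requires the disjunctive property (Definition~\ref{def:normalbasis}\ref{def:normalbasis-disjunctive}). So the target biconditional splits naturally into two implications.

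For the forward direction, suppose $(X,\tau)$ is $T_1$ and normal Hausdorff. I would take $Z$ to be the family of \emph{all} closed sets of $\tau$; this is trivially a base for the closed sets and is a bounded distributive lattice under finite unions and intersections. The weak normality of $Z$ is then exactly a restatement of the topological normality of $X$: given disjoint closed $A,B$, normality yields disjoint open sets $U \supseteq A$, $V\supseteq B$, and then $C'=U$, $D'=V$ are complements of the closed members $X\setminus U$, $X\setminus V$ of $Z$, with $A\subseteq C'$, $B\subseteq D'$ and $C'\cap D'=\emptyset$. For the disjunctive property, given $x\notin F$ with $F$ closed, since $X$ is $T_1$ the singleton $\{x\}$ is closed and disjoint from $F$, so taking $A=\{x\}\in Z$ gives $x\in A$ with $A\cap F=\emptyset$. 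Hence the full collection of closed sets is a normal base.

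For the converse, suppose the closed sets of $(X,\tau)$ form a normal base $Z$ (i.e. $Z$ being all closed sets is weak normal and disjunctive). Weak normality of the full lattice of closed sets directly gives topological normality of $X$ by reversing the translation above (disjoint closed $A,B$ sit inside disjoint open $C',D'$). To get Hausdorff, I would use that $X$ is $T_1$ and normal: $T_1$ makes points closed, and normality separates the disjoint closed singletons $\{x\},\{y\}$ for $x\neq y$ by disjoint opens, yielding $T_2$. It is worth checking whether one genuinely needs the full strength of ``normal base'' here or whether weak normality of all closed sets already suffices once $T_1$ is assumed; I expect the disjunctive clause to be automatic when $Z$ is literally all closed sets in a $T_1$-space (by the singleton argument above), so the content of the corollary is really that \emph{the lattice of all closed sets is weak normal iff $X$ is normal}.

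The main obstacle, and the point requiring the most care, is matching the lattice-theoretic definition of weak normality (Definition~\ref{def:normalbasis}\ref{def:normalbasis-weak-normal}), phrased in terms of \emph{complements of members of $Z$}, against the topological definition of normality phrased in terms of disjoint \emph{open} sets. When $Z$ is the family of all closed sets these two notions coincide cleanly because complements of closed sets are precisely the open sets; but one must be careful that the corollary is about $Z$ being a base, and that the cleanest proof is obtained by exhibiting the specific base $Z=\{\text{closed sets}\}$ rather than reasoning about an arbitrary normal base. Alternatively, one could deduce the corollary more slickly from Theorem~\ref{thm:tychonoffiffseminormal} together with the classical fact that a $T_1$-space is normal Hausdorff iff it is Tychonoff and its closed sets separate as above, but the direct argument with the full closed-set lattice is the most transparent and self-contained route.
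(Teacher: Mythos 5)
Your proof is correct and complete. The paper itself does not supply a proof of Corollary~\ref{cor:normal&Frink}: it is attributed to Frink and left as an (immediate) consequence of the definitions, with the surrounding text moving on to the Wallman construction. Your direct verification is exactly the argument one would expect: when $Z$ is the family of \emph{all} closed sets, the complements of members of $Z$ are precisely the open sets, so the weak normality clause of Definition~\ref{def:normalbasis} is a literal restatement of topological normality, and the disjunctive clause is automatic from $T_1$ via the closed singleton $\{x\}$. Your closing observation — that the real content is ``the lattice of all closed sets is weak normal iff $X$ is normal,'' with Hausdorffness following from normality plus $T_1$ — is an accurate diagnosis of why the paper treats this as a corollary not needing separate argument. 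The only caveat worth recording is that your proposed alternative route through Theorem~\ref{thm:tychonoffiffseminormal} would be a detour (Tychonoff is strictly weaker than normal for $T_1$-spaces, so one cannot recover the corollary from that theorem alone); you rightly discard it in favour of the direct argument.
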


We prove it giving a small improvement which outlines that (weak) normal bases produce (weak) Hausdorff compactifications. Again this has the added benefit of introducing ideas and lemmas that we will need in the final part containing our original results.

\begin{definition}(Frink \cite{frink})
\label{def:Zultrafilters}
    Let $Z$ be a weak normal family for a topological space $(X, \tau)$. A \emph{$Z$-ultrafilter} is a proper subset of $Z$ such that:
    \begin{enumerate}
        \item it contains every superset in $Z$ of each of its members; 
        \item it is closed under finite intersections; 
        \item it is maximal in $Z$.
    \end{enumerate}
\end{definition} 

We can define the weak Hausdorff compactification of any topological space $(X,\tau)$ induced by a weak normal family $Z$ on $X$. 

\begin{definition}(Frink \cite{frink})
\label{def:wallman-Frinkcompactif}
    Let $Z$ be a weak normal base for a $T_0$-topological space $(X,\tau)$. The \emph{weak Wallman compactification of $X$ relative to $Z$} is 
\[
\wall{Z}{X}=\bp{\mathcal{F} \subseteq Z: \mathcal{F}  \mathrm{ \ is\ a\ } Z\mathrm{-ultrafilter\ }} 
\] 
endowed with the topology $\wall{Z}{\tau}$ generated by 
\[ 
\wall{Z}{U}=\bp{\mathcal{F} \in \wall{Z}{X}: \mathrm{ \ there\ is\ some\ }A \in \mathcal{F}\mathrm{\ with\ }A \subseteq U}
\] 
    for every $U$ such that $X \setminus U \in Z$.
\end{definition}

\begin{theorem}(Frink \cite{frink})
\label{thm:W(Z,X)iscompactification}
    Let $Z$ be a (weak) normal base for a space $(X,\tau)$. Then, $(\wall{Z}{X},\wall{Z}{\tau})$ is a (weak) Hausdorff compactification of $(X,\tau)$ via the map:
\begin{align*}
    i : X \to \wall{Z}{X}  && x \mapsto i(x)=\bp{C  \in Z: x \in C}
\end{align*}
\end{theorem}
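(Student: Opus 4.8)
The plan is to reduce the statement to the lattice machinery of Section \ref{sec:filtersonboundeddistributivelattices} by passing to the dual lattice of open complements. Set $P=\bp{X\setminus C:C\in Z}$. Since $Z$ is a base for the closed sets that is a bounded distributive lattice under $\cup,\cap$ with $\emptyset,X\in Z$, its complement $P$ is a base for $\tau$ and a bounded distributive lattice under $\cup,\cap$ with $\emptyset,X\in P$. The key observation is that a $Z$-ultrafilter (Definition \ref{def:Zultrafilters}) is precisely a maximal proper filter on the lattice $Z$, and complementation inside $X$ turns such a filter into a maximal ideal of $P$, which by Corollary \ref{cor:idealsandfilters} is the complement in $P$ of a minimal prime filter. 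Composing these, I would assign to each $Z$-ultrafilter $\mathcal F$ the minimal prime filter
\[
H_{\mathcal F}=\bp{U\in P: X\setminus U\notin\mathcal F}
\]
on $P$, and check that this is a bijection between $\wall{Z}{X}$ and the family $\aaa$ of all minimal prime filters on $P$.

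First I would show this bijection is a homeomorphism onto $\atpa$, the crucial point being the matching of subbasic opens $\wall{Z}{U}\leftrightarrow\open{U}$ for $U\in P$. Writing $C=X\setminus U\in Z$, one has $\mathcal F\in\wall{Z}{U}$ iff there is $A\in\mathcal F$ with $A\subseteq U$, i.e. $A\cap C=\emptyset$, and I claim this holds iff $C\notin\mathcal F$, i.e. iff $U\in H_{\mathcal F}$. The forward direction is immediate (a proper filter cannot contain $\emptyset$), while the converse uses maximality of $\mathcal F$: if $C\notin\mathcal F$ then the filter generated by $\mathcal F\cup\bp{C}$ is improper, which forces some $A\in\mathcal F$ with $A\cap C=\emptyset$. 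Granting the homeomorphism, compactness and $T_1$-ness of $(\wall{Z}{X},\wall{Z}{\tau})$ follow from Theorem \ref{thm:compactT1}. For Hausdorffness I would observe that weak normality of $Z$ is literally the normality of $P$ in the sense of Definition \ref{def:normallattice}: given $p,q\in P$ with $p\vee q=1_P$, writing $p=X\setminus A$, $q=X\setminus B$ yields disjoint $A,B\in Z$, and the disjoint complements $C',D'\in P$ supplied by weak normality are exactly the witnesses $r,s$ required. Hence Theorem \ref{thm:compactT2} applies and the space is compact Hausdorff in both the weak and the non-weak cases.

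It remains to analyse the map $i$. Under the homeomorphism, $i$ corresponds to $j:X\to\aaa$, $x\mapsto G_x=\bp{U\in P:x\in U}$, which is always a prime filter on $P$. I would first treat the disjunctive (normal base) case. Here disjunctivity is exactly what guarantees that $G_x$ is a \emph{minimal} prime filter: for $U\in G_x$, writing $C=X\setminus U\in Z$ one has $x\notin C$, so disjunctivity produces $A\in Z$ with $x\in A$ and $A\cap C=\emptyset$, whence $V=X\setminus A\notin G_x$ and $U\vee V=1_P$; by Fact \ref{fact:idealsandfilters-1} this makes $G_x$ minimal prime, so $i(x)=\bp{C\in Z:x\in C}$ is a genuine $Z$-ultrafilter. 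Continuity is then immediate, since $j^{-1}[\open{U}]=\bp{x:U\in G_x}=U$ is open; injectivity follows from $T_0$-ness together with $Z$ being a base for the closed sets (separating $x\neq y$ by a member of $Z$); openness onto the image follows from $j[U]=\open{U}\cap j[X]$; and density follows because any nonempty $\wall{Z}{U}$ is witnessed by a nonempty $A\in Z$ with $A\subseteq U$, so any $x\in A$ gives $i(x)\in\wall{Z}{U}$. Together these yield a topological embedding with dense image.

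The main obstacle is the weak case, where disjunctivity fails. Then $F_x=\bp{C\in Z:x\in C}$ is still a prime filter but need not be maximal (as happens already for the Sierpi\'nski space with $Z$ its lattice of closed sets), so $i(x)$ as literally written is not a $Z$-ultrafilter; one must instead send $x$ to some $Z$-ultrafilter extending $F_x$, which exists by Zorn's Lemma. Density is unaffected by this choice, since the witnessing $A$ still lies in any extension. The delicate point, and the reason one obtains only a weak Hausdorff compactification, is continuity: the computation $j^{-1}[\open{U}]=U$ used disjunctivity, and for a general extension the preimage is merely contained in $U$. I expect this to be the step requiring the most care, and I would address it either by choosing the extensions coherently across $X$ or by first collapsing $X$ along the relation $F_x=F_y$ so that continuity is preserved while injectivity is allowed to fail.
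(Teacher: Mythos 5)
Your reduction is the same one the paper itself relies on: pass to the lattice $P$ of open complements of $Z$, identify $Z$-ultrafilters with maximal ideals of $P$ and hence (via Lemma \ref{lemma:idealsandfilters} and Corollary \ref{cor:idealsandfilters}) with minimal prime filters, match the subbasic opens exactly as in Theorem \ref{thm:wallmancompactification-lattice}, and invoke Theorems \ref{thm:compactT1} and \ref{thm:compactT2} together with the equivalence ``$Z$ weak normal iff $P$ normal'' of Proposition \ref{prop:conditionfornormalbasis}. Your treatment of the disjunctive (normal base) case is complete and correct; in particular you correctly isolate disjunctivity as exactly the condition making the dual neighbourhood filter $G_x$ minimal prime via Fact \ref{fact:idealsandfilters-1}, which is what makes $i(x)$ a genuine $Z$-ultrafilter and the computation $j^{-1}[\open{U}]=U$ valid.

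The weak case, however, contains a genuine gap. You rightly observe that without disjunctivity $i(x)=\bp{C\in Z:x\in C}$ need not be maximal (the Sierpi\'nski example is apt), but your repair --- extend to an arbitrary $Z$-ultrafilter $e(x)$ by Zorn --- does not yield continuity, and you explicitly stop short of proving it. Indeed $e(x)$ may contain some $A$ with $x\notin A$ and $A\subseteq U$, placing $x$ in $e^{-1}[\wall{Z}{U}]$ for reasons invisible to any neighbourhood of $x$, so nearby points have no cause to land in $\wall{Z}{U}$. The missing ingredient is that weak normality singles out a \emph{canonical} extension: send $x$ to the $Z$-ultrafilter dual to the minimal prime filter $\bp{U\in P: x\in U \text{ and } U\cup V=X \text{ for some } V\in P \text{ with } x\notin V}$. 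This is precisely the map $f_i$ of Lemma \ref{lemma:morphism-norm-subfit} applied to the inclusion of $P$ into $\tau$, which is the route the paper takes in Proposition \ref{prop:stonecechaasfiltersofcozerosets} for the cozero-set lattice; the verifications that this is a filter, that it is prime, and that the resulting map is continuous all use normality of $P$, i.e.\ weak normality of $Z$. Neither ``choosing the extensions coherently'' (left unspecified) nor collapsing $X$ along $F_x=F_y$ supplies this step --- the latter changes the domain without producing the required continuous map from $X$ itself. With this canonical choice substituted for the Zorn extension, the remainder of your argument, density included, goes through unchanged.
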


Note that a base for the closed sets of $\wall{Z}{X}$ is $\bp{\wall{Z}{A}: A \in Z}$ where \begin{equation*}
    \wall{Z}{A}=\bp{\mathcal{F} \in \wall{Z}{X}: A \in \mathcal{F}}
\end{equation*}  

It can be proved that the $0$-sets of a Tychonoff space form a normal basis whose relative Wallman compactification is clearly the Stone--\v{C}ech compactification.

In general, in any topological space the $0$-sets form a weak normal family, however if the family is not a base for the closed sets, the compact Hausdorff space it produces retains very little information from the starting one. 
For example, if the reals are endowed with the Zariski topology, the $0$-sets are given by polynomial maps  
with compact range, any such map must be constant. In particular, the unique non-empty $0$-set is the set of all reals itself. So its Wallman compactification relative to the family of $0$-sets is the one point space.

  Now, we aim to determine which conditions on a preorder $P$ and on a large family $\aaa$ of filters on $P$ ensure that  $\spa=\bp{\closed{p}: p \in P}$ forms a normal basis for $\atpa$, enabling us to construct $\wall{\spa}{\aaa}$. 
  
\begin{proposition}
\label{prop:conditionfornormalbasis} 
    Let $P$ be a bounded distributive lattice and $\aaa$ be a large family of prime filters on $P$. Then $\spa=\bp{\closed{p}: p \in P}$ is a normal basis for $\atpa$ if and only if $P$ is a normal lattice and the elements of $\aaa$ are minimal prime filters of $P$. 
\end{proposition}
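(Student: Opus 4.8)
The plan is to prove both implications by unwinding the definition of a normal base (Definition \ref{def:normalbasis}), which requires two things of the family $\spa = \bp{\closed{p} : p \in P}$: that it be \emph{weak normal} (a bounded distributive sublattice of the closed sets, with the disjoint-separation property) and that it be \emph{disjunctive}, and additionally that it be a base for the closed sets. I would first record the translation dictionary between the lattice operations on $P$ and the set operations on the $\closed{p}$'s. Since $\aaa$ is large, the map $p \mapsto \open{p}$ (equivalently $p \mapsto \closed{p}$) is injective, and by Fact \ref{fact:un&intprimefilters} it is a lattice homomorphism precisely when the filters in $\aaa$ are prime: $\closed{p} \cup \closed{q} = \closed{p \wedge q}$ and $\closed{p} \cap \closed{q} = \closed{p \vee q}$. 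Thus $\spa$ is always a bounded distributive sublattice of the closed sets of $\atpa$ under the standing hypotheses, and $\closed{p} \cap \closed{q} = \emptyset$ iff $p \vee q = 1_P$.

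For the left-to-right direction, I would assume $\spa$ is a normal base and derive both that $P$ is a normal lattice and that the elements of $\aaa$ are minimal prime. For normality of $P$: take distinct $p, q$ with $p \vee q = 1_P$, so $\closed{p} \cap \closed{q} = \emptyset$; weak normality supplies disjoint complements $C', D'$ of members of $\spa$, say $C' = \open{r}$ and $D' = \open{s}$ (complements of $\closed{r}, \closed{s}$), with $\closed{p} \subseteq \open{r}$, $\closed{q} \subseteq \open{s}$, and $\open{r} \cap \open{s} = \emptyset$. Translating through the dictionary, $\open{r} \cap \open{s} = \open{r \wedge s} = \emptyset$ forces $r \wedge s = 0_P$ (using that $\aaa$ is large, hence $\open{\cdot}$ is injective and $\open{0_P} = \emptyset$), while $\closed{p} \subseteq \open{r}$ means $\closed{p} \cap \closed{r} = \emptyset$, i.e. $p \vee r = 1_P$, and similarly $q \vee s = 1_P$; this is exactly condition \ref{def:normallattice2}. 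For minimality of the filters in $\aaa$: here I expect to use disjunctivity. Given $F \in \aaa$ and $p \in F$, I want to produce $q \notin F$ with $p \vee q = 1_P$ (this is the criterion of Fact \ref{fact:idealsandfilters-1}). Since $p \in F$ means $F \notin \closed{p}$, and $\closed{p}$ is closed, disjunctivity applied to the point $F$ and the closed set $\closed{p}$ yields some $\closed{q} \in \spa$ with $F \in \closed{q}$ and $\closed{q} \cap \closed{p} = \emptyset$; then $q \notin F$ and $p \vee q = 1_P$, so $F$ is minimal prime by Fact \ref{fact:idealsandfilters-1}.

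For the right-to-left direction, I would assume $P$ is a normal lattice and every $F \in \aaa$ is a minimal prime filter, and verify the three requirements. That $\spa$ is a bounded distributive sublattice of the closed sets is already in hand from the dictionary. Weak normality: given disjoint $\closed{p}, \closed{q}$, i.e. $p \vee q = 1_P$, normality of $P$ gives $r, s$ with $r \wedge s = 0_P$ and $p \vee r = q \vee s = 1_P$; the complements $\open{r}, \open{s}$ of $\closed{r}, \closed{s}$ then satisfy $\open{r} \cap \open{s} = \open{r \wedge s} = \emptyset$, $\closed{p} \subseteq \open{r}$, and $\closed{q} \subseteq \open{s}$, as required. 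Disjunctivity: given $F \in \aaa$ and a closed set $\closed{p}$ (it suffices to check basic closed sets) with $F \notin \closed{p}$, i.e. $p \in F$, minimality of $F$ via Fact \ref{fact:idealsandfilters-1} yields $q \notin F$ with $p \vee q = 1_P$; then $F \in \closed{q}$ and $\closed{q} \cap \closed{p} = \closed{p \vee q} = \emptyset$, giving the separating member of $\spa$. Finally, $\spa$ is a base for the closed sets by construction, since it is a subbase (Proposition \ref{prop:separationproperties}(2) makes $\bp{\open{p}}$ a base for the open sets) and it is closed under finite intersection by the dictionary, hence an actual base for the closed sets.

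I expect the main obstacle to be the correct handling of \emph{disjunctivity}, because it is the condition that genuinely forces minimality (rather than mere primeness) of the filters, and getting the quantifiers straight — disjunctivity is a statement about \emph{all} points $F$ and \emph{all} closed sets, but it suffices to test it on basic closed sets $\closed{p}$ — is where care is needed. The normality-equivalence half is more mechanical, resting almost entirely on the translation $\open{r} \cap \open{s} = \open{r \wedge s}$ together with the injectivity furnished by largeness; the one subtlety there is ensuring that largeness really does guarantee $\open{s} = \emptyset \iff s = 0_P$, which follows because $\open{\cdot}$ is an injective lattice homomorphism sending $0_P$ to $\emptyset$.
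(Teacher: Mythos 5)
Your proposal is correct and follows essentially the same route as the paper: it decomposes the normal-base condition into the translation dictionary $\closed{p}\cap\closed{q}=\closed{p\vee q}$, $\closed{p}\cup\closed{q}=\closed{p\wedge q}$, then matches weak normality with normality of $P$ and disjunctivity with minimality of the filters via Fact~\ref{fact:idealsandfilters-1}. You are in fact slightly more explicit than the paper on two points it leaves tacit --- that largeness is what gives $\open{t}=\aaa\iff t=1_P$ and $\open{t}=\emptyset\iff t=0_P$, and that disjunctivity need only be tested on the basic closed sets $\closed{p}$ since $\spa$ is closed under finite intersections --- but these are refinements, not a different argument.
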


\begin{proof}
\emph{}

\begin{description}[font=\normalfont\itshape] 
\item[$\spa$ is a bounded distributive lattice.] It follows from Fact \ref{fact:un&intprimefilters}. 
        \item[$\spa$ is a weak normal basis if and only if $P$ is a normal lattice.]
Suppose $\spa$ is weak normal and let $p$ and $q$  be elements of $P$ such that $ p \vee q =1_P$, i.e. $\open{p} \cup \open{q} =\aaa$. Therefore, $\closed{p} \cap \closed{q}=\emptyset$, and, thus, they are respectively subsets of  $\open{r}$ and $\open{s}$ such that $\open{r} \cap \open{s} = \emptyset$. Therefore, $r \wedge s=0_P$. Moreover, since $\closed{p}= \aaa \setminus \open{p} \subseteq \open{r}$ then $\open{p} \cup \open{r}= \aaa$, i.e. $p \vee r=1_P$. Analogously, $q \vee s=1_P$. 

The converse implication is similar.

     \item[$\spa$ is disjunctive if and only if the filters of $\aaa$ are minimal prime.] 
     Suppose $ F \in \aaa$ and $p \in F$, i.e. $F \notin \closed{p}$. Observe that requiring the existence of $\closed{q} \in \spa$ such that $F \in \closed{q}$ and $\emptyset=\closed{p} \cap \closed{q}$ is equivalent to requiring the existence of $q \notin F$ such that $p \vee q=1_P$, which, as we have already observed, is equivalent to requiring $F$ being minimal prime (by Fact \ref{fact:idealsandfilters-1}). 
    \end{description}
\end{proof}

\smallskip

The following is the key connection between the results of the previous sections and the Wallman construction:
\begin{theorem}
\label{thm:wallmancompactification-lattice}
Let $P$ be a subfit lattice and $\aaa$ be a large  family of minimal prime filters on $P$. Then, $(\wall{\spa}{\aaa},\wall{\spa}{\tau^\aaa_P} )$ is homeomorphic to $(\aaa_P, \tau^{\aaa_P}_P)$ where $\aaa_P$ is the family of \emph{all} the minimal prime filters on $P$.
\end{theorem}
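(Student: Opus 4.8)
The plan is to show that the Wallman compactification relative to $\spa$ depends, up to homeomorphism, only on the bounded distributive lattice $\spa$, that this lattice is anti-isomorphic to $P$, and that the $\spa$-ultrafilters therefore correspond to the maximal ideals of $P$, i.e.\ (by complementation) to the minimal prime filters of $P$.

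First I would record that $\spa=\bp{\closed{p}:p\in P}$ is a base for the closed sets of $\atpa$---the sets $\open{p}$ form an open base by Proposition~\ref{prop:separationproperties}(\ref{prop:separationproperties-2})---and that, since the filters of $\aaa$ are prime, Fact~\ref{fact:un&intprimefilters} yields $\closed{p}\cap\closed{q}=\closed{p\vee q}$ and $\closed{p}\cup\closed{q}=\closed{p\wedge q}$, with $\closed{1_P}=\emptyset$ and $\closed{0_P}=\aaa$. Hence $\spa$ is a bounded distributive lattice under $\cup,\cap$, and the construction of Definition~\ref{def:wallman-Frinkcompactif} applies (it uses only that $\spa$ is a base for the closed sets closed under finite intersections), so $\wall{\spa}{\aaa}$ is the set of $\spa$-ultrafilters, i.e.\ the maximal proper filters of the lattice $\spa$. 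Moreover $p\mapsto\closed{p}$ is surjective by definition and injective because $\aaa$ is large (Corollary~\ref{cor:subfit}); together with the displayed identities this makes it an anti-isomorphism of bounded distributive lattices from $P$ onto $\spa$ (sending $\vee$ to $\cap$, $\wedge$ to $\cup$, and exchanging $0_P,1_P$). In particular $q\le p$ iff $\closed{p}\subseteq\closed{q}$.

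Next I would transport $\spa$-ultrafilters across this anti-isomorphism. Given a $\spa$-ultrafilter $\mathcal{F}$, set $I_{\mathcal{F}}=\bp{p\in P:\closed{p}\in\mathcal{F}}$. The anti-isomorphism turns the filter $\mathcal{F}$ into a maximal proper ideal $I_{\mathcal{F}}$ of $P$ (upward closure of $\mathcal{F}$ becomes downward closure, closure under $\cap$ becomes closure under $\vee$, properness becomes $1_P\notin I_{\mathcal{F}}$, and maximality is preserved). By Corollary~\ref{cor:idealsandfilters} the complement $G_{\mathcal{F}}=\bp{p:\closed{p}\notin\mathcal{F}}$ is a minimal prime filter, so $G_{\mathcal{F}}\in\aaa_P$. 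Conversely, for $G\in\aaa_P$ the set $G^C$ is a maximal ideal (Corollary~\ref{cor:idealsandfilters}) and $\bp{\closed{p}:p\in G^C}$ is a $\spa$-ultrafilter; I would check directly, using the displayed lattice identities and the equivalence $q\le p\Leftrightarrow\closed{p}\subseteq\closed{q}$, that these two assignments are mutually inverse. This yields a bijection $\phi:\wall{\spa}{\aaa}\to\aaa_P$ with $\phi(\mathcal{F})=G_{\mathcal{F}}$.

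Finally I would match the topologies. By the remark following Theorem~\ref{thm:W(Z,X)iscompactification}, the sets $\wall{\spa}{\closed{p}}=\bp{\mathcal{F}\in\wall{\spa}{\aaa}:\closed{p}\in\mathcal{F}}$ form a base for the closed sets of the Wallman space, while the sets $\mathcal{C}^{\mathcal{A}_P}_p=\bp{G\in\aaa_P:p\notin G}$ form a base for the closed sets of $(\aaa_P,\tau^{\aaa_P}_P)$ (their complements $\mathcal{N}^{\mathcal{A}_P}_p$ being an open base by Proposition~\ref{prop:separationproperties}(\ref{prop:separationproperties-2})). For each $p$,
\[
\phi\qp{\wall{\spa}{\closed{p}}}=\bp{G_{\mathcal{F}}:\closed{p}\in\mathcal{F}}=\bp{G\in\aaa_P:p\notin G}=\mathcal{C}^{\mathcal{A}_P}_p,
\]
so $\phi$ carries one closed base bijectively onto the other and is therefore a homeomorphism. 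The main point requiring care is the order-reversal bookkeeping of the third paragraph---checking that the two assignments between $\spa$-ultrafilters and maximal ideals are well defined and mutually inverse; everything else follows directly from the anti-isomorphism $p\mapsto\closed{p}$ and the maximal-ideal/minimal-prime-filter duality of Corollary~\ref{cor:idealsandfilters}.
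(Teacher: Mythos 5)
Your proposal is correct and follows essentially the same route as the paper: both establish the bijection $\mathcal{F}\mapsto I_{\mathcal{F}}^C$ between $\spa$-ultrafilters and minimal prime filters via the maximal-ideal correspondence of Corollary~\ref{cor:idealsandfilters}, and then match the topologies basewise. The only cosmetic difference is that you verify the homeomorphism on the closed base $\wall{\spa}{\closed{p}}$ whereas the paper computes $\phi[\wall{\spa}{\open{p}}]=\mathcal{N}^{\aaa_P}_p$ directly on the open base; both computations are equivalent.
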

 
\begin{proof}
Let 
\begin{align*}
    \phi: (\wall{\spa}{\aaa},\wall{\spa}{\tau^\aaa_P} ) &\to (\aaa_P, \tau^{\aaa_P}_P)\\
    \mathcal{F}=\bp{\closed{p}}_{p \in _{I_\mathcal{F}}}&\mapsto I_\mathcal{F}^C
\end{align*} 
\begin{description}[font=\normalfont\itshape]
        \item[$\phi$ is well defined and bijective.]
        We claim that $\mathcal{F}=\bp{\closed{p}}_{p \in I_{\mathcal{F}}}$ is a $\spa$-ultrafilter if and only if $I_{\mathcal{F}}$ is a maximal ideal on $P$. 
        
     It is evident that $\open{q} \subseteq \open{p}$ if and only if $\closed{p}\subseteq \closed{q}$, thus $I_{\mathcal{F}}$ is closed downwards with respect to $\leq$ if and only if $\mathcal{F}$ is closed upwards with respect to inclusion. Note that 
\[
\closed{p} \cap \closed{q} = \closed{p \vee q}
\]
holds (since the filters in $\mathcal{A}$ are prime): therefore $\mathcal{F}$ is closed under finite intersections if and only if $I_{\mathcal{F}}$ is closed under finite joins. 

Additionally, $\emptyset \in \mathcal{F}$ if and only if $1_P \in I_{\mathcal{F}}$.

Clearly, $\mathcal{F}$ is maximal in $\spa$ if and only if $I_{\mathcal{F}}$ is maximal in $P$.

 Thus, by Corollary \ref{cor:idealsandfilters}, we can conclude that $\phi$ is well defined and bijective.  
 
        \item[$\phi$ is  open and continuous.]
We show that 
\[
\phi[\wall{\spa}{\open{p}}]
=\mathcal{N}^{\aaa_P}_p.
\]

Recall that for $\mathcal{F}$ in $\spa$
\[
I_{\mathcal{F}}=\bp{q: \closed{q}\in\mathcal{F}}
\]
is a maximal ideal and $\phi(\mathcal{F})=I_\mathcal{F}^C$ is a minimal prime filter.

Now $\mathcal{F} \in \wall{\spa}{\open{p}}$ if and only if there is
$q \in P$ such that $\closed{q}\subseteq\open{p}$ and $\closed{q} \in \mathcal{F}$, i.e. if and only if there is $q\in I_\mathcal{F}$ and $q\vee p=1_P$, i.e. if and only if $p \in \phi(\mathcal{F})$, i.e. if and only if
$\mathcal{F} \in \mathcal{N}^{\aaa_P}_p$.

\end{description}
\end{proof}
\begin{notation}\label{not:WP}
    Let $P$ be a subfit lattice. We denote the space $(\aaa_P, \tau^{\aaa_P}_P)$, where $\aaa_P$ is the family of minimal prime filters on $P$, with $\corn{P}$.

\end{notation}

The previous results, along with Proposition \ref{prop:charsubfit}, essentially suggest that a Tychonoff space can be represented as the space generated by a large family of minimal prime filters on a subfit normal lattice. Furthermore, the relative Wallman compactification  of the space corresponds to the space formed by considering \emph{all} minimal prime filters on that lattice.

\subsection{Compact lattices}
\label{subsec:compactlattices}

Following the same pattern for the notion of subfitness, we generalize the notion of compact frame of \cite{picado2011frames} to  bounded distributive lattices.

\begin{definition}
\label{def:compactlattices} 
Let $P$ be a bounded distributive lattice.
$P$ is \emph{compact} if,  
whenever $A\subseteq P$ is such that $\sup(A)=1_P$, there exists a finite subset $B$ of $A$ such that $\sup(B)=1_P$.
 \end{definition}

\begin{fact}
\label{fact:completecomplact}
Assume $(X,\tau)$ is a compact topological space. 
Then $\tau$ is a complete, compact  lattice. Furthermore if $(X,\tau)$ is $T_1$, $\tau$ is subfit.
\end{fact}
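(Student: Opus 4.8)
The plan is to verify the three assertions---lattice structure with completeness, lattice-compactness, and (under $T_1$) subfitness---directly from the topological hypotheses, translating each lattice notion into its meaning inside $\tau$ ordered by $\subseteq$. First I would fix the operations on $\tau$: the join of a family of open sets is their union (again open), the meet of two open sets is their intersection (again open), the bottom is $\emptyset$ and the top is $1_P=X$. Finite distributivity is inherited from the distributivity of $\cap$ over $\cup$ on $\pow{X}$, so $\tau$ is a bounded distributive lattice. For completeness I would observe that arbitrary unions of open sets are open, hence every subfamily of $\tau$ has a supremum in $\tau$; a poset in which all suprema exist is automatically a complete lattice, the infimum of a family $\bp{U_i}$ being $\Int{\bigcap_i U_i}$. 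The only point to keep in mind is that infinite meets in $\tau$ are interiors of intersections rather than intersections themselves.

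Next, for lattice-compactness (Definition \ref{def:compactlattices}) I would simply unwind the definition: if $A\subseteq\tau$ satisfies $\sup(A)=1_P$, then $\bigcup A=X$, i.e. $A$ is an open cover of $X$. Topological compactness of $X$ yields a finite $B\subseteq A$ with $\bigcup B=X$, that is $\sup(B)=1_P$, which is exactly the required conclusion. Thus lattice-compactness of $\tau$ is a direct restatement of topological compactness of $X$.

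Finally, for subfitness under the $T_1$ hypothesis, I would use the equivalent formulation \ref{eqn:subfitness} from Remark \ref{rem:subfitness}: it suffices to show that for open $U\not\subseteq V$ there is an open $W$ with $U\cup W=X$ and $V\cup W\neq X$. Since $U\not\subseteq V$, choose $x\in U\setminus V$. As $X$ is $T_1$, the singleton $\bp{x}$ is closed, so $W=X\setminus\bp{x}$ is open. Then $x\in U$ forces $U\cup W=X$, while $x\notin V$ together with $x\notin W$ gives $x\notin V\cup W$, hence $V\cup W\neq X$. This $W$ witnesses \ref{eqn:subfitness}, and so $\tau$ is subfit.

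I do not expect any genuine obstacle here: the statement is essentially a dictionary between the frame $\tau$ and the space $X$. The only mild subtlety is the one noted in the first paragraph---that the lattice meet in $\tau$ is the interior of the set-theoretic intersection---but since both the compactness clause and the subfitness argument are phrased purely in terms of joins and the top element, this point never actually enters the computation.
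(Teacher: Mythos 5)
Your proof is correct and complete; the paper states this Fact without proof, treating it as routine, and your argument (unions as joins giving completeness, open covers translating lattice-compactness into topological compactness, and $X\setminus\bp{x}$ witnessing condition \ref{eqn:subfitness} under $T_1$) is exactly the intended verification. The one subtlety you flag---that infinite meets in $\tau$ are interiors of intersections---is handled correctly, and your reduction of subfitness to \ref{eqn:subfitness} is valid since that condition clearly implies Definition \ref{def:subfit}.
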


Note that a base for a compact space which is a bounded distributive lattice need not be a compact lattice; the key point is that suprema in the base, seen as a lattice, may be computed differently than in the topology. For example:

\begin{fact}
\label{fact:completenotcompact}
Let $\bool{B}$ be an infinite complete boolean algebra. Then $\bool{B}$ is complete, subfit and not compact.

\end{fact}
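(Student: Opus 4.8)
The plan is to treat the three claims separately. Completeness holds by hypothesis, so the content lies in subfitness (which is soft, coming from the presence of complements) and non-compactness (which is where infiniteness together with completeness is genuinely used). For subfitness I would check Definition~\ref{def:subfit} directly: given distinct $a,b\in\bool B$ we may assume $a\not\le b$, and then $c=\neg a$ works, since $a\vee c=1_{\bool B}$ while $b\vee\neg a=1_{\bool B}$ would be equivalent to $a\wedge\neg b=0_{\bool B}$, i.e.\ to $a\le b$, which fails; hence $b\vee c\neq 1_{\bool B}$. (This incidentally shows that every non-degenerate Boolean algebra is subfit, with no infiniteness needed.)

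For non-compactness the plan is to exhibit, against Definition~\ref{def:compactlattices}, a family $A\subseteq\bool B$ with $\sup A=1_{\bool B}$ none of whose finite subfamilies has supremum $1_{\bool B}$. First I would extract, from the infiniteness of $\bool B$, a sequence $\bp{d_n:n\in\omega}$ of pairwise disjoint nonzero elements, i.e.\ $d_m\wedge d_n=0_{\bool B}$ for $m\neq n$. Invoking completeness, set $s=\sup\bp{d_n:n\in\omega}$ and put $A=\bp{\neg s}\cup\bp{d_n:n\in\omega}$, so that $\sup A=\neg s\vee s=1_{\bool B}$. Given a finite $B\subseteq A$, its supremum is dominated by $\neg s\vee(d_{n_1}\vee\dots\vee d_{n_k})$ for the finitely many indices with $d_{n_i}\in B$; since $\neg s\vee x=1_{\bool B}$ is equivalent to $s\le x$, reaching $1_{\bool B}$ would force $s\le d_{n_1}\vee\dots\vee d_{n_k}$. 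But picking any index $m\notin\bp{n_1,\dots,n_k}$ yields a nonzero $d_m\le s$ with $d_m\wedge(d_{n_1}\vee\dots\vee d_{n_k})=0_{\bool B}$, so the finite join cannot dominate $s$. Hence $\sup B\neq 1_{\bool B}$, and $\bool B$ is not compact.

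The step requiring real care --- and the main obstacle --- is producing the infinite pairwise disjoint family, the remaining computations being routine Boolean arithmetic. I would construct it by recursion using relative algebras: call $b\in\bool B$ \emph{large} if $\bool B\rest{b}=\bp{x:x\le b}$ is infinite, and note $1_{\bool B}$ is large. If $b$ is large there is $x$ with $0_{\bool B}<x<b$ (otherwise $\bool B\rest{b}=\bp{0_{\bool B},b}$ would be finite); setting $y=b\wedge\neg x$ gives $b=x\vee y$, $x\wedge y=0_{\bool B}$, and $\bool B\rest{b}\cong(\bool B\rest{x})\times(\bool B\rest{y})$, so at least one factor is infinite. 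Recursively splitting $b_0=1_{\bool B}$ as $b_n=d_n\vee b_{n+1}$ with $d_n\neq 0_{\bool B}$, $d_n\wedge b_{n+1}=0_{\bool B}$ and $b_{n+1}$ large, I obtain the $d_n$: since $d_m\le b_m\le b_{n+1}$ for $m>n$ while $d_n\wedge b_{n+1}=0_{\bool B}$, they are pairwise disjoint and nonzero, as needed. (Alternatively one may simply cite the standard fact that every infinite Boolean algebra contains an infinite set of pairwise disjoint nonzero elements.) Combining the three parts yields that every infinite complete Boolean algebra is complete, subfit and non-compact.
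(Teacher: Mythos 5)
Your proof is correct, and while it follows the same two-part decomposition as the paper (subfitness plus non-compactness), both halves are argued by genuinely different means. For subfitness the paper invokes Stone representation: $\bool{B}$ is isomorphic to the clopen algebra of $\St(\bool{B})$, hence a base witnessing subfitness via the large family of ultrafilters (which are exactly the minimal prime filters of a Boolean algebra); your direct computation with $c=\neg a$ is more elementary, purely equational, and makes explicit that \emph{every} nondegenerate Boolean algebra is subfit, with no representation theorem needed. For non-compactness the paper picks a maximal antichain $\{b_i : i\in I\}$, using maximality to get $\bigvee_{i\in I} b_i = 1_{\bool{B}}$ and infinitude to keep finite joins below $1_{\bool{B}}$; you instead take an arbitrary infinite pairwise disjoint family $\{d_n\}$ and manually repair the supremum by adjoining $\neg s$ where $s=\sup_n d_n$, which achieves the same effect without appealing to maximality. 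Notably, the paper's parenthetical ``such an antichain always exists \dots by a basic application of Zorn's Lemma'' is slightly too quick: Zorn yields a maximal antichain, but not automatically an \emph{infinite} one (e.g.\ $\{1_{\bool{B}}\}$ is maximal), so one still needs the lemma that an infinite Boolean algebra contains an infinite disjoint family --- and that is precisely what your recursion on ``large'' elements supplies in full detail. In short: the paper's route is shorter and thematically tied to the Stone-duality machinery developed elsewhere in the text, while yours is self-contained, avoids Zorn's Lemma (trading it for a recursive construction), and fills in the one step the paper leaves implicit.
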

\begin{proof}
Note that $\bool{B}$ is isomorphic to the the clopen set of its Stone space $\St(\bool{B})$, therefore it is a subfit lattice (and can be seen as a base for a compact Hausdorff space).
Pick a maximal antichain $\bp{b_i:i\in I}$ of $\bool{B}$ (such an antichain always exists if $\bool{B}$ is infinite, by a basic application of Zorn's Lemma).
Then 
\[
1_\bool{B}=\bigvee\bp{b_i:i\in I}
\] 
(since $\bp{b_i:i\in I}$  is a maximal antichain),
while for all finite subset $J$ of $I$ 
\[
1_\bool{B}>\bigvee \bp{b_i:i\in J}.
\]
\end{proof}

\begin{fact}
\label{fact:compactnotcomplete}
Let \( \mathcal{B} \) be the base for the interval \([0,1]\) obtained by closing under finite intersections and unions the set
\[
\bp{(a,b) \cap [0,1]: a,b \in \mathbb{Q}} 
\]
Then \( \mathcal{B} \) is a compact, subfit, but not complete, lattice.

\end{fact}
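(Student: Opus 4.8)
The plan is to establish the three claimed properties of $\mathcal{B}$ separately: compactness, subfitness, and failure of completeness. First I would make precise what $\mathcal{B}$ is. The generating set consists of the rational open intervals intersected with $[0,1]$, and closing under finite unions and intersections yields exactly the finite unions of relatively open subintervals of $[0,1]$ with rational (or $0$, $1$) endpoints. It is a bounded distributive lattice with $0_\mathcal{B}=\emptyset$ and $1_\mathcal{B}=[0,1]$, and $\mathcal{B}$ is visibly a base for the euclidean topology on $[0,1]$.

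For subfitness, the cleanest route is to invoke Corollary~\ref{cor:subfit} (or Proposition~\ref{prop:charsubfit}) together with the fact that $\mathcal{B}$ is a base for the $T_1$-space $([0,1],\tau_{\mathrm{eucl}})$. Indeed by Fact~\ref{fact:neighborhoodfiltersarelarge} the neighborhood filters $F_x=\{p\in\mathcal{B}:x\in p\}$ for $x\in[0,1]$ form a large family, and since $[0,1]$ is $T_1$ these are minimal prime filters (each $F_x$ is prime because $\mathcal{B}$ consists of open sets and $F_x$ encodes membership of the point $x$; minimality follows because in a $T_1$-space the neighborhood filter of a point on a base of opens is minimal prime — this is the content of the earlier development). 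Having produced a large family of minimal prime filters, Proposition~\ref{prop:charsubfit} gives subfitness directly.

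For compactness I would argue with Definition~\ref{def:compactlattices}. Suppose $A\subseteq\mathcal{B}$ with $\sup(A)=1_\mathcal{B}=[0,1]$, where the supremum is taken \emph{in the lattice $\mathcal{B}$}. Here the key subtlety is that the lattice join in $\mathcal{B}$ of a finite family is the set-theoretic union, so the lattice supremum of a (possibly infinite) subset $A$ being $1_\mathcal{B}$ forces $[0,1]$ to be the least upper bound in $\mathcal{B}$; I would show this already implies $\bigcup A=[0,1]$ as sets. The point is that any element $p\in\mathcal{B}$ that is an upper bound for $A$ must contain $\bigcup A$, and $[0,1]$ being the \emph{least} such upper bound means no proper element of $\mathcal{B}$ contains $\bigcup A$. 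Since $[0,1]$ is topologically compact and each element of $A$ is relatively open, $\bigcup A=[0,1]$ yields a finite subcover $p_1,\dots,p_n\in A$ with $\bigcup_{i}p_i=[0,1]$, and this finite union is the lattice join, so $\sup\{p_1,\dots,p_n\}=1_\mathcal{B}$. This is where I expect the main obstacle: one must be careful that the lattice supremum in $\mathcal{B}$ coincides with the topological union whenever it equals $1_\mathcal{B}$ — unlike the situation in Fact~\ref{fact:completenotcompact}, where suprema in a base can differ from the ambient topological supremum. The cleanest way to handle this is to verify that if $\bigcup A\subsetneq[0,1]$ then some element of $\mathcal{B}$ strictly below $[0,1]$ is still an upper bound of $A$, contradicting $\sup(A)=1_\mathcal{B}$; this uses that the complement of $\bigcup A$, if nonempty and avoiding a full neighborhood, can be separated by a rational-endpoint interval.

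For the failure of completeness, I would exhibit a subset of $\mathcal{B}$ with no supremum in $\mathcal{B}$. The natural choice is a family of rational-endpoint intervals whose union is an interval with an \emph{irrational} endpoint, for instance $A=\{(a,b)\cap[0,1]: a,b\in\mathbb{Q},\ b<1/\sqrt2\}$ or more simply the increasing chain $\{[0,q): q\in\mathbb{Q},\ q<1/\sqrt2\}$ of half-open rational-endpoint sets. Its union as a set is $[0,1/\sqrt2)$, which is \emph{not} an element of $\mathcal{B}$ since $1/\sqrt2$ is irrational. I would then show no element of $\mathcal{B}$ serves as a least upper bound: any $p\in\mathcal{B}$ containing every member of $A$ must contain $[0,1/\sqrt2)$, hence contains a relatively open set extending strictly past $1/\sqrt2$ (because elements of $\mathcal{B}$ are finite unions of rational-endpoint relatively open intervals, and such a set cannot have $1/\sqrt2$ as a genuine boundary from the right), so one can always find a strictly smaller rational-endpoint upper bound — there is no least one. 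This contradicts the existence of $\sup(A)$ in $\mathcal{B}$, so $\mathcal{B}$ is not complete.
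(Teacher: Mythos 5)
The paper states this Fact without proof, so there is no argument of the authors to compare yours against; I can only assess your proposal on its own terms. Your treatment of subfitness and of the failure of completeness is essentially correct: the family $\bp{F_x : x\in[0,1]}$ is large by Fact~\ref{fact:neighborhoodfiltersarelarge} and consists of minimal prime filters, so Proposition~\ref{prop:charsubfit} gives subfitness, and your chain $\bp{[0,q): q\in\mathbb{Q},\ q<1/\sqrt{2}}$ indeed has no least upper bound in $\mathcal{B}$. One caveat: minimality of $F_x$ on the base $\mathcal{B}$ is \emph{not} a consequence of the $T_1$ property alone (it fails, for instance, for the analogous rational-interval base of $\mathbb{R}$); it is the topological compactness of $[0,1]$ that makes it work, via the argument in the proof of Proposition~\ref{prop:unit}.

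The compactness step, however, breaks down exactly at the point you flag as the main obstacle. You claim that if $\bigcup A\subsetneq[0,1]$ then some $p\in\mathcal{B}$ with $p\neq[0,1]$ is still an upper bound of $A$. This is false. Take
\[
A=\bp{[0,q)\cup(q',1] : q,q'\in\mathbb{Q},\ q<1/\sqrt{2}<q'}\subseteq\mathcal{B}.
\]
Then $\bigcup A=[0,1]\setminus\bp{1/\sqrt{2}}$, and the only upper bound of $A$ in $\mathcal{B}$ is $[0,1]$: an element of $\mathcal{B}$ containing $[0,1]\setminus\bp{1/\sqrt{2}}$ and different from $[0,1]$ would have to equal $[0,1]\setminus\bp{1/\sqrt{2}}$, whose connected components have the irrational endpoint $1/\sqrt{2}$, so it is not a finite union of rational-endpoint intervals. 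Hence $\sup_{\mathcal{B}}(A)=1_{\mathcal{B}}$, while $\sup(B)=\bigcup B$ omits a whole interval around $1/\sqrt{2}$ for every finite $B\subseteq A$. So not only does your reduction to a topological finite subcover fail; read literally against Definition~\ref{def:compactlattices}, with suprema computed in $\mathcal{B}$, this family $A$ witnesses that $\mathcal{B}$ is \emph{not} a compact lattice. Your argument proves lattice-compactness only under the interpretation that ``$\sup(A)=1_P$'' means ``$\bigcup A=[0,1]$'', i.e., the supremum taken in the ambient topology rather than in $\mathcal{B}$; with the definition as stated, the compactness claim of the Fact appears to require either that alternative reading or a different example, and this should be raised with the authors rather than papered over.
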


\begin{remark}
We highlight that the Wallman compactification provides a canonical way to associate to \emph{any} bounded lattice a  compact and complete lattice. Let \(P\) be an arbitrary bounded lattice. By Theorem \ref{thm:compactT1}, $\corn{P}$ is always compact and hence \(\tau_P\) is a compact complete lattice. Clearly, there exists a bounded lattice homomorphism \[h: P \to \tau_P\] given by \(h(p)=\opencorn{P}{p}.\) However, in general this is not an embedding, even if \(P\) is already complete and compact. For example, if \(P\) is the topology of the Sierpinski space, then its Wallman compactification is the space with a single point. Hence, in this case, the map \(h\) sends every non-minimum element of \(P\) to the maximum of \(\tau_P\).

Indeed, as observed in Corollary \ref{cor:subfit}, the key property that guarantees \(h\) to be an embedding is the subfitness of  \(P\).
\end{remark}



\subsubsection{On compact frames, minimal prime filters are completely prime}
Compactness is a key property of complete lattices which allows to relate (for these lattices) Cornish's notion of point (i.e. a minimal prime filter) to Isbell's notion of point (i.e. a completely prime filter). 

We begin with the following:
\begin{lemma}
\label{lemma:minprimearecompprime}
    Let \(P\) be a complete and compact lattice. Then every minimal prime filter on \(P\) is completely prime. 
\end{lemma}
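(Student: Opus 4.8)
The plan is to argue by contraposition, passing to the complementary maximal ideal $F^C$ and using compactness to collapse an infinite join into a finite one. The two facts I would rely on are Corollary \ref{cor:idealsandfilters}, which tells me that $F$ is a minimal prime filter exactly when $F^C$ is a maximal (hence proper and prime) ideal---so that $F^C$ is downward closed, closed under finite joins, and omits $1_P$---and Fact \ref{fact:idealsandfilters-1}, which says that for a minimal prime $F$ every $p \in F$ admits a ``complement'' $q \notin F$ with $p \vee q = 1_P$. Since $P$ is complete, every family has a supremum, so the goal is: whenever $\bigvee_{i \in I} a_i \in F$, some $a_i$ already lies in $F$.

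Concretely, I would fix $\{a_i : i \in I\}$ with $a := \bigvee_{i \in I} a_i \in F$ and suppose for contradiction that $a_i \in F^C$ for every $i$. Applying Fact \ref{fact:idealsandfilters-1} to $a \in F$ produces $b \notin F$ with $a \vee b = 1_P$, that is,
\[
\sup\left(\{a_i : i \in I\} \cup \{b\}\right) = a \vee b = 1_P.
\]
Now compactness (Definition \ref{def:compactlattices}) lets me extract a finite subfamily still joining to $1_P$; absorbing $b$ if needed, I obtain indices $i_1, \dots, i_n$ with $a_{i_1} \vee \dots \vee a_{i_n} \vee b = 1_P$. Setting $c = a_{i_1} \vee \dots \vee a_{i_n}$, each $a_{i_j}$ and $b$ lie in the ideal $F^C$, so $c \vee b \in F^C$ by closure under finite joins; but $c \vee b = 1_P \notin F^C$, the desired contradiction. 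Hence some $a_i \in F$, and $F$ is completely prime.

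The only real content, and thus the step I would watch most carefully, is the interplay of the two finiteness ingredients and the order in which they are deployed: minimality of $F$ (via Fact \ref{fact:idealsandfilters-1}) is what manufactures the witness $b$ needed to promote the supremum $a$ to the unit $1_P$, and only then does compactness apply to reduce the infinite cover of $1_P$ to a finite one. Neither hypothesis alone would suffice---mere primeness would not yield $b$, and without $b$ the family $\{a_i\}$ need not cover $1_P$ at all---so the whole argument is essentially the observation that these two hypotheses fit together. The remaining checks (that $F^C$ is a proper ideal closed under finite joins) are immediate from Corollary \ref{cor:idealsandfilters} and the definition of a prime filter, and require no computation.
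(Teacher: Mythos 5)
Your argument is correct and is essentially the paper's own proof: both use minimality of $F$ (via the complementary maximal ideal / Fact \ref{fact:idealsandfilters-1}) to produce the witness $q\notin F$ with $q\vee\bigvee_i p_i=1_P$, then compactness to pass to a finite join, then primality to conclude. The only cosmetic difference is that you finish by deriving a contradiction inside the ideal $F^C$, while the paper applies primality of $F$ directly to the finite join; these are the same step in dual language.
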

\begin{proof}
Let \(F\) be a minimal prime filter on \(P\). Suppose \(\bp{p_i}_{i \in I} \) be a family of elements of \(P\) such that\[ \bigvee_{i \in I} p_i \in F.\]
Since \(F\) is minimal, there exists \(q \notin F\) such that
 \[
 q \vee \bigvee_{i \in I} p_i =1_P.
 \]
 Since $P$ is compact, there exist $i_1 \dots i_n \in I$ such that 
 \[
 q \vee \bigvee_{i = 1}^n p_i=1_P
 \] 
and hence, by the primality of \(F\), there exists \(j \in \bp{1,\dots,n}\) such that
 \[
p_{i_j} \in F.
\]
\end{proof}

We can now give a nice characterization of compact $T_1$-spaces.

\begin{proposition}  \label{lem:primecomplprime}
Assume $(X,\tau)$ is $T_1$. The following are equivalent:
\begin{enumerate}
\item \label{lem:primecomplprime-1}
$(X,\tau)$ is compact;
\item \label{lem:primecomplprime-3}
 every minimal prime filter on $\tau$ is completely prime.
\end{enumerate}
\end{proposition}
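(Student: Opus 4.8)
The plan is to establish the two implications separately, noting that one direction is essentially already packaged by the machinery above, while the converse requires a genuine construction. Throughout I will identify, as the statement implicitly does, compactness of the space $(X,\tau)$ with compactness of the lattice $\tau$ in the sense of Definition \ref{def:compactlattices}: since joins in $\tau$ are unions and the top of $\tau$ is $X$, an open cover $\bigvee A = X$ admits a finite subcover exactly when some finite $B \subseteq A$ satisfies $\sup(B) = 1_\tau$.

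For the implication \ref{lem:primecomplprime-1}$\Rightarrow$\ref{lem:primecomplprime-3} I would simply chain two earlier results. Assuming $(X,\tau)$ is compact and $T_1$, Fact \ref{fact:completecomplact} tells us that $\tau$ is a complete and compact lattice (the $T_1$ hypothesis additionally yields subfitness, which is not needed here). Lemma \ref{lemma:minprimearecompprime} then applies verbatim: on a complete compact lattice every minimal prime filter is completely prime. So this direction introduces no new idea.

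For the converse \ref{lem:primecomplprime-3}$\Rightarrow$\ref{lem:primecomplprime-1} I would argue by contraposition, manufacturing a minimal prime filter that fails to be completely prime out of a witness to non-compactness. Suppose $(X,\tau)$ is not compact, so there is a family $A\subseteq\tau$ with $\bigvee A = X$ but $\bigvee B \neq X$ for every finite $B\subseteq A$. Let $I$ be the ideal generated by $A$, i.e. the downward closure of the set of finite joins of members of $A$. The hypothesis on $A$ says precisely that $X \notin I$, so $I$ is a proper ideal; moreover the union of any chain of proper ideals containing $I$ is again a proper ideal containing $I$ (the top cannot enter such a union), so Zorn's Lemma produces a maximal ideal $J \supseteq I$. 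By Corollary \ref{cor:idealsandfilters} its complement $F = J^C$ is a minimal prime filter. Now $X = 1_\tau \in F$ because $F$ is a proper filter, and $X = \bigvee A$; yet every $p \in A$ lies in $I \subseteq J$, hence $p \notin F$. Thus $F$ is a minimal prime filter containing the join of the family $A$ but none of its members, so $F$ is not completely prime, contradicting \ref{lem:primecomplprime-3}.

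The main obstacle, and the only place where the argument is not purely formal, is the construction of this offending minimal prime filter in the converse direction: one must check that non-compactness yields an ideal that is genuinely proper (this is exactly the clause that no finite subfamily of $A$ covers), and that a maximal ideal extending it exists whose complement is a minimal prime filter. The properness and the Zorn extension are routine once phrased correctly, and the passage from maximal ideal to minimal prime filter is handed to us by Corollary \ref{cor:idealsandfilters}; the conceptual content is simply the recognition that ``failure of complete primality of a minimal prime filter'' is the precise lattice-theoretic shadow of ``an open cover with no finite subcover.''
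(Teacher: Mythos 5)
Your proof is correct. The converse direction (\ref{lem:primecomplprime-3} $\Rightarrow$ \ref{lem:primecomplprime-1}) is essentially the paper's argument in contrapositive form: both extend a proper ideal arising from a witness to non-compactness (you use a cover with no finite subcover, the paper dualizes a family of closed sets with the finite intersection property) to a maximal ideal via Zorn's Lemma, pass to its complement via Corollary \ref{cor:idealsandfilters}, and exploit complete primality of the resulting minimal prime filter; the content is identical. Where you genuinely diverge is in the forward direction: you obtain it by chaining Fact \ref{fact:completecomplact} with the purely lattice-theoretic Lemma \ref{lemma:minprimearecompprime}, which is shorter and in fact needs no $T_1$ hypothesis at all (it works for any compact space). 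The paper instead gives a direct topological argument showing that in a compact $T_1$-space every maximal ideal of $\tau$ is of the form $I_x = \{U \in \tau : U \subseteq X\setminus\{x\}\}$, hence every minimal prime filter is the neighborhood filter $F_x$ of a point. Your route is more economical, but the paper's proof buys a structural by-product --- the identification of minimal prime filters on $\tau$ with neighborhood filters of points --- which is reused later (e.g., in the comparison with Maruyama's notion of point). Both are valid; if you adopt your version, that identification would need to be extracted separately where the paper relies on it.
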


\begin{proof}
\emph{}

\begin{description}
\item[\ref{lem:primecomplprime-1} implies \ref{lem:primecomplprime-3}] 
Let $I$ be an ideal on $\tau$, then $\bp{A^c:A\in I}$ is a family of closed sets with the finite intersection property. Consequently, by compactness, there is some $x\in X$ such that
$x\not\in A$ for all $A\in I$.

We conclude that 
\[
I\subseteq\bp{U\in\tau: U\subseteq X\setminus\bp{x}}=I_x.
\]

Since $(X,\tau)$ is $T_1$, we get that $X\setminus\bp{x}\in \tau$ is the generator of $I_x$.

We have shown that all maximal ideals on a compact $T_1$-space $(X,\tau)$ are of the form $I_x$ for some $x\in X$.\footnote{There is a characterization of points valid in all $T_1$-spaces (which is the essence beneath Maruyama's results of \cite{maruyama}): the maximal and join-complete ideals of open sets are exactly those generated by the complement of a point.
In the above argument compactness is needed to identify primality with complete primality.}

Let $F$ be a minimal prime filter on $\tau$, then (by 
Corollary \ref{cor:idealsandfilters}) $F^c=\tau\setminus F$ is a maximal ideal on $\tau$, which, by what already showed, must be some $I_x$ with $x\in X$. This gives that $x\in A$ for all $A\in F$ and thus  $F_x = F$ by minimality. Therefore, $F$ is obviously completely prime.

\item[\ref{lem:primecomplprime-3} implies \ref{lem:primecomplprime-1}]
Assume $\bp{C_j:j\in J}$ is a family of closed sets for $(X,\tau)$ with the finite intersection property. Then $\bp{C_j^c:j\in J}$ (i.e. the dual of $\bp{C_j: j\in J}$) is a preideal of open sets (i.e. it does not contain a finite subcover of $X$).
By Zorn's Lemma, we can find $I$ a maximal ideal on $\tau$ extending it. Then $I$ is prime (being maximal), hence its complement $I^c=\tau\setminus I$ is a minimal prime filter on $\tau$ (by Corollary \ref{cor:idealsandfilters}), hence also completely prime (by \ref{lem:primecomplprime-3}). Now let $U=\bigcup I$; $U$ is an open subset of $X$; if $U=X$, then 
$U\in I^c$; since $I^c$ is a completely prime filter, some $A\in I$ is in $I^c$, which is clearly impossible. Hence
$U\neq X$. Now pick $x\notin U$. Then $x\in A^c$ for all $A\in I$, giving that 
$\bp{C_j:j\in J}$ has non-empty intersection as witnessed by $x$.

\end{description}

\end{proof}

\section{Adjunctions between subfit lattices and topological spaces}
\label{sec:adjunctions}
We define a contravariant reflection between compact $T_1$-spaces with closed continuous maps and distributive bounded lattices with appropriate morphisms, namely the \emph{closed subfit morphisms}, which can be restricted to a duality between compact $T_1$-spaces  and complete, compact and subfit lattices. The strongly subfit morphisms are characterized by  a first order property similar to the bounded morphisms in Esakia duality for Heyting algebras~\cite{Esakia}. Using the adjunction, we will also bring to light a lattice theoretic reformulation of the Stone-\v{C}ech compactification.

\begin{definition}
\label{def:closedmorphism}
Let $P$ and $Q$ be bounded distributive lattices. A  morphism of bounded lattices $i: P \to Q$ is a \emph{closed subfit morphism} if for every $p \in P$ and $q \in Q$ such that $ i(p) \vee q = 1_Q$ there exists $p' \in P$ such that: 
 \begin{align*}
      p \vee p'=1_P
&& i(p') \leq q
 \end{align*}
\end{definition}

\begin{remark}
Note that a surjective morphism \( i: P \to Q \) need not be closed subfit. Let \( (X, \tau) \) be any compact \( T_1 \)-space, and let \( U \) be an open subset of \( X \) that is not closed. Then the inclusion map \( U \hookrightarrow X \) induces a surjective homomorphism of bounded lattices 
\(i \colon \tau \to {\downarrow}U\) given by \(V \mapsto U \cap V.
\)
Observe that
\[
i(U) \cup \emptyset = U = 1_{{\downarrow}U}.
\]
However, if \( V \subseteq X \) is such that \( i(V) \subseteq \emptyset \), then \( V \subseteq X \setminus U \). Since \( U \) is not closed, it follows that \( V \neq X \setminus U \), and therefore \( U \cup V \neq X = 1_\tau \). This shows that \( i \) is not closed subfit.
\end{remark}

\begin{notation} $ $
     \begin{itemize}
 \item  Let $f:(X,\tau) \to (Y,\sigma)$  be a continuous map between topological spaces.
Define: \begin{align*}
    k_f: (\sigma, \subseteq) &\to (\tau, \subseteq) \\ V & \mapsto f^{-1}[V]
\end{align*}

    \item Let $i:P \to Q$ be a closed subfit morphism between bounded distributive lattices. Define: \begin{align*}
     \pi^*_i:\corn{Q} & \to\corn{P} \\ F  & \mapsto i^{-1}[F]
 \end{align*}

\end{itemize}
\end{notation}

The following propositions show that the above notation is consistent and relate the notion of closed subfit morphism to that of closed continuous map:

\begin{proposition}
\label{prop:k_fmorphism}
    Let $f: (X, \tau) \to (Y, \sigma)$ be a closed continuous map between  $T_1$-topological space. Then, $k_f: (\sigma, \subseteq) \to (\tau, \subseteq)$ is a closed  subfit morphism. 
\end{proposition}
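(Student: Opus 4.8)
The plan is to verify the two defining properties in turn: first that $k_f$ is a morphism of bounded distributive lattices, and then that it satisfies the closed subfit constraint of Definition~\ref{def:closedmorphism}. The first part I would dispatch quickly, since preimage under any map commutes with all the relevant operations: $f^{-1}[\emptyset]=\emptyset$, $f^{-1}[Y]=X$, $f^{-1}[V\cup W]=f^{-1}[V]\cup f^{-1}[W]$ and $f^{-1}[V\cap W]=f^{-1}[V]\cap f^{-1}[W]$. Continuity of $f$ guarantees $f^{-1}[V]\in\tau$ whenever $V\in\sigma$, so $k_f$ genuinely lands in $\tau$ and preserves $0$, $1$, $\vee$ and $\wedge$.

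For the substantive part I would first rewrite the closed subfit condition in topological language. Here $P=\sigma$, $Q=\tau$, with bottom and top given by $\emptyset$ and the ambient spaces. So, for $V\in\sigma$ and $U\in\tau$, the hypothesis $k_f(V)\vee U=1_\tau$ reads $f^{-1}[V]\cup U=X$, equivalently $X\setminus U\subseteq f^{-1}[V]$; and the goal is to produce an open $V'\subseteq Y$ with $V\cup V'=Y$ and $f^{-1}[V']\subseteq U$.

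The key step is the choice of witness. Since $X\setminus U\subseteq f^{-1}[V]$ is equivalent to $f[X\setminus U]\subseteq V$, and since $f$ is a closed map, the set $C:=f[X\setminus U]$ is closed in $Y$; hence $V':=Y\setminus C$ is open, i.e.\ $V'\in\sigma$. I would then check the two required conditions: from $C\subseteq V$ we get $Y\setminus V\subseteq V'$, so $V\cup V'=Y$; and from $X\setminus U\subseteq f^{-1}[f[X\setminus U]]=f^{-1}[C]$ we get $f^{-1}[V']=X\setminus f^{-1}[C]\subseteq U$. This completes the verification.

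I expect the only real content to lie in the production of $V'$, and the single place where a hypothesis does genuine work is the closedness of $f$: it is exactly what makes $C=f[X\setminus U]$ closed, and therefore its complement $V'$ open, so that $V'$ belongs to $\sigma$ as the definition demands. Without closedness $V'$ need not be in $\sigma$ and the argument fails. I would also note that the $T_1$ assumption plays no active role in this particular argument; it is inherited from the ambient setting of the section rather than used here.
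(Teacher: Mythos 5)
Your proof is correct and follows essentially the same route as the paper's: both arguments take the witness $Y\setminus f[X\setminus U]$ (in your notation, $V'=Y\setminus f[X\setminus U]$ for the given $U\in\tau$), use closedness of $f$ precisely to ensure this set is open, and verify the two conditions by the same elementary set-theoretic computations. Your closing observations --- that closedness of $f$ is the only hypothesis doing real work and that $T_1$ is not used --- are accurate.
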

\begin{proof}
    Obviously, $k_f$ is a morphism of bounded distributive lattices. Now, let $U \in \sigma$ and $V \in \tau$ be such that $k_f(U) \cup V = X$. Define $U'= Y \setminus f[X \setminus V]$. Since $f$ is closed, $U' \in \sigma$. Moreover,
    \begin{align*}
    k_f(U') \subseteq V : \quad    x \in k_f(U') \quad &\implies \quad x \in  f^{-1}[Y \setminus [X \setminus V]] \\ \quad &\implies \quad f(x) \notin  f[X \setminus V] \\  &\implies \quad x \notin X \setminus V \\  &\implies \quad x \in V.  
 \\ \\
    U' \cup U = Y : \quad  \quad \quad    y \notin U' \quad &\implies \quad y \notin Y \setminus f(X \setminus V)\\
        &\implies \quad y \in f[X \setminus V]  \\
         &\implies \quad y=f(x) \ \mathrm{ for \ some } \  x \in X \setminus V
          \\
          &\implies \quad y=f(x) \ \mathrm{ for \ some } \  x \notin  V \\  &\implies \quad y=f(x) \ \mathrm{ for \ some } \  x \in  k_f(U)=f^{-1}[U] \\
            &\implies \quad y \in U. 
    \end{align*}
    \end{proof}

\begin{proposition}
\label{prop:pi_map}
    Let  $P$ and $Q$ be bounded distributive  lattices and $i: P\to Q$ be a closed subfit morphism. Then $\pi^*_i: \corn{Q}\to \corn{P}$ is a closed and continuous map.
\end{proposition}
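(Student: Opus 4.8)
The plan is to check in turn that $\pi^*_i$ takes values in $\corn{P}$, that it is continuous, and that it is closed; the closed subfit hypothesis (Definition~\ref{def:closedmorphism}) enters decisively in the first and third points, while continuity is formal. First I would show that $i^{-1}[F]$ is a minimal prime filter on $P$ whenever $F$ is one on $Q$. That $i^{-1}[F]$ is a proper prime filter is routine, as $i$ is a homomorphism of bounded lattices (preimages of filters are filters, $i$ preserving $\vee$ gives primality, and $i(0_P)=0_Q\notin F$ gives properness). Minimality is where closed subfitness enters: given $p\in i^{-1}[F]$, so $i(p)\in F$, Fact~\ref{fact:idealsandfilters-1} applied to $F$ yields $r\notin F$ with $i(p)\vee r=1_Q$, and closed subfitness produces $p'\in P$ with $p\vee p'=1_P$ and $i(p')\le r$, whence $p'\notin i^{-1}[F]$ (else $r\in F$). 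Fact~\ref{fact:idealsandfilters-1} read in the other direction then gives that $i^{-1}[F]$ is minimal prime. Continuity is immediate from $(\pi^*_i)^{-1}[\opencorn{P}{p}]=\bp{F\in\wallman{Q}:i(p)\in F}=\opencorn{Q}{i(p)}$ for every $p\in P$, together with the fact that the sets $\opencorn{P}{p}$ form a base (Proposition~\ref{prop:separationproperties}).

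For closedness I would first record that every closed set of $\corn{Q}$ has the form $C_I=\bp{F\in\wallman{Q}:F\cap I=\emptyset}$ for an ideal $I$ of $Q$: finite unions of subbasic closed sets satisfy $\closedcorn{Q}{q}\cup\closedcorn{Q}{q'}=\closedcorn{Q}{q\wedge q'}$ (Fact~\ref{fact:un&intprimefilters}), so an arbitrary closed set is an intersection $\bigcap_{q\in S}\closedcorn{Q}{q}$, which, since the filters in $\wallman{Q}$ are prime, equals $C_I$ for $I$ the ideal generated by $S$. Now fix $C_I$ and a point $G$ of $\corn{P}$ lying in $\Cl{\pi^*_i[C_I]}$; the goal is $G\in\pi^*_i[C_I]$. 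Unwinding membership in the closure through the neighborhood base $\bp{\opencorn{P}{p}:p\in G}$ (Proposition~\ref{prop:separationproperties}) shows that for every $p\in G$ there is $F\in C_I$ with $i(p)\in F$; in particular $i(p)\notin I$ for all $p\in G$ (otherwise $i(p)\in F\cap I=\emptyset$).

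The heart of the argument is to build a point of $C_I$ mapping to $G$. For each $p\in G$ choose, by minimality of $G$ (Fact~\ref{fact:idealsandfilters-1}), an element $p^*\notin G$ with $p\vee p^*=1_P$, and let $J_0$ be the ideal of $Q$ generated by $I\cup\bp{i(p^*):p\in G}$. I claim $J_0$ is proper, and this is exactly where closed subfitness is needed: if $J_0$ were improper, then $b\vee i(p_1^*)\vee\dots\vee i(p_n^*)=1_Q$ for some $b\in I$ and $p_1,\dots,p_n\in G$, so with $w=p_1^*\vee\dots\vee p_n^*$ we have $i(w)\vee b=1_Q$ and $w\notin G$ (primality of $G$); closed subfitness applied to $w$ and $b$ gives $p'\in P$ with $w\vee p'=1_P$ and $i(p')\le b\in I$, hence $i(p')\in I$, while $w\vee p'=1_P\in G$ together with $w\notin G$ forces $p'\in G$ (primality again), contradicting $i(p')\notin I$. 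Extending the proper ideal $J_0$ to a maximal ideal $J$ (Zorn's Lemma, as in the proof of Theorem~\ref{thm:compactT1}) and setting $F=Q\setminus J$, a minimal prime filter (Corollary~\ref{cor:idealsandfilters}), one gets $F\in C_I$ from $I\subseteq J$, and $i(p)\in F$ for every $p\in G$ since $i(p)\vee i(p^*)=1_Q$ with $i(p^*)\in J$ and $J$ proper. Thus $i[G]\subseteq F$, so $G\subseteq i^{-1}[F]=\pi^*_i(F)$; being minimal prime filters these form an antichain under inclusion, so they coincide, giving $G=\pi^*_i(F)\in\pi^*_i[C_I]$.

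I expect the closedness step, and inside it the passage from a merely \emph{prime} separating ideal to a \emph{maximal} one, to be the main obstacle: in a general bounded distributive lattice a proper filter such as $\uparrow i[G]$ need not be contained in any minimal prime filter, so one cannot separate it from $I$ by a maximal ideal through a soft argument, and plain compactness of $\corn{Q}$ does not help since the relevant separating sets $\closedcorn{Q}{i(p)}$ are closed rather than open. The device that resolves this is to pre-load the ideal with the co-witnesses $i(p^*)$, so that \emph{any} proper ideal containing them automatically avoids $\uparrow i[G]$, and the closed subfit hypothesis is precisely what guarantees that $J_0$ remains proper after this loading.
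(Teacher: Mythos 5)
Your proof is correct, and while the well-definedness and continuity steps coincide with the paper's, your treatment of closedness takes a genuinely different route. The paper reduces to subbasic closed sets: it fixes $q\in Q$, and for $F\notin\pi^*_i[\closedcorn{Q}{q}]$ produces (via the Claim, proved by extending the ideal generated by $i[P\setminus F]\cup\bp{q}$ to a maximal one) an element $r\notin F$ with $q\vee i(r)=1_Q$, then applies closed subfitness to get $p'$ with $\opencorn{P}{p'}$ an open neighborhood of $F$ missing the image; closed subfitness enters \emph{after} the Zorn step. You instead handle an arbitrary closed set directly, writing it as $C_I$ for an ideal $I$, and lift a closure point $G$ by pre-loading $I$ with the co-witnesses $i(p^*)$ and extending to a maximal ideal; closed subfitness enters \emph{before} the Zorn step, to keep $J_0$ proper. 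The two constructions are mirror images --- the paper arranges $i^{-1}[G]\subseteq F$, you arrange $G\subseteq i^{-1}[F]$, and both conclude by the antichain property of minimal prime filters --- but your version buys something concrete: the paper's reduction ``it suffices to show that $\pi^*_i[\closedcorn{Q}{q}]$ is closed'' is not automatic for a subbase and tacitly needs the observation that finite unions of the $\closedcorn{Q}{q}$ are again of that form together with a directed-intersection/compactness argument on fibers; your argument needs no such reduction. The paper's version, in exchange, is shorter and exhibits an explicit separating basic open set around each point outside the image.
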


\begin{proof} $ $

    \begin{description}[font=\normalfont\itshape]
  
  \item[$\pi^*_i$ is well defined.] 
  Let $G $ be a minimal prime filter on $Q$. We leave to the reader the proof that $\pi^*_i(G)$ is a prime filter of $P$,  we only prove its minimality.  To do so, we use the characterization of minimal prime filters on a bounded distributive lattice given by Fact \ref{fact:idealsandfilters-1}. Given $p \in \pi^*_i(G) $, we must find $p'\not\in\pi^*_i(G) $ such that $p\vee p'=1_P$: since $i(p)\in G$, there exists $q \in Q$ such that $q \notin G$ and $q \vee i(p) =1_P$; since $i$ is a closed subfit morphism, there exists $p' \in P$ such that $p \vee p' =1_P$ and $i(p') \leq q$, giving that $p' \notin \pi^*_i(G)$. 

  \item[$\pi^*_i$ is continuous.] Let $p \in P$. We claim that $(\pi^*_i)^{-1}[\opencorn{P}{p}]=\opencorn{Q}{i(p)}$. 
  \begin{align*}
       (\pi^*_i)^{-1}[\opencorn{P}{p}]&=\bp{F \in \mathsf{W}(Q):\pi^*_i(F) \in \opencorn{P}{p} }=\bp{F \in \mathsf{W}(Q):i^{-1}[F] \in \opencorn{P}{p} }\\&=\bp{F \in \mathsf{W}(Q): i(p) \in F}=\opencorn{Q}{i(p)}.
  \end{align*}

 \item[$\pi^*_i$ is closed.]   Let $q \in Q$. It suffices to show that $\pi^*_i[\closedcorn{Q}{q}]$ is closed. 


We will show the following:
\begin{claim}
\label{claim-closed}
Assume $F\not\in \pi^*_i[\closedcorn{Q}{q}]$.
Then there exists $r \in P \setminus F$ such that $q \vee  i(r)=1_Q$.
\end{claim}

 Assume the claim holds, and let $F\not\in \pi^*_i[\closedcorn{Q}{q}]$ and $r$ as in the Claim. Since $i$ is a closed subfit morphism, there exists an element $p' \in P$ such that \begin{align*}
          p' \vee r =1_P  && i(p') \leq q
   \end{align*}
By the primality of $F$, we have $p' \in F$, since $r$ is not. Clearly,  $\opencorn{P}{p'} \cap \pi^*_i[\closedcorn{Q}{q}] = \emptyset$ and $F\in \opencorn{P}{p'}$. 
%
%
Since $F$ was chosen arbitrarily outside of $\pi^*_i[\closedcorn{Q}{q}]$, we have shown that any point $F$ outside $\pi^*_i[\closedcorn{Q}{q}]$ has an open neighborhood disjoint from the latter. We conclude that $\pi^*_i[\closedcorn{Q}{q}]$ is closed.

In order to complete the proof of the proposition, it remains to prove the Claim \ref{claim-closed}. 
By contraposition, the claim is equivalent to the statement: if, for all $r\in P$, $r\not\in F$  implies $q\vee i(r)\neq 1_Q$, then $F\in\pi_i^*[\closedcorn{Q}{q}]$. Further, we have that for all $r\in P$, $r\not\in F$ implies $q\vee i(r)\neq 1_Q$ if and only if $i[P\setminus F]\cup\{q\}$ generates a proper ideal of $Q$. That is, by contraposition, we need to show that if $i[P\setminus F]\cup\{q\}$ generates a proper ideal of $Q$, then $F\in\pi_i^*[\closedcorn{Q}{q}]$.

To this end, if $i[P\setminus F]\cup\{q\}$ generates a proper ideal, then, by a standard application of Zorn's Lemma, it is contained in a maximal ideal and thus it is disjoint from a minimal filter $G$ of $Q$. Now it follows that $q\not\in G$ and, for all $p\in P$, if $i(p)\in G$ then $p\not\in P\setminus F$. That is, $i^{-1}[G]\subseteq F$. But since both $F$ and $i^{-1}[G]$ are minimal filters of $P$ this means they are equal.

\end{description}

  \end{proof}

We can now define the relevant categories for which our adjunctions and dualities can be formulated.

\begin{notation}
\emph{}
\begin{itemize}
\item
\label{not:compt1}
$\bool{Cpct-T_1}_\bool{closed}$ denotes the category whose objects are compact $T_1$-spaces with arrows given by \emph{closed} continuous functions.
\item
\label{not:compt2}
$\bool{Cpct-T_2}$ denotes the full subcategory of $\bool{Cpct-T_1}_\bool{closed}$ whose objects are compact Hausdorff spaces.\footnote{Recall that a continuous map between compact Hausdorff spaces is always closed.}

\item
\label{not:los}
$\bool{DL}_\bool{closed} $ denotes the category whose objects are bounded distributive lattices with arrows given by \emph{closed subfit} morphisms. 

\item 
\label{not:cclos}
$\bool{SbfL}_\bool{closed} $ denotes the full subcategory of $\bool{DL}_\bool{closed} $ whose objects are subfit lattices.

\item 
\label{not:cclos}
$\bool{Cpl-Cpct-SbfL}_\bool{closed} $ denotes the full subcategory of $\bool{DL}_\bool{closed} $ whose objects are complete, compact and subfit lattices.

\item
\label{not:nlos}$\bool{NSbfL}$ denotes the full subcategory of $\bool{DL}_\bool{closed} $ whose objects are normal and subfit lattices.

\item 
\label{not:ccnlos}
$\bool{Cpl-Cpct-NSbfL}$ denotes the full subcategory of $\bool{NSbfL}$ whose objects are complete, compact, normal and subfit lattices. 
\end{itemize}
\end{notation}

\begin{notation}
\label{not:functors}
Let
\begin{align*}
\mathcal{L}: & \quad (\bool{Cpct-T_1}_\bool{closed})^\mathrm{op} \to \bool{DL}_\bool{closed}  \\
& \quad (X, \tau) \mapsto (\tau, \subseteq) \\
& \quad (f: (X, \tau) \to (Y, \sigma)) \mapsto (k_f: (\sigma, \subseteq) \to (\tau, \subseteq))
\end{align*}
\quad \quad \quad
\begin{align*}
\mathcal{R}: & \quad (\bool{DL}_\bool{closed} )^\mathrm{op} \to \bool{Cpct-T_1}_\bool{closed} \\
& \quad (\bool{P}, \leq) \mapsto \corn{P} \\
& \quad (i: \bool{P} \to \bool{Q}) \mapsto (\pi^*_i: \corn{Q} \to \corn{P})
\end{align*}
\end{notation}

The following is the main original contribution of the paper:

\begin{theorem}
\label{thm:adj-dual} $ $
\begin{enumerate}
\item \label{thm:adj-dual-0}
    $(\mathcal{L}, \mathcal{R})$ forms a contravariant reflection between $\bool{Cpct-T_1}_\bool{closed}$ and $\bool{DL}_\bool{closed}$; 
    \item \label{thm:adj-dual-1}
    $(\mathcal{L}, \mathcal{R})$ forms a contravariant reflection between $\bool{Cpct-T_1}_\bool{closed}$ and $\bool{SbfL}_\bool{closed}$; 
        \item \label{thm:adj-dual-2} $(\mathcal{L}, \mathcal{R})$ forms a contravariant reflection between $\bool{Cpct-T_2}$ and $\bool{NSbfL}$; 
            \item  \label{thm:adj-dual-3} $(\mathcal{L}, \mathcal{R})$ forms a duality between $\bool{Cpct-T_1}_\bool{closed}$ and $\bool{Cpl-Cpct-SbfL}_\bool{closed} $;
                    \item \label{thm:adj-dual-4} $(\mathcal{L}, \mathcal{R})$ forms a duality between $\bool{Cpct-T_2}$ and $\bool{Cpl-Cpct-NSbfL}$.
\end{enumerate}
\end{theorem}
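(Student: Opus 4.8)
The plan is to exhibit the pair $(\mathcal{L},\mathcal{R})$ as a contravariant (dual) adjunction by producing the two unit natural transformations and verifying the triangle identities, and then to obtain parts (2)--(5) by restricting the adjunction to the relevant full subcategories and checking when the units become isomorphisms. The two units are the obvious candidates: the topological unit $\eta_{(X,\tau)}\colon (X,\tau)\to \mathcal{R}\mathcal{L}(X,\tau)=\corn{\tau}$ sending $x$ to its neighbourhood filter $F_x=\{U\in\tau:x\in U\}$, and the algebraic unit $\epsilon_P\colon P\to \mathcal{L}\mathcal{R}(P)=(\tau_P,\subseteq)$ sending $p$ to $\opencorn{P}{p}$. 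I would first dispatch the functoriality of $\mathcal{L}$ and $\mathcal{R}$: that $\mathcal{L}$ is well defined is Proposition \ref{prop:k_fmorphism} together with Fact \ref{fact:completecomplact} (the topology of a compact $T_1$-space is a compact, complete, subfit lattice), and that $\mathcal{R}$ is well defined is Theorem \ref{thm:compactT1} together with Proposition \ref{prop:pi_map}; preservation of identities and (contravariant) composition, i.e. $k_{g\circ f}=k_f\circ k_g$ and $\pi^*_{j\circ i}=\pi^*_i\circ\pi^*_j$, is routine and I would only state it.

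The two substantive points are the nature of the units. For $\eta$, when $(X,\tau)$ is compact $T_1$ the proof of Proposition \ref{lem:primecomplprime} shows that every minimal prime filter on $\tau$ is of the form $F_x$, so $\{F_x:x\in X\}$ is exactly $\mathsf{W}(\tau)$; taking $\mathcal{B}=\tau$ in Fact \ref{fact:omeomspacespreorders} then identifies $\eta_{(X,\tau)}$ with the homeomorphism $i_{(X,\tau)}^{\tau}$, so $\eta$ is a \emph{natural isomorphism} on compact $T_1$-spaces (in particular a closed continuous map). The map $\epsilon_P$ is a bounded lattice homomorphism by Fact \ref{fact:un&intprimefilters}, and I expect the \textbf{main obstacle} to be checking that it is a \emph{closed subfit} morphism. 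This is exactly where the compactness of $\corn{P}$ (Theorem \ref{thm:compactT1}) enters: given $p\in P$ and an open $V\in\tau_P$ with $\epsilon_P(p)\vee V=\opencorn{P}{p}\cup V=\mathsf{W}(P)$, the closed complement $\mathsf{W}(P)\setminus\opencorn{P}{p}$ is compact and covered by basic opens $\opencorn{P}{q}\subseteq V$, so finitely many $q_1,\dots,q_n$ suffice; putting $p'=q_1\vee\cdots\vee q_n$ gives $\epsilon_P(p')=\opencorn{P}{p'}\subseteq V$ and $\opencorn{P}{p\vee p'}=\mathsf{W}(P)$, whence $p\vee p'=1_P$ (if $p\vee p'\neq1_P$, extend $\downarrow(p\vee p')$ to a maximal ideal by Zorn and pass to its complement, a minimal prime filter omitting $p\vee p'$, contradicting $\opencorn{P}{p\vee p'}=\mathsf{W}(P)$).

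Naturality of $\eta$ reduces to the identity $\pi^*_{k_f}(F_x)=k_f^{-1}[F_x]=F_{f(x)}$, and naturality of $\epsilon$ to $(\pi^*_i)^{-1}[\opencorn{P}{p}]=\opencorn{Q}{i(p)}$, which is precisely the computation already carried out in the proof of Proposition \ref{prop:pi_map}; the triangle identities then follow from these pointwise formulas. This establishes the contravariant adjunction, and since $\eta$ is a natural isomorphism the functor $\mathcal{L}$ is full and faithful, so $\bool{Cpct-T_1}_\bool{closed}$ is exhibited as a reflective subcategory, giving part (1); part (2) is the same adjunction with $\mathcal{L}$ corestricted to the full subcategory $\bool{SbfL}_\bool{closed}$, which is legitimate because $\mathcal{L}(X,\tau)=\tau$ is always subfit. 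For part (3) I would note that for compact Hausdorff $X$ the lattice $\tau$ is a \emph{normal lattice} (the lattice condition of Definition \ref{def:normallattice} for $U\cup V=X$ is exactly topological normality of $X$ applied to the disjoint closed sets $X\setminus U$, $X\setminus V$), so $\mathcal{L}$ lands in $\bool{NSbfL}$, while by Theorem \ref{thm:compactT2} $\mathcal{R}$ sends normal subfit lattices to compact Hausdorff spaces (and continuous maps between such are automatically closed); thus the adjunction restricts to $\bool{Cpct-T_2}$ and $\bool{NSbfL}$. Finally, for the dualities (4) and (5) I would characterize when $\epsilon_P$ is an isomorphism: it is injective iff $P$ is subfit (Corollary \ref{cor:subfit}), and surjective onto $\tau_P$ when $P$ is complete and compact, since any open $\bigcup_j\opencorn{P}{q_j}$ equals $\opencorn{P}{\sup_j q_j}$ by Lemma \ref{lemma:minprimearecompprime} (minimal prime filters on a complete compact lattice are completely prime). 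As a bijective bounded lattice homomorphism $\epsilon_P$ is a lattice isomorphism, and a lattice isomorphism is closed subfit in both directions, hence an isomorphism in $\bool{DL}_\bool{closed}$; so on $\bool{Cpl-Cpct-SbfL}_\bool{closed}$ both $\eta$ and $\epsilon$ are isomorphisms, yielding the duality (4), and adjoining normality as in part (3) yields the duality (5).
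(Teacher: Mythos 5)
Your proposal is correct and follows essentially the same route as the paper, which deduces the theorem from Propositions \ref{prop:k_fmorphism}, \ref{prop:pi_map}, \ref{prop:counit} and \ref{prop:unit} with exactly the unit $\eta_X$ and counit $\epsilon_P$ you describe; your compactness argument for $\epsilon_P$ being closed subfit and your surjectivity/injectivity analysis reproduce the content of Proposition \ref{prop:counit}, and your identification of $\{F_x:x\in X\}$ with $\mathsf{W}(\tau)$ reproduces Proposition \ref{prop:unit}. You merely make explicit a few steps the paper leaves implicit (naturality, the triangle identities, and that $\mathcal{L}$ lands in $\bool{NSbfL}$ for compact Hausdorff spaces).
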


In order to prove the theorem we need the following technical result:

\begin{proposition}
    \label{prop:counit}
    Let $P$ be a bounded distributive lattice. Then \begin{align*}
    \epsilon_P:(P, \leq) &\to (\tau_{\mathsf{W}(P)}, \subseteq)  \\
    p &\mapsto \opencorn{P}{p}
\end{align*}
is a closed subfit morphism. Moreover, if $P$ is subfit, then $\epsilon_P$ is injective and if $P$ is  subfit, complete  and compact, then $\epsilon_P$ is an isomorphism. 
\end{proposition}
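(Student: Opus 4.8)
The plan is to verify the three assertions of Proposition~\ref{prop:counit} in sequence. First I would show that $\epsilon_P\colon p\mapsto\opencorn{P}{p}$ is a morphism of bounded lattices: since $\tau_{\wallman{P}}$ has the sets $\opencorn{P}{p}$ as a basis, and since the minimal prime filters on $P$ are prime, Fact~\ref{fact:un&intprimefilters} gives $\opencorn{P}{p}\cup\opencorn{P}{q}=\opencorn{P}{p\vee q}$ and $\opencorn{P}{p}\cap\opencorn{P}{q}=\opencorn{P}{p\wedge q}$; moreover $\opencorn{P}{0_P}=\emptyset$ and $\opencorn{P}{1_P}=\wallman{P}$. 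Thus $\epsilon_P$ preserves the lattice operations and bounds. The genuinely new content here is that $\epsilon_P$ is a \emph{closed subfit} morphism. Given $p\in P$ and an element $q$ of the target lattice $\tau_{\wallman{P}}$ with $\epsilon_P(p)\cup q=\wallman{P}$, I must produce $p'\in P$ with $p\vee p'=1_P$ and $\opencorn{P}{p'}\subseteq q$. The point is that $q$ is an open subset of the \emph{compact} space $\corn{P}$ (compact by Theorem~\ref{thm:compactT1}). Write $q=\bigcup\{\opencorn{P}{r}:r\in R\}$ for some $R\subseteq P$; then $\{\opencorn{P}{p}\}\cup\{\opencorn{P}{r}:r\in R\}$ covers $\wallman{P}$, so by compactness finitely many $r_1,\dots,r_n\in R$ suffice, i.e. $\opencorn{P}{p}\cup\opencorn{P}{r_1\vee\cdots\vee r_n}=\wallman{P}$. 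Setting $p'=r_1\vee\cdots\vee r_n$ gives $\opencorn{P}{p\vee p'}=\wallman{P}$, which (since $\wallman{P}$ consists of \emph{all} minimal prime filters) forces $p\vee p'=1_P$; and $\opencorn{P}{p'}\subseteq q$ by construction.

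Next I would prove injectivity of $\epsilon_P$ when $P$ is subfit. This is essentially a restatement of Corollary~\ref{cor:subfit}: if $P$ is subfit then it admits a large family of minimal prime filters, so for distinct $p,q\in P$ there is a minimal prime filter separating them, whence $\opencorn{P}{p}\neq\opencorn{P}{q}$. Equivalently, Fact~\ref{fact:charsubfitness} gives $\uparrow a=\bigcap\{F:F\text{ minimal prime},\ a\in F\}$, so the assignment $a\mapsto\opencorn{P}{a}$ determines $\uparrow a$ and hence $a$.

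Finally, for the isomorphism claim I assume $P$ subfit, complete and compact. Injectivity is already in hand, so I must show $\epsilon_P$ is surjective, i.e. every open set $U\in\tau_{\wallman{P}}$ equals $\opencorn{P}{p}$ for some $p\in P$. Writing $U=\bigcup\{\opencorn{P}{r}:r\in R\}$ and setting $p=\sup R$ (which exists by completeness), I would argue $U=\opencorn{P}{p}$. The inclusion $\opencorn{P}{r}\subseteq\opencorn{P}{p}$ for each $r\in R$ is clear since $r\le p$, giving $U\subseteq\opencorn{P}{p}$. For the reverse inclusion, take $F\in\opencorn{P}{p}$, so $p=\sup R\in F$; here is where compactness and minimality combine as in Lemma~\ref{lemma:minprimearecompprime}: since $P$ is complete and compact, $F$ is completely prime, so $\sup R\in F$ forces some $r\in R$ with $r\in F$, i.e. $F\in\opencorn{P}{r}\subseteq U$. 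Hence $\opencorn{P}{p}\subseteq U$ and equality holds, establishing surjectivity. I expect the main obstacle to be the surjectivity step: one must be careful that the supremum $\sup R$ computed in the lattice $P$ interacts correctly with unions in the topology $\tau_{\wallman{P}}$, and this reconciliation is exactly what completeness (to form $\sup R$) together with complete primality of minimal prime filters (the payoff of compactness via Lemma~\ref{lemma:minprimearecompprime}) is designed to supply.
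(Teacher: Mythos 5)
Your proposal is correct and follows essentially the same route as the paper: compactness of $\corn{P}$ (Theorem~\ref{thm:compactT1}) to extract a finite join witnessing the closed subfit condition, Corollary~\ref{cor:subfit} for injectivity under subfitness, and complete primality of minimal prime filters on a complete compact lattice (Lemma~\ref{lemma:minprimearecompprime}) to show every open set of $\wallman{P}$ is of the form $\opencorn{P}{p}$, yielding surjectivity. Your explicit verification that $\opencorn{P}{p\vee p'}=\wallman{P}$ forces $p\vee p'=1_P$ (via the existence of a minimal prime filter omitting any non-top element) is a detail the paper leaves implicit, and is a welcome addition.
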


\begin{proof}$ $

\begin{description}[font=\normalfont\itshape]
    \item[$\epsilon_P$ is a closed subfit morphism.] Clearly, $\epsilon_P$ is a morphism of bounded lattices. We show that $\epsilon_P$ is a closed subfit morphism. Suppose \[
\opencorn{P}{p} \cup A = \mathsf{W}(P)
\]
for some $A$ open subset of $\mathsf{W}(P)$. Since $\mathsf{W}(P)$ is compact and $A$ is a union of sets of the form $\opencorn{P}{q}$, there exists $q_1 \dots q_n \in P$ such that $A\supseteq\bigcup_{i=1}^n \opencorn{P}{q_i} $ and
\[
\opencorn{P}{p} \cup \bigcup_{i=1}^n \opencorn{P}{q_i} = \mathsf{W}(P).  
\]
Therefore, 
\begin{align*}
  p \vee \bigvee_{i=1}^n q_i = 1_P, &&\text{ and } &&\epsilon_P(\bigvee_{i=1}^n q_i) = \bigcup_{i=1}^n \opencorn{P}{q_i} \subseteq  A.
\end{align*}

\item[If $P$ is subfit, then $\epsilon_P$ is injective.]     
By Proposition \ref{prop:charsubfit} and Corollary \ref{cor:subfit}, if $P$ is subfit then $\epsilon_P$ is injective.

\item[$\epsilon_P$ is a closed subfit isomorphism if $P$ is subfit, complete and compact.]  By Lemma \ref{lemma:minprimearecompprime}, the minimal prime filters of $P$ are completely prime, which is clearly equivalent to establish the following type of identities:   
\begin{equation}\label{eqn:=minprimecompletelyprime}
\opencorn{P}{\bigvee_{i\in I} p_i} = \bigcup_{i \in I}\opencorn{P}{p_i}.
\end{equation}
 Now, define 
 \begin{align*}
     (\epsilon_P)^{-1}: (\tau_{\mathsf{W}(P)}, \subseteq) &\to (P, \leq)   \\
     \opencorn{P}{p} &\mapsto p
\end{align*}
$ (\epsilon_P)^{-1}$ is the (well-defined) inverse of $\epsilon_P$ (since every open set is of the form $\opencorn{P}{p}$ for some $p\in P$ by \ref{eqn:=minprimecompletelyprime}). Note that the bijectivity of the map $\epsilon_P$ automatically ensures that its inverse is a closed subfit
morphism.
\end{description}
\end{proof}


\begin{proposition}
    \label{prop:unit}
    Let $(X,\tau)$ be a $T_1$-topological space and \(L\) be a base for the topology that is closed under finite unions and intersections. Let
\begin{align*}
        \eta^L_X: (X,\tau) &\to \corn{L} \\
        x &\mapsto F^L_x=\bp{U \in L: x \in U}
    \end{align*}
    \begin{enumerate}
    \item \label{prop:unit-item1}\(\eta^\tau_X\) is a topological embedding;
        \item \label{prop:unit-item2} if \((X,\tau)\) is compact then \(\eta^L_X\) is a homeomorphism.
    \end{enumerate}
\end{proposition}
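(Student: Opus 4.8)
The plan is to handle both items through the same scheme: in each case I will show that $\eta^L_X$ is a topological embedding, and in the compact case I will additionally establish surjectivity, so that the embedding becomes a homeomorphism. First I would check well-definedness, i.e.\ that each $F^L_x$ is a \emph{minimal} prime filter on $L$. Primality is immediate ($F^L_x$ is upward closed and closed under finite intersections in $L$, and $U\cup V\in F^L_x$ forces $x\in U$ or $x\in V$). For minimality I would invoke the criterion of Fact \ref{fact:idealsandfilters-1}: given $U\in F^L_x$ I must produce $V\in L$ with $x\notin V$ and $U\cup V=X$. When $L=\tau$ (item \ref{prop:unit-item1}) the $T_1$ hypothesis gives $X\setminus\{x\}\in\tau$, so $V:=X\setminus\{x\}$ works immediately, with no appeal to compactness. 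For a general base $L$ (item \ref{prop:unit-item2}) I would instead cover the closed, hence compact, set $X\setminus U$ by basic open sets $V_y\in L$ with $y\in V_y\subseteq X\setminus\{x\}$ (these exist since $L$ is a base and $X\setminus\{x\}$ is open by $T_1$), extract a finite subcover by compactness, and take $V$ to be its union; then $V\in L$, $x\notin V$, and $U\cup V=X$.

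Next I would verify the embedding properties, which are uniform and require no compactness. Injectivity follows from $T_1$ (equivalently $T_0$): for $x\neq y$ a basic open $U\in L$ with $x\in U\subseteq X\setminus\{y\}$ separates $F^L_x$ from $F^L_y$; this is essentially Fact \ref{fact:omeomspacespreorders}. Continuity is the computation $(\eta^L_X)^{-1}[\opencorn{L}{U}]=\{x: U\in F^L_x\}=U\in\tau$, and since the sets $\opencorn{L}{U}$ form a subbase of $\corn{L}$ this suffices. For openness onto the image I would observe that $\eta^L_X[U]=\opencorn{L}{U}\cap\eta^L_X[X]$ and that $\{U:U\in L\}$ is a base for $\tau$, so images of basic opens are relatively open. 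Together these show $\eta^L_X$ is a homeomorphism onto its image, which already proves item \ref{prop:unit-item1}.

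The heart of item \ref{prop:unit-item2} is surjectivity, and this is where compactness does the real work. Given a minimal prime filter $G$ on $L$, its complement $G^C=L\setminus G$ is a maximal, in particular proper, ideal by Corollary \ref{cor:idealsandfilters}. If the open sets lying in $G^C$ covered $X$, compactness would yield finitely many $V_1,\dots,V_n\in G^C$ with $V_1\cup\cdots\cup V_n=X=1_L$, contradicting properness of the ideal $G^C$; hence $\bigcup\{V:V\in G^C\}\neq X$. Choosing $x$ outside this union gives $x\notin V$ for every $V\notin G$, i.e.\ $F^L_x\subseteq G$. Since $F^L_x$ is a nonempty prime filter and $G$ is minimal prime, this inclusion must be an equality, so $G=F^L_x$ lies in the image. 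Combined with the embedding established above, $\eta^L_X$ is a continuous open bijection, hence a homeomorphism.

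The \emph{main obstacle} is the surjectivity argument of item \ref{prop:unit-item2} together with the well-definedness for a general base: both genuinely need compactness (and the $T_1$-separation, to realize $X\setminus\{x\}$ as a union of basic open sets avoiding $x$). Everything else is formal and follows from the material recalled in Section \ref{Preliminaries-background}.
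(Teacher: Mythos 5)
Your proposal is correct and follows essentially the same route as the paper: minimality of $F^\tau_x$ via the open set $X\setminus\{x\}$, minimality of $F^L_x$ under compactness via a finite subcover of basic opens avoiding $x$, the embedding via Fact \ref{fact:omeomspacespreorders}, and surjectivity by showing the closed sets $X\setminus U$ for $U\notin G$ have a common point $x$ and then concluding $F^L_x=G$ by minimality. The only differences (covering $X\setminus U$ instead of $X\setminus\{x\}$, and phrasing the contradiction through properness of the maximal ideal $G^C$ rather than primality of $G$) are cosmetic.
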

\begin{proof}  \( \)
\begin{description}[font=\normalfont\itshape]
\item[\ref{prop:unit-item1}] By Fact \ref{fact:omeomspacespreorders}, it is enough to show that 
\[
\bp{F^\tau_x: x  \in X} \subseteq \mathsf{W}(\tau).
\]
Clearly, $F^\tau_x$ is a prime filter. Moreover, if $U \in F^\tau_x$, then $U \cup ( X \setminus \bp{x})  =X$  and $X \setminus \bp{x} \notin F^\tau_x$. Hence, $F^\tau_x$ is minimal (note that $X \setminus \bp{x}$ is open since $(X,\tau)$ is $T_1$).

\item[\ref{prop:unit-item2}] Again, by Fact \ref{fact:omeomspacespreorders}, it is enough to show that 
\[
\bp{F^L_x: x  \in X} = \mathsf{W}(L).
\]

    \item[$\bp{F^L_x: x  \in X} \subseteq \mathsf{W}(L)$:] Clearly, $F^L_x$ is a prime filter.   Moreover, if $U \in F^L_x$, then $U \cup ( X \setminus \bp{x})  =X$.  Since \(X \setminus \bp{x}\) is an open set and \(L\) is a base, \[
    X \setminus \bp{x} = \bigcup_{i \in I} U_i
    \]
    with \(U_i \in L\) for all \(i \in I\).
    Hence, by compactness there exist \(i_1, \dots, i_n \in I\) such that \[
    U \cup \bigcup_{i=1}^n U_i=X. 
    \]
    Clearly, \( \bigcup_{i=1}^n U_i \in L\) and \( \bigcup_{i=1}^n U_i \notin F^L_x\), since \(\bigcup_{i=1}^n U_i \subseteq X \setminus \bp{x}\). Hence, \(F^L_x\) is a minimal prime filter on \(L\). 
    \item[$\bp{F^L_x: x  \in X} \supseteq \mathsf{W}(L)$:]  Suppose $G \in \mathsf{W}(L) $. If \[
\bigcap_{U \in L \setminus G}( X \setminus U )= \emptyset
\]
then, by the compactness of the space, there exists $U_1 \dots U_n \in L \setminus G$ such that
\[
\bigcap_{i=1}^n (X \setminus U_i) = \emptyset \quad \quad \mathrm{\ i.e. \ } \quad \quad \bigcup_{i=1}^n  U_i = X
\]
which is a contradiction, as $G$ is prime. Now, if \[
x \in \bigcap_{U \in L \setminus G} (X \setminus U) \]
this means that $F^L_x \subseteq G$. We conclude by minimality.
\end{description}
\end{proof}

\begin{remark}
Note that:
\begin{itemize}
\item 
The assumption that $L$ is closed under finite unions and intersections is rather restrictive: the regular open sets of a topology $\tau$ may not form such a base, as the union of two regular open sets may not be regular. Accordingly the above Lemma fails for $L=\RO(\tau)$ for many $(X,\tau)$; for example when $\tau$ is the euclidean topology on $X=[0;1]$ (in this case $\bool{W}(L)=\mathrm{St}(\RO(\tau))$ is extremally disconnected compact Hausdorff, while $X$ is connected).
\item
Given a $T_1$-space $(X,\tau)$ different bases $L$ for $\tau$ closed under finite unions and intersections will give rise to non-homeomorphic compactifications of $X$; a meaningful classification of these is an elusive problem already present in Frink's work \cite{frink} (even if Frink states it only for spaces admitting such a base which is  a normal lattice, i.e. the Tychonoff spaces). 
\end{itemize}
\end{remark}
   The proof of Theorem \ref{thm:adj-dual} is now straightforward:
    
    \begin{proof}[Proof of Theorem \ref{thm:adj-dual}] It follows immediately from 
 Proposition \ref{prop:pi_map}, Proposition \ref{prop:k_fmorphism},   Proposition \ref{prop:counit} and Proposition \ref{prop:unit}. Clearly, the counit and the unit of the adjunctions are: 
    \begin{align*}
         &\epsilon:  \bool{1}_{\bool{DL}_\bool{closed} }  \to \mathcal{L} \circ \mathcal{R^\mathrm{op}} & &\eta: \bool{1}_{\bool{Cpct-T_1}_{\bool{closed}} }\to \mathcal{R} \circ \mathcal{L^\mathrm{op}}  \\
             &\epsilon_P:(P, \leq) \to (\tau_{\mathsf{W}(P)}, \subseteq) & &\eta_X:  (X, \tau) \to \corn{\tau}
    \end{align*}         
\end{proof}
In view of the above proposition we introduce the following:

\begin{definition}
Given a $T_1$-space $(X,\sigma)$,
$\bool{W}(\sigma)$ (as defined in Notation \ref{not:WP}) is the \emph{Wallman compactification} of $(X,\sigma)$.
\end{definition}

This compactification does not have the nice universal properties of the Stone-\v{C}ech compactification (see for example this \href{https://math.stackexchange.com/questions/3892507/what-happens-to-the-stone-cech-compactification-if-you-change-compact-hausdorff/3892815#3892815}{math.stackexchange} question).

\subsection{The maximal normal sublattice of a complete, compact and subfit lattice}
\label{subsection:maximalnormalsublattice}

We move the Stone-\v{C}ech compactification theorem using our adjunction to obtain a dual result on lattices which is tracked also in \cite[Section 9]{Chandler1998}.

\begin{definition}
\label{def:stonecech}
   The \emph{weak Stone-\v{C}ech compactification} of a topological space $(X,\tau)$ is its unique weak Hausdorff compactification $(\iota, \beta X, \beta \tau)$ such that any continuous map $f : (X,\tau) \rightarrow (K, \sigma)$, where $(K, \sigma)$ is a compact Hausdorff space, extends uniquely to a continuous map \[\beta f : (\beta X, \beta \tau) \rightarrow (K, \sigma)\]
\[
\begin{tikzcd}
(X, \tau) \arrow[r, "\iota"] \arrow[d, "f"'] & (\beta X, \beta \tau) \arrow[dl, "\beta f"] \\
(K, \sigma) &
\end{tikzcd}
\]
Whn $\iota_X$ is a \emph{topological embedding}, $(\iota,\beta X, \beta \tau)$ is the \emph{Stone-\v{C}ech compactification} of $(X, \tau)$.
\end{definition}

\begin{lemma}
\label{lemma:morphism-norm-subfit}
    Let $P,Q$ be bounded distributive lattices. Suppose $Q$ is normal. Let $i: Q \to P$ be a morphism of bounded lattices. Then \begin{align*}
        f_i: \corn{P} &\to \corn{Q}  \\
        F &\mapsto \bp{q \in Q: i(q) \in F,\, i(r) \notin F \text{ and } q \vee r=1_Q \text{ for some } r \in Q}
    \end{align*}
    is a continuous closed map. Moreover, if $i$ is injective then  $ f_i$ is surjective. 
    \end{lemma}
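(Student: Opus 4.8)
The plan is to unpack the definition of $f_i$ and verify in turn that it is well-defined (lands in minimal prime filters on $Q$), continuous, closed, and surjective when $i$ is injective. Before anything else I would record a cleaner description of $f_i(F)$: since $Q$ is normal, I expect the auxiliary witness $r$ in the definition to be removable, so that
\[
f_i(F)=\bp{q\in Q: i(q)\in F \text{ and } \exists r\ (i(r)\notin F,\ q\vee r=1_Q)}.
\]
The first task is to show this set is a minimal prime filter on $Q$. Using Fact~\ref{fact:idealsandfilters-1}, minimality amounts to showing that for each $q\in f_i(F)$ there is $q'\notin f_i(F)$ with $q\vee q'=1_Q$; the witness $r$ built into the definition is precisely the candidate, and normality (Definition~\ref{def:normallattice}) is what lets me split a covering pair $q\vee r=1_Q$ into a disjoint pair $s\wedge t=0_Q$ with $s\vee q=t\vee r=1_Q$, which is the mechanism that keeps the complement an ideal. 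Primality and upward closure should follow from $F$ being a (minimal) prime filter on $P$ together with $i$ being a lattice homomorphism.

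Next I would handle continuity by computing the preimage of a basic open set $\opencorn{Q}{q}$. The goal is an identity of the shape $(f_i)^{-1}[\opencorn{Q}{q}]=\bigcup\bp{\opencorn{P}{i(s)}: s\vee r=1_Q,\ i(r)\neq 1_P,\dots}$ or, more optimistically, that this preimage is open because membership $q\in f_i(F)$ is witnessed by an existential over elements whose image lies in $F$. Concretely, $f_i(F)\in\opencorn{Q}{q}$ iff $i(q)\in F$ and some $r$ with $i(r)\notin F$ satisfies $q\vee r=1_Q$; the first conjunct is the open condition $F\in\opencorn{P}{i(q)}$, and the existential over $r$ should be expressible as a union of such basic open sets, again invoking normality to normalise the witnesses. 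I would then obtain closedness as in Proposition~\ref{prop:pi_map}: show every $F$ outside $f_i[\closedcorn{P}{p}]$ (for the relevant closed sets, or directly show the image of each closed set is closed) has a basic open neighbourhood missing the image, using normality to manufacture the separating element.

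For surjectivity under the hypothesis that $i$ is injective, given a minimal prime filter $G$ on $Q$ I would need to produce a minimal prime filter $F$ on $P$ with $f_i(F)=G$. The natural move is to push the filter $i[G]$ forward: consider the filter on $P$ generated by $i[G]$ together with the requirement that it avoid $i[q']$ for $q'\notin G$, and extract from it (by Zorn's Lemma, exactly as in the proof of Theorem~\ref{thm:compactT1}) a minimal prime filter $F$. Injectivity of $i$ is what guarantees that the generated structure is proper and that the reverse-image correspondence is tight enough to recover $G$ exactly rather than a proper sub- or super-filter; I would then check $f_i(F)=G$ directly from the definition, using minimality of $G$ to supply the witnesses $r$.

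I expect the main obstacle to be the well-definedness step, specifically proving that $f_i(F)$ is genuinely a \emph{minimal} prime filter and not merely a prime filter: the definition bakes in the minimality witness $r$, but verifying that the resulting set is closed under the lattice operations (in particular finite meets) while still satisfying the Fact~\ref{fact:idealsandfilters-1} criterion requires the full strength of normality of $Q$ to convert arbitrary covering relations into complemented ones, and this is the delicate calculation that makes the hypothesis on $Q$ indispensable. Surjectivity is the second most delicate point, since it is where injectivity of $i$ must be leveraged precisely, but it follows a familiar Zorn's-Lemma template once the filter-lifting is set up correctly.
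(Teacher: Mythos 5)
Your outline has the right overall shape (well-definedness, continuity, closedness, surjectivity via Zorn's Lemma), and your route to minimality via Fact~\ref{fact:idealsandfilters-1} is legitimate: the witness $r$ baked into the definition of $f_i(F)$ immediately supplies the element that criterion demands, \emph{once $f_i(F)$ is known to be a prime filter}. The genuine gap is exactly there. You assert that primality of $f_i(F)$ ``should follow from $F$ being a (minimal) prime filter on $P$ together with $i$ being a lattice homomorphism,'' and you instead locate the use of normality in closure under the lattice operations. Both attributions are off. Closure under finite meets needs only primality of $F$: if $p,q\in f_i(F)$ with witnesses $r,s$, then $i(r\vee s)\notin F$ because $F$ is prime, and $(p\wedge q)\vee r\vee s=1_Q$. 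Where normality of $Q$ is indispensable is primality of $f_i(F)$: if $p\vee q\in f_i(F)$ with witness $r$ and \emph{both} $i(p)$ and $i(q)$ lie in $F$, neither $p$ nor $q$ comes with an obvious witness; one must assume $p,q\notin f_i(F)$, use normality to produce $s,t$ with $p\vee s=1_Q$, $q\vee r\vee t=1_Q$ and $s\wedge t=0_Q$, deduce $i(s),i(t)\in F$, and contradict $i(s)\wedge i(t)=0_P$. Without this argument the map is not even well defined, so this is a missing idea, not a routine verification. (Also, your proposed ``cleaner description'' of $f_i(F)$ is a verbatim restatement of the given definition, not a simplification.)

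The remaining steps are sketched at a level where the actual content is absent. For continuity the key construction is: given $F$ in the preimage of $\opencorn{Q}{q}$ with witness $r$, normality yields $s,t$ with $q\vee t=1_Q$, $r\vee s=1_Q$, $s\wedge t=0_Q$, and then $\opencorn{P}{i(s)}$ is a basic neighbourhood of $F$ contained in the preimage; your ``union of basic opens'' formulation does not identify this element. For closedness you propose redoing the separation argument of Proposition~\ref{prop:pi_map}, whereas it is immediate: $\corn{P}$ is compact (Theorem~\ref{thm:compactT1}), $\corn{Q}$ is compact Hausdorff since $Q$ is normal (Theorem~\ref{thm:compactT2}), and a continuous map from a compact space to a compact Hausdorff space is automatically closed. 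Your surjectivity plan matches the paper's (extend $i[G]$ to a maximal filter $H$, pick a minimal prime $F\subseteq H$; injectivity guarantees $i[G]$ is a proper prefilter), but the verification that $f_i(F)=G$ again runs through a normality-based contradiction that you do not supply.
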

    
\begin{proof} $ $

    \begin{description}[font=\normalfont\itshape]
    
  \item[$f_i$ is well defined.] 
    Let $F \in \mathsf{W}(P)$. Clearly, $f_i(F)$ is upward closed. Now, if $p, q \in f_i(F)$, there exist $r,s \in Q$ such that $p \vee r=q \vee s=1_Q$ and $i(r),i(s) \notin F$. Therefore, $i(r \vee s) \notin F $, by the primality of $F$ and, since $(p \wedge q) \vee r \vee s=1_Q$, we can conclude that $p \wedge q \in f_i(F)$. Clearly, $0_Q \notin f_i(F)$, hence $f_i(F)$ is a filter. 
    
    Now, we want to show that $f_i(F)$ is prime. Let $p \vee q \in f_i(F)$. Hence, let $r \in Q$ be such that $i(r) \notin F$ and $p \vee q \vee r=1_Q$. Since $F$ is prime and $i(p \vee q) \in F$, $i(p) \in F$ or $i(q) \in F$. If $i(p) \in F$ and $i(q) \notin F$, then $i(q \vee r) \notin F$, hence $p \in f_i(F)$. Similarly, if $i(q) \in F$ and $i(p) \notin F$, then $q \in f_i(F)$. Now, let $i(p),i(q) \in F$. By contradiction, assume $p,q \notin f_i(F)$. Since $Q$ is normal, there exist $s,t \in Q$ such that
    \begin{align*}
        p \vee s =1_Q && q \vee r \vee t=1_Q && s \wedge t=0_Q
    \end{align*}
    Hence $i(s)\in F$ -- otherwise, $p \in f_i(F)$. Similarly, $i(r \vee t) \in F$ and, thus, $i(t) \in F$ --since $i(r) \notin F$. However, this is absurd since $i(s) \wedge i(t)=0_P$.
    
    We are left to show minimality of $ f_i(F)$: assume $G\subseteq f_i(F)$ is a prime filter, given $q\in f_i(F)$, find $r\in Q$ with $i(r)\not\in F$ and $q\vee r=1_Q$. Then $r\not\in G$, else
$i(r)\in F$, since $G\subseteq f_i(F)$. By the primality of $G$, $q\in G$; hence $G\supseteq f_i(F)$ holds as well, and we are done.

  \item[$f_i$ is continuous.] Let $q\in Q$. We need to show  that $f_i^{-1}[\opencorn{Q}{q}]$ is open. Clearly, 
\[
f_i^{-1}[\opencorn{Q}{q}]=\bp{F \in \mathsf{W}(P) : \, i(q) \in F, i(r) \notin F \text{ and } q \vee r=1_Q \text{ for some } r \in Q}\] 

  Let $F \in f_i^{-1}[\opencorn{Q}{q}]$. Let $r$ be the witness that ensures $q$ being in $f_i(F)$. Since $Q$ is a normal lattice,  there exist $t,s \in Q$ such that \begin{align*}
        q \vee t = 1_Q && r \vee  s = 1_Q && s \wedge t =0_Q
    \end{align*}
    We claim  that $\opencorn{P}{i(s)}$ is an open neighborhood of $F$ contained in $f_i^{-1}[\opencorn{Q}{q}]$.
    
    Clearly, $F \in \opencorn{P}{i(s)} $, since $i(r) \notin F, i(r) \vee i(s)=1_P$ and $F$ is prime. Now, let $G \in \opencorn{P}{i(s)}$. Since $i(s) \in G$, $i(t)$ is not in $G$ and hence $i(q)\in G$, and $G \in f_i^{-1}[\opencorn{Q}{q}] $.

  \item[$f_i$ is closed.] It follows from the fact that every continuous map from a compact space to a compact Hausdorff space is closed.
    
\item[If $i$ is injective, then $ f_i$ is surjective.] Suppose $i$ is injective. Let $G \in \mathsf{W}(Q)$. Clearly, $i[G]$ is a prefilter on $P$. By an easy application of Zorn's Lemma, we can find a maximal  (and prime) filter $H$ on $P$ and a minimal prime filter $F$ on $P$ such that $i[G], F \subseteq H$. 
We prove by contradiction that $f_i(F)=G$: if they were different, by minimality of the filters, there exists $p \in f_i(F) \setminus G$ (else $G$ is not a minimal prime filter). Hence there exists $q \in Q$ such that $i(q) \notin F$ and $q \vee p=1_Q$. Since $Q$ is normal, there exists $s,t \in Q$ such that \begin{align*}
        p \vee t = 1_Q && q \vee  s = 1_Q && s \wedge t =0_Q
    \end{align*}
Since we are working with prime filters, $t \in G$ and $s \in f_i(F)$. Hence, $i(t), i(s) \in H$. This means that $0_P \in H$, a contradiction. 
\end{description}

\end{proof}

It is well known that the Stone-\v{C}ech compactification of a Tychonoff space is given by the space of its maximal filters of zero sets. However, it is possible to generalize this result to  arbitrary topological
  spaces, though at the expense of loosing the injectivity of the map $X \to \beta X$. For completeness, we sketch the proofs here.

\begin{definition}
\label{def:(co)zero-set}
    Let $(X, \tau)$ be a topological space. \begin{itemize}
        \item $C \subseteq X$ is a \emph{$0$-set} if there exists $f : X \to [0,1]$ continuous such that $f^{-1}[0] = C$; 
        \item $U \subseteq X$ is a \emph{cozero-set} if its complement is a $0$-set. 
    \end{itemize}
\end{definition}

\begin{lemma}
\label{lemma:cozerosetsarenormal}
    Let $(X,\tau)$ be a topological space. Then the set of cozero-sets of $X$ equipped with the inclusion is a normal distributive bounded lattice. 
\end{lemma}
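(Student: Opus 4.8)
The plan is to verify the two defining conditions of a normal lattice (Definition~\ref{def:normallattice}) directly from the explicit description of $0$-sets and cozero-sets in Definition~\ref{def:(co)zero-set}, separating out a preliminary lattice-theoretic step from the substantive separation argument.

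First I would establish the bounded distributive lattice structure. Since cozero-sets are by definition the complements of $0$-sets, it suffices to check that the $0$-sets form a bounded sublattice of the closed sets and then dualize. Given $0$-sets $A=f^{-1}[0]$ and $B=h^{-1}[0]$ with continuous $f,h\colon X\to[0,1]$, the product $f\cdot h$ (which again lands in $[0,1]$) witnesses $A\cup B=(f\cdot h)^{-1}[0]$, while $\tfrac12(f+h)$ witnesses $A\cap B=\bigl(\tfrac12(f+h)\bigr)^{-1}[0]$, using that $f,h\ge 0$ so that $fh=0$ iff $f=0$ or $h=0$, and $f+h=0$ iff $f=h=0$. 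The constant functions $1$ and $0$ show $\emptyset$ and $X$ are $0$-sets. Passing to complements, the cozero-sets are closed under finite unions and intersections and contain $\emptyset$ and $X$; being a family of subsets of $X$ closed under these operations, they form a bounded distributive lattice in which $\wedge=\cap$, $\vee=\cup$, $0=\emptyset$, and $1=X$.

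The substantive part is normality, i.e.\ condition~\ref{def:normallattice2}. Let $U,V$ be cozero-sets with $U\cup V=X$; then $A=X\setminus U$ and $B=X\setminus V$ are disjoint $0$-sets. I would invoke the classical separation of disjoint $0$-sets: writing $A=f^{-1}[0]$ and $B=h^{-1}[0]$, disjointness forces $f+h>0$ everywhere, so $g=f/(f+h)$ is a well-defined continuous map $X\to[0,1]$ with $g\equiv 0$ on $A$ and $g\equiv 1$ on $B$. Setting $R=\{x: g(x)<\tfrac12\}$ and $S=\{x: g(x)>\tfrac12\}$, each is a cozero-set: for instance $R$ is the complement of the $0$-set $\bigl(\max(\tfrac12-g,0)\bigr)^{-1}[0]$, and symmetrically for $S$. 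Then $R\cap S=\emptyset$, while $A\subseteq R$ gives $R\cup U=X$ and $B\subseteq S$ gives $S\cup V=X$, which is exactly the normality requirement.

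The only real obstacle is the separation lemma for disjoint $0$-sets, together with the bookkeeping that all the auxiliary functions ($f\cdot h$, $\tfrac12(f+h)$, $g$, and $\max(\tfrac12-g,0)$) stay inside $[0,1]$, so that the sets they produce genuinely are $0$-sets or cozero-sets. These points are routine once the normalizations are in place, and notably no appeal to normality of the space $(X,\tau)$ itself is needed: the quotient $f/(f+h)$ performs all the separation, which is why the conclusion holds for an \emph{arbitrary} topological space.
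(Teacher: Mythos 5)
Your proof is correct and complete. The paper itself gives no argument here---it merely cites an external reference (\cite[Prop.~D.3.4]{VIABOOKFORCING})---and what you have written is exactly the standard proof such a reference contains: closure of the $0$-sets under finite unions and intersections via $f\cdot h$ and $\tfrac12(f+h)$, and separation of disjoint $0$-sets by the quotient $g=f/(f+h)$, with $\max(\tfrac12-g,0)$ and $\max(g-\tfrac12,0)$ witnessing that $\{x:g(x)<\tfrac12\}$ and $\{x:g(x)>\tfrac12\}$ are cozero-sets, so that the containments $A\subseteq R$ and $B\subseteq S$ translate into $R\cup U=X$ and $S\cup V=X$ as Definition~\ref{def:normallattice} requires. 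Your closing remark is also the essential point: the quotient $f/(f+h)$ alone performs the separation, so no hypothesis on $(X,\tau)$ is needed and the lemma indeed holds for arbitrary spaces.
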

\begin{proof} See for example \cite[Proposition D.3.4]{VIABOOKFORCING}.
%
%
\end{proof}

\begin{proposition}
\label{prop:stonecechaasfiltersofcozerosets}
Let $(X,\tau)$ be a topological space. Let $i: (Q, \subseteq) \to (\tau, \subseteq)$ be the inclusion of the lattice of cozero sets $Q$ of $X$ into the lattice of the open sets of $X$.   Then, $(f_i, \mathsf{W}(Q), \tau_Q)$ is the weak Stone-\v{C}ech compactification of $X$, where $f_i$ is the map defined in Lemma \ref{lemma:morphism-norm-subfit}.
\end{proposition}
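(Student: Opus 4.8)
The plan is to check the two defining features of Definition \ref{def:stonecech} in turn: first that the triple is a weak Hausdorff compactification, and then the universal extension property. Since Lemma \ref{lemma:morphism-norm-subfit} produces a map $f_i:\corn\tau\to\corn Q$, the actual compactification map $X\to\mathsf W(Q)$ is the composite $\iota=f_i\circ\eta^\tau_X$ with the embedding of Proposition \ref{prop:unit}(1). Before anything else I would isolate one elementary separation fact and reuse it throughout: \emph{if $V$ is a cozero set of a space $Y$ with $V=\{g>0\}$ for some continuous $g:Y\to[0,1]$, and $y\in V$, then $W=\{g<g(y)/2\}$ is a cozero set with $V\cup W=Y$ and $y\notin W$}. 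Substituting $F^\tau_x$ into the formula defining $f_i$ and applying this fact should collapse it to $\iota(x)=F^Q_x=\{U\in Q:x\in U\}$, the filter of cozero neighborhoods of $x$.

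For the compactification half: $Q$ is a normal bounded distributive lattice by Lemma \ref{lemma:cozerosetsarenormal}, so $\mathsf W(Q)$ is compact Hausdorff by Theorem \ref{thm:compactT2}. The map $\iota$ is continuous as a composite of continuous maps, and I would check density of its image directly: every nonempty basic open $\opencorn Q q$ contains $\iota(x)$ for any $x\in q$. This gives a weak Hausdorff compactification.

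For the universal property, given a continuous $f:X\to K$ with $K$ compact Hausdorff, the plan is to route everything through the cozero lattice $Q_K$ of $K$. Since $K$ is Tychonoff, $Q_K$ is a base closed under finite unions and intersections, so Proposition \ref{prop:unit}(2) makes $\eta^{Q_K}_K:K\to\corn{Q_K}$ a homeomorphism, and $Q_K$ is normal by Lemma \ref{lemma:cozerosetsarenormal}. As $f^{-1}$ sends cozero sets to cozero sets and preserves the lattice structure, it restricts to a bounded lattice homomorphism $j:Q_K\to Q$; applying Lemma \ref{lemma:morphism-norm-subfit} to $j$ (its source $Q_K$ being normal) yields a continuous closed $f_j:\corn Q\to\corn{Q_K}$, and I would define $\beta f=(\eta^{Q_K}_K)^{-1}\circ f_j$.

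The final step is to verify $\beta f\circ\iota=f$ and uniqueness. For commutativity I would evaluate $f_j$ at $\iota(x)=F^Q_x$: since $j(V)=f^{-1}[V]\in F^Q_x$ exactly when $f(x)\in V$, the separation fact (now applied in $K$) should give $f_j(F^Q_x)=F^{Q_K}_{f(x)}=\eta^{Q_K}_K(f(x))$, hence $\beta f(\iota(x))=f(x)$. Uniqueness is then immediate, as two continuous maps into the Hausdorff space $K$ agreeing on the dense set $\iota[X]$ must coincide. I expect the only real obstacle to be bookkeeping rather than mathematics: keeping the source/target conventions of Lemma \ref{lemma:morphism-norm-subfit} straight (it requires the normal lattice as \emph{domain} of the morphism, so $f_i$ and $f_j$ run contravariantly), and confirming that the somewhat intricate formula defining these maps really does collapse to the neighborhood filters. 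All genuine topological content sits in the single separation fact, so once that is in hand the rest is formal.
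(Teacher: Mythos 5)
Your proof is essentially correct, but the treatment of the universal property is genuinely different from the paper's. The paper first identifies $\corn{Q}$ with the Wallman space of maximal filters of $0$-sets via Theorem \ref{thm:wallmancompactification-lattice}, and then proves the extension property \emph{topologically}: given $f:X\to K$, it picks a net $(x_C)_{C\in F}$ indexed by a maximal filter of $0$-sets, passes to a universal subnet, pushes it forward by $f$, and uses compactness of $K$ to define $\bar f(F)$ as the limit --- leaving well-definedness, continuity and uniqueness to the reader. You instead stay entirely on the lattice side: you factor $f$ through the bounded-lattice homomorphism $j=f^{-1}\restriction Q_K:Q_K\to Q$ between cozero lattices, apply Lemma \ref{lemma:morphism-norm-subfit} a second time to get $f_j:\corn{Q}\to\corn{Q_K}$, and compose with the homeomorphism $\eta^{Q_K}_K$ of Proposition \ref{prop:unit}(2). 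This buys you continuity of $\beta f$ for free, avoids nets altogether, and makes the verification $\beta f\circ\iota=f$ a one-line computation from your separation fact applied in $K$; it is arguably more complete than the paper's sketch. Your separation fact itself is correct (for $U=\{g>0\}\ni x$ take $W=\{\max(g(x)/2-g,0)>0\}$), and it is exactly what shows both that $f_i(F^\tau_x)$ collapses to $F^Q_x$ and that $F^Q_x$ is a \emph{minimal} prime filter of $Q$ (via Fact \ref{fact:idealsandfilters-1}).

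One caveat to fix: the statement is for an arbitrary topological space, but you define $\iota=f_i\circ\eta^\tau_X$ using Proposition \ref{prop:unit}(1), which is stated only for $T_1$-spaces. For a non-$T_1$ space $F^\tau_x$ need not be a minimal prime filter of $\tau$ (e.g.\ the neighbourhood filter of the open point of the Sierpi\'nski space), so $\eta^\tau_X$ does not land in $\mathsf{W}(\tau)$ and the composite is not literally available. The repair is already contained in your own argument: define $\iota(x)=F^Q_x$ directly (or note that the defining formula of $f_i$ produces a minimal prime filter of $Q$ when applied to \emph{any} proper prime filter of $\tau$, such as $F^\tau_x$); your separation fact gives minimality of $F^Q_x$, and $\iota^{-1}[\opencorn{Q}{q}]=q$ gives continuity. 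The paper's own phrasing ``$f_i$ sends each $x\in X$ to \dots'' glosses over the same point.
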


\begin{proof}
Observe that by Theorem \ref{thm:wallmancompactification-lattice}, the space $\corn{Q}$ is homeomorphic to the weak compactification of $X$ given by the Wallman space given by maximal filters of $0$-sets. The homeomorphism identifies every minimal prime filter $F$ of cozero-sets for $\tau$ with the maximal filter of $0$-sets for $\tau$ given by: 
\[
\bp{X \setminus U :U\in Q \setminus F }.
\]
Moreover, the map $f_i$ sends each $x \in X$ in the minimal prime filter of cozero-sets
\[ 
f_i(x) =\bp{U \in Q: x \in U, x \notin V, U \cup V = X \text{ for some } V \in Q},
\]
which is identified (via the homeomorphism) with the maximal filter of $0$-sets given by 
\[
\bp{C \ 0\text{-set for }\tau: x \in C \text{ or } C \cap D \neq \emptyset \text{ for all } D \ 0\text{-set for }\tau \text{ such that }x \in D}.
\]
Recall that the open sets in the Wallman compactification of $(X,\tau)$ relative to the base of the $0$-sets for $\tau$ are generated by the sets 
\[
\bp{F: \text{ there exists } C \in F \text{ such that } U \supseteq C}
\]
as $U$ ranges among the cozero-sets of $X$. 
\begin{description}[font=\normalfont\itshape]
\item[$\corn{Q}$ is compact Hausdorff.] It follows from Theorem \ref{thm:compactT2}.
\item[$f_i$ is a surjective continuous and closed map.] It follows from Lemma \ref{lemma:morphism-norm-subfit} as $i$ is clearly injective and its domain is a normal lattice.  
\item[ $(\mathsf{W}(Q), \tau_Q, f_i)$ satisfies the unique extension property.] We show that any continuous $f : X \to K$ with $K$ compact Hausdorff extends uniquely to a continuous $\bar{f} : \mathsf{W}(Q) \to K$ such that $\bar{f}|_X = f$. 

We assume the reader is familiar with the theory of convergence via nets as presented for example in \cite[Chapter 1]{pedersen1989analysis} (to which we conform our notation). The proof sketch below condenses in few lines the proof of the Stone-\v{C}ech compactification theorem of  a Tychonoff space presented in \cite[Thm. D.3.13]{VIABOOKFORCING}.

Let $F$ be a maximal filter of $0$-sets of $X$. Now, let $(x_C)_{C \in F}$ be a net such that $x_C \in C$ for all $C \in F$. 
By \cite[Theorem 1.3.8]{pedersen1989analysis} there is a universal subnet $(x_\lambda)_{\lambda\in\Lambda}$ of $(x_C)_{C \in F}$, i.e. a subnet which -for any $Y\subseteq X$- is either eventually in 
 in $Y$ or eventually in $X\setminus Y$.
Let $x \in X$, then, recalling the definition of $i:Q\to\tau$ and $f_i:X\to\bool{W}(Q)$,
\[
f_i(x)=\bp{Z \text{ } 0\text{-set}: x \in Z \text{ or } Z \cap D \neq \emptyset \text{ for all } D \ 0\text{-set} \text{ such that }x \in D}.
\]
A standard argument shows that $(f_i(x_\lambda))_{\lambda\in\Lambda}$ 
 converges to $F$. Since the image of any universal net under any function is again a universal net \cite[1.3.7]{pedersen1989analysis}, and  $K$ is compact Hausdorff, the image net $(f(x_\lambda))_{\lambda\in\Lambda}$ converges to some unique point $\bar{f}(F)$ (see \cite[Prop 1.5.2, Thm 1.6.2]{pedersen1989analysis}).

Now it can be shown that if $f:X\to K$ is continuous, $\bar{f}(F)$ does not depend on the choice of the net with values in $X$ converging to $F$ (we leave to the reader to check this property). In particular, $\bar{f}$ is well defined. The uniqueness and continuity of $\bar{f}$ follows from the surjectivity of $f_i$. Again we leave to the reader the details.  
\end{description}
\end{proof}

\begin{theorem}
\label{thm:stonecech-lattices}
   Every  complete, compact and subfit lattice contains a largest normal sublattice which is also complete, compact and subfit. 
\end{theorem}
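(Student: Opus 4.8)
The plan is to use the adjunction machinery already developed, most notably the duality of Theorem~\ref{thm:adj-dual}(\ref{thm:adj-dual-3}) between $\bool{Cpct-T_1}_\bool{closed}$ and $\bool{Cpl-Cpct-SbfL}_\bool{closed}$, together with the classical fact that a compact $T_1$-space has a canonical largest compact Hausdorff quotient. First I would let $P$ be a complete, compact and subfit lattice. By the duality, $P$ is (up to the isomorphism $\epsilon_P$ of Proposition~\ref{prop:counit}) the lattice of open sets $\tau$ of the compact $T_1$-space $X=\corn{P}$. Thus the problem translates into: inside $\tau$, identify the largest normal, complete, compact subfit sublattice. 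On the topological side, the natural candidate is the lattice of cozero-sets of $X$, which by Lemma~\ref{lemma:cozerosetsarenormal} is always normal and distributive; but since $X$ need not be Tychonoff I would instead work with the largest compact Hausdorff image of $X$.

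The key steps, in order, are as follows. (i) Form the weak Stone--\v{C}ech compactification $\beta X$ of $X$ as in Proposition~\ref{prop:stonecechaasfiltersofcozerosets}, realized as $\corn{Q}$ where $Q$ is the lattice of cozero-sets of $X$; by Theorem~\ref{thm:compactT2} and Proposition~\ref{prop:stonecechaasfiltersofcozerosets} this is compact Hausdorff and comes with a continuous closed surjection $f_i\colon X\to\beta X$. (ii) Its lattice of open sets $\tau_{\beta X}$ is, by Fact~\ref{fact:completecomplact}, complete, compact and subfit, and it is normal by Theorem~\ref{thm:compactT2} applied to the normal lattice $Q$ together with Proposition~\ref{prop:counit} (so $\tau_{\beta X}\cong\tau_Q$ and the latter is normal since $Q$ is). (iii) Pulling back along the closed continuous map $f_i$ gives, via the functor $\mathcal{L}$, a closed subfit morphism $k_{f_i}\colon\tau_{\beta X}\to\tau=P$; its image $N=\{f_i^{-1}[V]:V\in\tau_{\beta X}\}$ is a sublattice of $P$ isomorphic to $\tau_{\beta X}$ (injectivity because $f_i$ is surjective), hence $N$ is a normal, complete, compact, subfit sublattice of $P$. (iv) Finally I would prove maximality: if $N'$ is any normal (complete, compact) subfit sublattice of $P$, the inclusion $N'\hookrightarrow P$ dualizes under $\mathcal{R}$ and Lemma~\ref{lemma:morphism-norm-subfit} to a continuous closed surjection $X=\corn{P}\to\corn{N'}$ onto a compact Hausdorff space (Hausdorff by Theorem~\ref{thm:compactT2}, since $N'$ is normal), which therefore factors through the universal map $f_i\colon X\to\beta X$; dualizing the factorization shows $N'\subseteq N$.

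The main obstacle I expect is step~(iv), the maximality argument. The universal property of $\beta X$ in Definition~\ref{def:stonecech} is an extension property for maps \emph{out of} $X$ \emph{into} compact Hausdorff spaces, so I must reorganize it as a \emph{factorization} property for the quotient maps $X\to\corn{N'}$ and then transport this factorization across the contravariant functors $\mathcal{L}$ and $\mathcal{R}$ without losing track of which maps are closed subfit morphisms. Concretely, given a normal subfit $N'\subseteq P$, the induced map $f_i$-type surjection $\corn{P}\to\corn{N'}$ from Lemma~\ref{lemma:morphism-norm-subfit} lands in a compact Hausdorff space, so by the universal property of the weak Stone--\v{C}ech compactification it factors as $\corn{P}\to\beta(\corn{P})\to\corn{N'}$; applying $\mathcal{L}$ and using that $\mathcal{L}\circ\mathcal{R}^{\mathrm{op}}$ is naturally isomorphic to the identity on the relevant complete compact subfit lattices (Proposition~\ref{prop:counit}) yields $N'\subseteq N$ as sublattices of $P$. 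The delicate point is checking that the morphisms arising in this chain are genuinely \emph{closed subfit} (so that they live in the right categories), but this is exactly guaranteed by Propositions~\ref{prop:k_fmorphism} and~\ref{prop:counit}, so I expect the argument to close cleanly once the factorization is set up correctly.
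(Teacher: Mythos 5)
Your proposal is correct and follows essentially the same route as the paper's proof: identify $P$ with $\tau_{\mathsf{W}(P)}$ via the duality, realize the largest normal sublattice as the image of $\tau_{\beta\mathsf{W}(P)}$ under $k_{f_i}$ where $f_i$ is the weak Stone--\v{C}ech map of Proposition~\ref{prop:stonecechaasfiltersofcozerosets}, and obtain maximality by dualizing an arbitrary normal sublattice to a compact Hausdorff quotient via Lemma~\ref{lemma:morphism-norm-subfit} and factoring it through $\beta\mathsf{W}(P)$. The only cosmetic difference is that the paper runs the maximality step for an arbitrary normal sublattice (no subfitness or completeness assumed), which Lemma~\ref{lemma:morphism-norm-subfit} already permits.
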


\begin{proof}
    Let $P$ be a complete, compact and subfit lattice. By Theorem \ref{thm:adj-dual}, $P \cong \tau_{\mathsf{W}(P)}$. We feel free to identify $P$ and $ \tau_{\mathsf{W}(P)}$ in the sequel.
    
\begin{description}[font=\normalfont\itshape]

\item[Existence.] Let $Q$ be the normal sublattice of the cozero-sets of $\mathsf{W}(P)$. By Proposition \ref{prop:stonecechaasfiltersofcozerosets}, $(f_i, \mathsf{W}(Q), \tau_Q)$ is the weak Stone-\v{C}ech compactification of $\corn{P}$. In particular, the map $f_i: \mathsf{W}(P) \to \beta \mathsf{W}(P)$ is a surjective, closed and continuous map. By Theorem \ref{thm:adj-dual}, $k_{f_i}: \tau_{\beta \mathsf{W}(P)} \to P$ is an injective closed subfit morphism. Hence, $\tau_{\beta \mathsf{W}(P)}$ is a normal sublattice of $P$. 
        
        \item[Maximality.]
Assume $R$ is a normal sublattice of $P$. Hence, there exists an injective morphism of bounded distributive lattices $i: R \to P$. By Lemma \ref{lemma:morphism-norm-subfit}, $f_i: \mathsf{W}(P) \to \mathsf{W}(R)$ is a continuous and closed surjective map. 
        Moreover, $\mathsf{W}(R)$ is a compact Hausdorff space, by the normality of $R$. Hence, using the universal property of the weak Stone-\v{C}ech compactification, there exists a surjective, continuous -- and automatically closed -- map $\beta f_i :\beta \mathsf{W}(P) \to \mathsf{W}(R) $. 
        
        \[
\begin{tikzcd}
\mathsf{W}(P) \arrow[r,  "\iota"] \arrow[d, "f_i",two heads] & \beta \mathsf{W}(P) \arrow[dl,  "\beta f_i",two heads] \\
\mathsf{W}(R) &
\end{tikzcd}
\]
Then, by Theorem \ref{thm:adj-dual}, $\pi^*_{f_i}:\tau_{\mathsf{W}(R)} \to \tau_{\beta \mathsf{W}(P)} $ is an injective closed subfit morphism and, hence, $R$ is isomorphic to a sublattice of $\tau_{\beta \mathsf{W}(P)}$. 
        \end{description}
\end{proof}

Summing up the Wallman compactification of a $T_1$-space always projects onto its weak Stone-\v{C}ech compactification and overlaps with it when the space is normal.

\section{Connections with other duality results and a characterization of compactness for $T_1$-spaces}
\label{sec:relation with pre-existence literature}

In this section we relate the results of the present paper to those of a similar flavour we have been able to trace in the literature. We assume throughout familiarity of the reader with Isbell's duality between sober spaces and spatial frames and Stone's duality between spectral spaces and distributive lattices, and also on how the two compare to each other. The necessary minimal details on these dualities are in any case collected in the Appendix, our reference is \cite[Chapter 6]{gehrke2024topological}. 

We first compare with care (but only at the level of objects) the results of Section \ref{sec:adjunctions} to Stone and Isbell's duality. 

Next, we bring to light the connections with two other recently introduced dualities appearing respectively in \cite{bice2020wallmandualitysemilatticesubbases,maruyama}. We obtain as side results a nice lattice theoretic characterization of compactness and continuity for the class of $T_1$-spaces.

\subsection{The canonical embedding of the Wallman compactification of a topological space into its sobrification} \label{subsec:cornishvsisbell}
Let $(X, \tau)$ be a topological space. The \emph{specialisation order} of $\tau$ is the binary relation $\leq$ on $X$ defined by
\[
    x \leq y \iff \text{for every } U \in \tau, \text{ if } x \in U, \text{ then } y \in U.
\]
Clearly, the specialisation order of a \(T_1\) topological space is trivial. 

\begin{notation}
Let  \((X,\tau)\) be a topological space. 
\begin{itemize}
    \item \label{not:min} We denote by \(\bool{min}(X)\) the subspace of \(X\) consisting of the minimal points of $X$ with respect to the specialisation order.
    \item \label{not:sober} We denote by \(\bool{Sob}(X)\) the sober space \(\bool{Pt}(\tau)\) (where $\bool{Pt}(\tau)$ consists of the completely prime filters on the spatial frame $\tau$ topologized as expected, see Def. \ref{def:isbellpoint} for details).  \(\bool{Sob}(X)\) is the \emph{sobrification} of \(X\).
\end{itemize}
     
\end{notation}

If \(L\) is a distributive lattice then the specialisation order on \(\bool{St}(L)\) (where the latter is the family of prime filters on $L$ topologized as expected, see Def. \ref{def:stonedualspace} for details) 
is given on prime filters $x,y$ on $L$ by\footnote{See Def. \ref{def:stonedualspace} for more details, for $p\in L$ $\eta(p)$ is the family of prime filters to which $p$ belongs.} \begin{align*}
    x \leq y &\iff \text{for every } p \in L, \text{ if } x \in \eta(p), \text{ then } y \in \eta(p) \\ &\iff \text{for every } p \in L,  \ p \in x, \text{ then } p \in y \\  &\iff x \subseteq y,
\end{align*} 
hence (see also for details the latter part of the Appendix, and in particular Diagram \ref{diag:st-pt})
\[
\bool{W}(L) = \bool{min}(\mathrm{St}(L)) \cong \bool{min}(\bool{Pt}(\bool{Idl}(L))),
\]
where $\bool{Idl}(L)$ is the spatial frame given by the ideals on $L$.
\begin{lemma}
    \label{lemma:ideal-cmp&top}
    Let \(L\) be a basis for a topological space \((X,\tau)\) that is closed under finite unions and intersections. Then, the inclusion \[
i : L \to \tau
\]
extends to a surjective homomorphism of frames \[
j: \bool{Idl}(L) \to \tau
\]
given by \(j(I) = \bigcup I \). 
\end{lemma}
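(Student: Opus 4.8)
The plan is to take $j(I)=\bigcup I$ as the candidate map and to verify, in turn, that it is well defined, that it restricts to $i$ along the canonical embedding $\varepsilon\colon L\hookrightarrow\bool{Idl}(L)$, $p\mapsto\,\downarrow p$, that it is a frame homomorphism, and finally that it is surjective. The one observation that makes everything run smoothly is that, since $L$ is closed under finite unions and intersections and is regarded as a sublattice of $(\tau,\subseteq)$, its lattice operations \emph{literally} coincide with the set operations: $p\vee q=p\cup q$ and $p\wedge q=p\cap q$ for $p,q\in L$. Well-definedness is then immediate, as $\bigcup I$ is a union of open sets, hence open. The extension property is the computation $j(\downarrow p)=\bigcup\{q\in L:q\le p\}=p=i(p)$, using that $p$ is the largest element of $\downarrow p$ and that $q\le p$ means $q\subseteq p$.

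For the frame homomorphism I would check the three clauses. \emph{Bounds:} $j$ sends the least ideal $\downarrow 0_L$ to $\emptyset$ and, because $L$ is a base, sends the top ideal $L$ to $\bigcup L = X$. \emph{Finite meets:} recalling that meets in $\bool{Idl}(L)$ are intersections, I must show $\bigcup(I\cap J)=(\bigcup I)\cap(\bigcup J)$; the inclusion $\subseteq$ is trivial, while for $\supseteq$ a point lying in some $p\in I$ and some $q\in J$ lies in $p\cap q=p\wedge q$, which belongs to $I\cap J$ by downward closure, and hence to $\bigcup(I\cap J)$. \emph{Arbitrary joins} are the essential point: using the explicit description of $\bigvee_k I_k$ in the ideal frame as the set of elements of $L$ lying below some finite join of members of $\bigcup_k I_k$, a point of $j(\bigvee_k I_k)$ lies in some such $p\le p_1\vee\cdots\vee p_n=p_1\cup\cdots\cup p_n$ with each $p_\ell$ in some $I_{k_\ell}$, hence already in one of the $p_\ell$, and therefore in $\bigcup_k j(I_k)$; the reverse inclusion is monotonicity. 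Thus $j$ preserves arbitrary joins.

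For surjectivity I would, given $U\in\tau$, form $I_U=\{p\in L:p\subseteq U\}$. This is an ideal: it is downward closed, and closed under binary joins since $p,q\subseteq U$ forces $p\cup q=p\vee q\subseteq U$ with $p\cup q\in L$. As $L$ is a base, $\bigcup I_U=U$, so $j(I_U)=U$. The step demanding the most care is the join clause of the homomorphism check: frame maps must preserve \emph{arbitrary} (not merely finite) joins, and this is precisely where both the concrete formula for joins of ideals and the base property of $L$ intervene; the meet clause, by contrast, is exactly where closure of $L$ under finite intersections is used.
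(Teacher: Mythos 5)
Your proposal is correct and follows essentially the same route as the paper: the same explicit description of joins in $\bool{Idl}(L)$ drives the preservation of arbitrary joins, and surjectivity comes from the base property (you use the ideal $\{p\in L:p\subseteq U\}$ where the paper uses the join of the principal ideals $\downarrow U_\alpha$ for a cover $U=\bigcup_\alpha U_\alpha$, which is the same idea). You are in fact slightly more thorough than the paper in spelling out the preservation of bounds and finite meets, which the paper dismisses as obvious.
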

\begin{proof}
\(j\)  obviously preserves the order and finite meets. Moreover, \( j \) extends \( i \) via the identification of \( U \) with \( \downarrow U \) for any \( U \in L \).
To show that \(j\) preserves the arbitrary joins, recall that if \(\bp{I_\alpha}_{\alpha \in A} \subseteq \bool{Idl}(L)\) then for any \(U \in L\)
\[
U \in \bigvee_{\alpha \in A} I_\alpha \iff U \subseteq U_1 \cup \dots \cup U_n 
\text{ for some } U_1 \in I_{\alpha_1}, \dots, U_n \in  I_{\alpha_n}. \] 
Hence, for any \(x \in X\)
\begin{align*}
    x \in j (\bigvee_{\alpha \in A} I_\alpha) &\iff x \in \bigcup \cp{ \bigvee_{\alpha \in A} I_\alpha} \\ &\iff x \in U \quad \text{ for some } U \subseteq U_1 \cup \dots \cup U_n   \text{ with }U_1 \in I_{\alpha_1}, \dots, U_n \in  I_{\alpha_n} 
    \\
    &\iff x \in U_\alpha \quad \text{ for some } U_\alpha \in  I_{\alpha} 
    \\
    &\iff x \in \bigcup_{\alpha \in A} \cp{ \bigcup I_\alpha} \\
     &\iff x \in \bigcup_{\alpha \in A} j(I_\alpha)
\end{align*}
Now, let \(V \in \tau\). Since \(L\) is a basis for \(\tau\)
\[
V = \bigcup_{\alpha \in A} U_\alpha
\]
for some index set $A$ with \( U_\alpha \in L\) for all \(\alpha \in A\). Hence, \[j\cp{\bigvee_{\alpha \in A} \cp{\downarrow U_\alpha}} = \bigcup_{\alpha \in A} j(\downarrow U_\alpha)=  V\]
\end{proof}

The following lemma highlights the connection between the Wallman space and the  Stone dual space of a subfit lattice.

\begin{lemma}
\label{lemma:stonedual-subfit}
   Let $L$ be a subfit lattice. Then the inclusion map 
\[ 
   \iota: \mathsf{W}(L) \to \mathrm{St}(L)
   \] 
   is a topological embedding.
Moreover: 
   \begin{enumerate}
       \item \label{lemma:stonedual-subfit-1} if $L$ is normal, then $\iota(\mathsf{W}(L))$ is a retract of  $ \mathrm{St}(L)$;
       \item \label{lemma:stonedual-subfit-2} if $L$ is a Boolean algebra, then $\iota$ is a homeomorphism.
   \end{enumerate}
\end{lemma}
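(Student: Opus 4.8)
The plan is to handle the three assertions separately, all resting on one simple observation: the Wallman topology on $\mathsf{W}(L)$ and the subspace topology it inherits from $\mathrm{St}(L)$ are generated by literally the same basis. For the embedding, note that $\mathsf{W}(L)=\bool{min}(\mathrm{St}(L))$ is by definition a subset of $\mathrm{St}(L)$, so $\iota$ is the inclusion and is injective. For every $p\in L$ we have $\eta(p)\cap \mathsf{W}(L)=\{F\in\mathsf{W}(L):p\in F\}=\opencorn{L}{p}$, so the basic opens of the subspace topology are exactly the basic opens of the Wallman topology. Hence $\iota$ is continuous and a homeomorphism onto its image, i.e. a topological embedding. (This part uses neither subfitness nor the extra hypotheses of (1)--(2); it holds for any bounded distributive lattice.) For (2), if $L$ is Boolean I would show $\mathsf{W}(L)=\mathrm{St}(L)$ as sets: given any proper prime filter $F$ and any $p\in F$, the complement $\neg p$ satisfies $p\vee\neg p=1_L$ and $\neg p\notin F$, so by Fact \ref{fact:idealsandfilters-1} $F$ is already minimal prime. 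Thus every point of $\mathrm{St}(L)$ lies in $\mathsf{W}(L)$, and by the embedding part $\iota$ is a surjective embedding, hence a homeomorphism.

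The substantive case is (1), where I would build an explicit retraction. The key algebraic fact is that in a normal lattice every prime filter $x$ contains a \emph{unique} minimal prime filter. Existence is Zorn's lemma; for uniqueness, suppose $F_1\neq F_2$ are both minimal prime and contained in $x$, and pick $p\in F_1\setminus F_2$. By minimality of $F_1$ (Fact \ref{fact:idealsandfilters-1}) there is $q\notin F_1$ with $p\vee q=1_L$, and since $p\vee q=1_L\in F_2$ with $p\notin F_2$, primality forces $q\in F_2$. Applying normality (Definition \ref{def:normallattice}) to $p\vee q=1_L$ yields $s,t$ with $s\wedge t=0_L$, $s\vee p=1_L$, $t\vee q=1_L$; primality of $F_2$ (resp. $F_1$) then gives $s\in F_2\subseteq x$ and $t\in F_1\subseteq x$, so $0_L=s\wedge t\in x$, contradicting properness.

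I would then define $\rho\colon \mathrm{St}(L)\to\mathsf{W}(L)$ sending $x$ to this unique minimal prime filter below it; concretely $\rho(x)=\{q\in L: q\in x \text{ and } q\vee s=1_L \text{ for some } s\notin x\}$, which is exactly the formula $f_i$ of Lemma \ref{lemma:morphism-norm-subfit} applied to $i=\mathrm{id}_L$. On a minimal prime filter $F$, Fact \ref{fact:idealsandfilters-1} gives $\rho(F)=F$, so $\rho\circ\iota=\mathrm{id}$ and $\rho$ is a retraction onto $\iota(\mathsf{W}(L))$. For continuity I would reuse the argument in the proof of Lemma \ref{lemma:morphism-norm-subfit}: given $p\in\rho(x)$ with witness $s\notin x$, normality applied to $p\vee s=1_L$ produces $t,u$ with $p\vee t=1_L$, $s\vee u=1_L$, $t\wedge u=0_L$; then $u\in x$ (since $s\vee u=1_L\in x$, $s\notin x$), and $\eta(u)$ is a basic open neighborhood of $x$ contained in $\rho^{-1}[\opencorn{L}{p}]$ (for $y\in\eta(u)$ one gets $t\notin y$, hence $p\in y$ with $t$ witnessing $p\in\rho(y)$). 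This shows $\rho^{-1}[\opencorn{L}{p}]$ is open.

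The main obstacle is precisely this continuity step, together with one bookkeeping point: Lemma \ref{lemma:morphism-norm-subfit} is stated for $f_i\colon\mathsf{W}(P)\to\mathsf{W}(Q)$, i.e. with \emph{minimal} prime filters as inputs, whereas here I feed it arbitrary prime filters of $\mathrm{St}(L)$. I would therefore verify---as inspection of that lemma's proof confirms---that well-definedness, primality and minimality of the output, and the continuity computation use only that the input is a prime filter and that the target lattice is normal, so the construction applies verbatim to all of $\mathrm{St}(L)$.
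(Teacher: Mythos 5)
Your proposal is correct and follows essentially the same route as the paper: the same observation that the subspace and Wallman topologies share a basis, the same retraction formula $r(x)=\{p\in x: p\vee q=1_L \text{ for some } q\notin x\}$ obtained from Lemma \ref{lemma:morphism-norm-subfit} with $i=\mathrm{id}_L$, and the same appeal to primality implying minimality in the Boolean case. You merely fill in the well-definedness and continuity details that the paper explicitly leaves to the reader, and your check that the argument of Lemma \ref{lemma:morphism-norm-subfit} applies to arbitrary (not just minimal) prime filters is exactly the verification the paper is implicitly assuming.
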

\begin{proof}
Obviously, $\iota$ is a topological embedding, since the topology on $\mathsf{W}(L)$ is generated by the intersection with this space of the basic open sets generating the topology on $\mathrm{St}(L)$. 
\begin{description}[font=\normalfont \itshape]
    \item[\ref{lemma:stonedual-subfit-1}.] Suppose $L$ is normal. Let \begin{align*}
        r:  \mathrm{St}(L) &\to \mathsf{W}(L)\\
x &\mapsto r(x)   
    \end{align*}
    where for $x\in \mathrm{St}(L)$, $r(x)$ is given by
\[
    \bp{p \in x: p \vee q =1_P \mathrm{\ for \ some \ } q \notin x } 
    \]
    We leave to the readers to prove that \( r\) is well defined and continuous. The idea is to replicate the proof provided in Lemma \ref{lemma:morphism-norm-subfit} in the  case where \( i \) is the identity of $L$. Moreover, $r$ restricted to $\mathsf{W}(L)$ is the identity. 
    \item[\ref{lemma:stonedual-subfit-2}.] It follows from the fact that the prime filters on a Boolean algebra are exactly the ultrafilters on it. 
\end{description}
\end{proof}

Let \(L\) be a basis for a topological space \((X,\tau)\) closed under finite unions and intersections. Consider the surjective homomorphism of frames described in Lemma \ref{lemma:ideal-cmp&top}
\(
j: \bool{Idl}(L) \to \tau.
\)  
By applying Isbell’s duality, and by the commutativity of Diagram \ref{diag:st-pt} relating Isbell's and Stone's dualities, we obtain a continuous injective map from  
\(\bool{Sob}(X)\) to \(\bool{Pt}(\bool{Idl}(L)) = \bool{St}(L)\).  
Thus, we have the following inclusion:  
\begin{equation}
\label{eqn:subspaces}
    (X,\tau) \hookrightarrow \bool{Sob}(X) \hookrightarrow \bool{St}(L),
\end{equation}
where the first inclusion is given by the map that assigns to each point \(x \in X\) the point in \(\bool{Sob}(X)\) corresponding to the neighborhood filter of \(x\) and the second maps a completely prime filter on $\tau$ to its intersection with $L$. 

Now, suppose \(F\) is a compact frame. By Lemma \ref{lemma:minprimearecompprime}, any minimal prime filter on \(F\) is completely prime. Therefore, the minimal prime filter on \(F\) coincides with the minimal completely prime filters on \(F\).  
Hence, 
\[
\bool{W}(F)=  \bool{min}( \mathrm{St}(F))\cong \bool{min}( \bool{Pt}(F)).
\]  
Therefore, by Proposition \ref{prop:unit}, if \((X,\tau)\) is a compact  \(T_1\) space then \[
X \cong \bool{W}(\tau)  \cong  \bool{min}(\bool{Sob}(X)).\]
But, again by Proposition \ref{prop:unit}, \[
X \cong \bool{W}(L)  \cong  \bool{min}(\mathrm{St}(L))\]
for any base \(L\) that is closed under finite unions and intersections; hence
\[
\bool{min}(\bool{Sob}(X)) \cong X \cong \bool{min}(\mathrm{St}(L))\]
for any such base $L$. 
In summary, we have showed that any topological space \((X,\tau)\) can be embedded into its sobrification, which in turn can be embedded into the Stone space associated to any base \(L\) for \(\tau\) closed under finite unions and intersections. In the case of a compact space $(X,\tau)$, the three spaces $\bool{Sob}(X), \mathrm{St}(L)$, $X$ are not necessarily homeomorphic, however the subspace given by their minimal points according to the respective specialization order coincide, and is the Wallman compactification of the base $L$. Furthermore note that:
\begin{itemize}
\item 
for $L\neq L'$ distinct bases of $\tau$ both closed under finite unions and intersections, $\mathrm{St}(L)$ and $\mathrm{St}(L')$ are not homeomorphic: $L$ (respectively $L'$) is isomorphic to the compact open sets of the Stone topology on $\mathrm{St}(L)$ (respectively $\mathrm{St}(L')$), by the results of \cite[Section 6.1]{gehrke2024topological}.
\item
for   a $T_0$-space $(X,\tau)$ and  a base $L$ for $\tau$ closed under finite unions and intersections, $\bool{Sob}(X)$ may be a proper subspace of $\St(L)$. 
\end{itemize}
The examples to follow will give concrete instantiations of the above.

\subsubsection{Examples}
We conclude this first part of the Section with a few examples:
\begin{example} \( \)
    \label{ex:sober}\begin{itemize}
        \item    Let \((X,\tau)\) be \(\mathbb{N}\) equipped with the cofinite topology. It is easy to see that \(X\) is a compact \(T_1\)-space. However, it is not sober, since \(\mathbb{N}\) itself is an irreducible closed set that is not the closure of any point. As a result, \(X\) is a proper subset of its sobrification. 

The completely prime filters on \(\tau\) consist of the neighborhood filters of the elements of \(X\) along with the filter containing all non-empty open sets. Hence, \(\bool{Sob}(X)\) is isomorphic to the space \(\mathbb{N} \cup \bp{\infty}\), where the open neighborhood of \(\infty\) are of the union of \(\bp{+\infty}\) with a cofinite subsets of \(\mathbb{N}\).
Clearly, the minimal points of \(\bool{Sob}(X)\) with respect to its specialization order are precisely the natural numbers. Thus, as expected, we obtain  
\[
(X,\tau) \cong \bool{min}(\bool{Sob}(X)).
\]
\item \label{ex:stone} 

Let \((X, \tau)\) denote the closed interval \([0,1]\) equipped with the standard topology. Since \(X\) is sober, it follows that \(\bool{Sob}(X) = X\). 

Let \(L\) be the lattice generated by taking all the open intervals in \(X\) with rational endpoints and closing under finite unions and finite intersections. 

We now show that \(X\) is properly contained in \(\bool{St}(L)\). Let \(F\) be a prime filter on \(L\), and define:
\[
\bar{F} = \left\{ \bool{Cl}(U) : U \in F \right\}.
\]
This is a collection of closed subsets of \(X\) which clearly satisfies the finite intersection property. Therefore, there exists some point \(x \in X\) that belongs to all sets in \(\bar{F}\). Since \(X\) is a normal space and \(L\) forms a basis for the topology, such a point \(x\) is uniquely determined.

We aim to show that \(F^L_x \subseteq F\). 

Assume \(x \neq 0, 1\), and let \(U \in F^L_x\). Then there exist rational numbers \(a, a', b', b \in \mathbb{Q} \cap X\) such that \(a < a' < x < b' < b\), and the interval \(]a, b[ \in L\). Therefore, we have:
\[
[0,a'[ \cup ]a, b[ \cup ]b',1] = [0,1] \in F.
\]
Because \(F\) is a prime filter and \(x\) does not belong to either \([0,a']\) or \([b',1]\), it must be the case that \(]a,b[ \in F\), implying that \(U \in F\). The boundary cases \(x = 0\) or \(x = 1\) can be handled similarly.

We now analyze three cases:

\begin{enumerate}
    \item Assume \(x \notin \mathbb{Q}\). Let \(U \in F\). Then \(x \in \bool{Cl}(U)\). Since \(x\) is irrational and \(L\) is generated from intervals with rational endpoints, it follows that \(x \in U\). Hence, \(F = F^L_x\).
    
    \item Let \(q \in \mathbb{Q} \cap ]0,1[\). Then it is clear that the filters
    \begin{align*}
    F_q^- &= \left\{ U \in L \mid \text{for some } q' \in \mathbb{Q} \cap [0,1] \text{ with } q < q', \text{ we have } ]q, q'[ \subseteq U \right\}, \\
    F_q^+ &= \left\{ U \in L \mid \text{for some } q' \in \mathbb{Q} \cap [0,1] \text{ with } q' < q, \text{ we have } ]q', q[ \subseteq U \right\}
    \end{align*}
    are both prime filters. It is straightforward to prove that they are maximal prime filters. 
    We now prove that the only prime filters properly containing \(F_q^L\) are \(F_q^-, F_q^+\). Assume \(F\) is a prime filter such that \(F_q^L \subsetneq F\). Then there exists \(U \in F\) such that \(q \notin U\). Since \(F^L_q \subseteq F\), there exists \(q' \in \mathbb{Q} \cap [0,1]\) such that either \(q' < q\) or \(q < q'\), along with sets \(U_1, U_2 \in L\) disjoint from the interval \(]q', q[\) or \(]q, q'[\), respectively. Suppose \(q'< q\). The other case is similar. \(U\) can be written as \(U = U_1 \cup ]q', q[ \cup U_2\).
    By the primality of \(F\), it follows that  \(]q', q[ \in F\). Now take any \(p \in \mathbb{Q} \cap [0,1]\) with \(p < q\). If \(p \leq q'\), then \(]q', q[ \subseteq ]p, q[\), and so \(]p, q[ \in F\). If instead \(q' < p\), choose \(p' \in \mathbb{Q}\) such that \(p < p' < q\). Then
    \[
    ]q', p'[ \cup ]p, q[ = ]q', q[ \in F,
    \]
    and since \(q \notin [q', p'] \), we again conclude that \(]p, q[ \in F\), as required. Hence, \(F=F_q^+\). 
    
    \item If \(q = 0\), the argument is analogous to the previous case: the only prime filters that contain the neighbourhood filter of \(0\) with respect to \(L\) are \(F^L_0\) and \(F^+_0\). The case \(q = 1\) is entirely similar.
\end{enumerate}

Thus, we have shown that the points of the Stone space \(\bool{St}(L)\) are given by:
\[
\left\{ r \in [0,1] \mid r \notin \mathbb{Q} \right\} \cup \left\{ q^-, q^+, q \mid q \in \mathbb{Q} \cap ]0,1[ \right\} \cup \left\{ 0, 0^+, 1, 1^- \right\}.
\]

Therefore, we conclude that \(X\) is a proper subspace of the Stone dual space \(\bool{St}(L)\); however, as expected, it is homeomorphic to the subspace of minimal points of \(\bool{St}(L)\).

\end{itemize}
\end{example}
\subsection{Relating the duality of Section \ref{sec:adjunctions} with Maruyama's duality for $T_1$-spaces}

We give an alternative presentation of a duality Maruyama introduced for $T_1$-spaces (see \cite{maruyama}) and relate it to the results of Section \ref{sec:adjunctions}.
\begin{definition}\label{def:framecomplprimejoincomplete}

A \emph{coatom} $p$ of a bounded lattice $P$ is an element  such that \(p < 1_P\) and for no $x\in P$
\[ p < x < 1_P. \]
$P$ is \emph{coatomic} if for all $q< 1_P$ in $P$ there is some coatom $p\geq q$.
\end{definition}

Note that $T_1$-spaces have a coatomic topology, as the complement of a point is a coatom of it. Furthermore note that in any complete lattice the join-complete ideals are generated by the supremum of themselves, and the maximal ones are generated by a coatom.



\begin{lemma}
\label{lemma:ideal&coatom}
    Let \(I\) be a join-complete maximal ideal  on a complete lattice \(P\). Then \(I\) is generated by a coatom. 
\end{lemma}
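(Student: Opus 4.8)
The plan is to exploit join-completeness to show that $I$ is \emph{principal}, and then use maximality to force its generator to be a coatom. First I would set $a=\bigvee I$, the supremum computed in the complete lattice $P$. Since $I$ is join-complete, this supremum lies in $I$; and since $I$ is an ideal it is downward closed, so $\downarrow a\subseteq I$ while every element of $I$ is $\le a$. Hence $I=\downarrow a$, the principal ideal generated by $a$. Because $I$ is a proper ideal we have $1_P\notin I$, and therefore $a<1_P$.

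It then remains only to verify that $a$ is a coatom, i.e. that there is no $x$ with $a<x<1_P$. Suppose toward a contradiction that such an $x$ exists. Then $\downarrow x$ is an ideal of $P$: it is downward closed by definition, and closed under finite joins since $y,z\le x$ implies $y\vee z\le x$. It is proper, as $x<1_P$ gives $1_P\notin\downarrow x$, and it properly contains $I$, since $a<x$ forces $x\notin\downarrow a=I$ whereas $x\in\downarrow x$. This contradicts the maximality of $I$ among proper ideals. Consequently no such $x$ exists, $a$ is a coatom, and $I=\downarrow a$ is generated by it.

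The argument is essentially routine, and I do not expect a genuine obstacle. The only step that uses the full strength of the hypotheses is the passage to a principal ideal: without join-completeness the supremum $\bigvee I$ need not belong to $I$, so $I$ need not be of the form $\downarrow a$, and maximality alone would not pin down a single generating element. Once principality is established, maximality upgrades the generator from ``$<1_P$'' to ``coatom'' by the one-step extension argument above.
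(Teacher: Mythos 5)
Your proof is correct and follows essentially the same route as the paper: set $a=\bigvee I$, use join-completeness to get $a\in I$ and hence $I=\downarrow a$, then use maximality to see that $a$ is a coatom. (Your final step, exhibiting $\downarrow x$ as a larger proper ideal, is a slightly more direct way of invoking maximality than the paper's, which instead uses that a maximal ideal meets every $r\notin I$ in an element $s$ with $s\vee r=1_P$; both are one-line arguments and the difference is cosmetic.)
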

\begin{proof}
    Let \(q=\bigvee I\). Since \(I\) is join-complete, \(q \in I\) and therefore \(I=\downarrow q\) by maximality of \(I\). Now, we want to show that \(q \) is a coatom. Suppose there exists \(r \in P\) such that \(q < r < 1_P\). Clearly, \(r \notin I\). Since \(I\) is a maximal ideal, it means that there exists \(s \in I\) such that \(s \vee r=1_P\) -- absurd, since \(s \leq q\). 
\end{proof}
It is also convenient to give another characterization of minimal prime filters thare are completely prime. 

\begin{corollary}
\label{cor:minprimcompprime&coatom}
    Let \( F \) be a minimal prime filter that is completely prime on a complete lattice \(P\). Then there exists a coatom \(q \in P\) such that  \[ F = \{r \in P: r \not \leq q \} \]
\end{corollary}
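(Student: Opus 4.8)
The plan is to pass to the complement ideal $F^C = P \setminus F$, recognize it as a principal ideal generated by a coatom, and then take complements. First I would invoke Corollary \ref{cor:idealsandfilters}: since $F$ is a minimal prime filter, its complement $F^C$ is a maximal ideal of $P$. This is the ``maximality'' half of the hypothesis needed to apply Lemma \ref{lemma:ideal&coatom}.

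Next, the key step is to show that $F^C$ is \emph{join-complete}, and this is exactly where the complete primality of $F$ is used. Suppose $\bp{p_j}_{j \in J} \subseteq F^C$, i.e. $p_j \notin F$ for every $j \in J$. If $\bigvee_{j \in J} p_j$ were in $F$, then by complete primality of $F$ some $p_j$ would lie in $F$, a contradiction. Hence $\bigvee_{j \in J} p_j \notin F$, that is, $\bigvee_{j \in J} p_j \in F^C$. Since $F^C$ is already downward closed and closed under finite joins (being an ideal), this shows it is closed under arbitrary joins, i.e. join-complete.

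Then, because $P$ is complete and $F^C$ is a join-complete maximal ideal, Lemma \ref{lemma:ideal&coatom} applies and yields a coatom $q \in P$ with $F^C = \downarrow q = \bp{r \in P : r \leq q}$. Taking complements gives $F = P \setminus F^C = \bp{r \in P : r \not\leq q}$, which is the desired conclusion. The only point requiring any thought is the second step: observing that complete primality of the filter is precisely what upgrades the ordinary (finite-join-closed) maximal ideal $F^C$ to the join-complete ideal demanded by the hypotheses of Lemma \ref{lemma:ideal&coatom}; the remaining steps are routine complementation bookkeeping assembled from the cited results.
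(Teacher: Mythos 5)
Your proof is correct and follows exactly the route the paper takes (the paper's proof is just the one-line citation of Lemma \ref{lemma:ideal&coatom} and Corollary \ref{cor:idealsandfilters}); you have merely made explicit the implicit step that complete primality of $F$ is what makes $F^C$ join-complete.
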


\begin{proof}
It follows from Lemma \ref{lemma:ideal&coatom} and Corollary \ref{cor:idealsandfilters}. 
\end{proof}

Let us now recall the relevant portions of Maruyama's article \cite{maruyama}.

%
%
%

\begin{definition}(Maruyama \cite[Def. 2.2]{maruyama})\label{def:m-spatial}
A frame \( L \) is \emph{m-spatial} if for any \( a, b \in L \) with \( a \not\leq b \), there is a maximal
join-complete ideal \( M \) of \( L \) such that \( a \notin M \) and \( b\in M \).
%
%

\label{def:m-homomorphism}
A frame homomorphism \( f : L \to L' \) between frames \( L \) and \( L' \) is called an \emph{m-homomorphism}
if for any maximal join-complete ideal \( M \) of \( L' \), \( f^{-1}[M] \) is a maximal join-complete ideal of \( L \).
\end{definition}

Observe that the complement of a maximal join-complete ideal is a minimal prime filter that is completely prime by Corollary \ref{cor:idealsandfilters}. Hence, requiring a frame to be m-spatial is equivalent to requiring that the family of its minimal prime filters that are completely prime is large. 

Maruyama's duality identifies an m-spatial frame $P$ with the $T_1$-space of its maximal join-complete ideals with topology generated by the sets 
\[
[p]=\bp{I: p\not\in I,\, I\text{ join-complete maximal ideal on } P}.
\] 
We recast below this duality according to a terminology which is more in line with the current paper.

\begin{notation}
\emph{}
\begin{itemize}
\item
    \label{not:t1}
    $\bool{T}_1$ denotes the category of $T_1$-spaces with \emph{arbitrary} continuous maps.
    
    \item
    \label{not:mfrm}
    $\bool{mSpFrm}$ denotes the category of m-spatial frames with m-homomorphisms.
\end{itemize}
\end{notation}

\begin{notation}
    \label{not:complety prime minimal filters}
    Let $L$ be a frame. We denote with $(M(L), \tau_L)$ the space $(\aaa, \tau^\aaa_L)$ where \[\aaa=\bp{F: F \mathrm{\ is \ a \ minimal \ prime \ filter \ that \ is \ completely \ prime}}\]
\end{notation}
\begin{notation}
\label{not:functors-maruyama}
Let

\begin{align*}
\mathcal{F}: & \quad \bool{T}_1\to \bool{mSpFrm} \\
& \quad (X, \tau) \mapsto (\tau, \subseteq) \\
& \quad (f: (X, \tau) \to (Y, \sigma)) \mapsto (k_f: (\sigma, \subseteq) \to (\tau, \subseteq)),
\\ 
\\
\mathcal{G}: & \quad \bool{mSpFrm}\to \bool{T}_1 \\
& \quad (L, \leq) \mapsto (M(L), \tau_L) \\
& \quad (i : L \to L') \mapsto (\pi^*_i:M(L') \to M(L)),
\end{align*}
where $\pi^*_i(F)=i^{-1}[F]$ for any minimal prime filter that is completely prime  $F$ on $L'$.
\end{notation}

\begin{theorem}(Maruyama \cite[Thm. 4.6]{maruyama}) 
\label{thm:Maruyama duality} $ $
    The functors $\mathcal{F}$ and $\mathcal{G}$ implement a duality between $\bool{T}_1$ and $\bool{mSpFrm}$.
\end{theorem}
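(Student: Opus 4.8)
The plan is to verify that $\mathcal{F}$ and $\mathcal{G}$ are well-defined contravariant functors and then to exhibit natural isomorphisms $\eta:\bool{1}_{\bool{T}_1}\to\mathcal{G}\circ\mathcal{F}^{\mathrm{op}}$ and $\epsilon:\bool{1}_{\bool{mSpFrm}}\to\mathcal{F}\circ\mathcal{G}^{\mathrm{op}}$, which serve as unit and counit for the dual equivalence. Everything rests on a single dictionary. By Corollary \ref{cor:minprimcompprime&coatom}, on a complete lattice a minimal prime completely prime filter $F$ is exactly the complement of the principal ideal generated by a coatom, and its complement $F^c$ is a maximal join-complete ideal (recall a filter is completely prime precisely when its complementary ideal is join-complete). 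In a $T_1$-space the coatoms of $\tau$ are exactly the complements $X\setminus\bp{x}$ of points: if $X\setminus U$ contained distinct $x\neq y$, then $U\subsetneq X\setminus\bp{y}\subsetneq X$, so $U$ could not be a coatom. Hence the points of $X$, the coatoms of $\tau$, the maximal join-complete ideals $I_x=\bp{U\in\tau:x\notin U}$, and the minimal prime completely prime filters $F_x=\bp{U\in\tau:x\in U}=I_x^c$ are all in canonical bijection.

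First I would check well-definedness on objects and arrows. For $\mathcal{F}$: if $(X,\tau)$ is $T_1$ and $U\not\subseteq V$, picking $x\in U\setminus V$ gives a maximal join-complete ideal $I_x$ with $U\notin I_x$ and $V\in I_x$, so $\tau$ is m-spatial. For continuous $f:X\to Y$, the map $k_f$ is a frame homomorphism and the computation $k_f^{-1}[I_x]=\bp{V\in\sigma:x\notin f^{-1}[V]}=I_{f(x)}$ shows it carries maximal join-complete ideals to maximal join-complete ideals, i.e. $k_f$ is an m-homomorphism. For $\mathcal{G}$: $M(L)$ is an antichain of filters, so $(M(L),\tau_L)$ is $T_1$ by Proposition \ref{prop:separationproperties}(\ref{prop:separationproperties-4}), and m-spatiality makes this family large, hence dense. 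Given an m-homomorphism $i:L\to L'$ and $F\in M(L')$, the set $i^{-1}[F^c]=(i^{-1}[F])^c$ is, by the very definition of m-homomorphism, a maximal join-complete ideal of $L$; thus $\pi^*_i(F)=i^{-1}[F]$ is again a minimal prime completely prime filter, and continuity follows from $(\pi^*_i)^{-1}[\bp{F:p\in F}]=\bp{G:i(p)\in G}$.

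Next I would produce the unit and counit. Set $\eta_X(x)=F_x$; by the dictionary this lands in $M(\tau)$ and is a bijection onto it (surjectivity being exactly the statement that every minimal prime completely prime filter is some $F_x$), while openness, continuity, and the $T_0$ hypothesis make it a homeomorphism via Fact \ref{fact:omeomspacespreorders}. Set $\epsilon_L(p)=\bp{F\in M(L):p\in F}$. Upward- and meet-closure together with primality give that $\epsilon_L$ is a bounded lattice homomorphism; complete primality of the points of $M(L)$ gives $\epsilon_L(\bigvee_i p_i)=\bigcup_i\epsilon_L(p_i)$, so $\epsilon_L$ is a frame homomorphism; m-spatiality gives injectivity; and, using that a frame is a complete lattice, every open subset of $M(L)$, being a union $\bigcup_i\epsilon_L(p_i)=\epsilon_L(\bigvee_i p_i)$, is itself basic, which yields surjectivity. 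Thus $\epsilon_L$ is a frame isomorphism. Naturality of both transformations is then routine: for $\eta$ it reduces to $\pi^*_{k_f}(F_x)=k_f^{-1}[F_x]=F_{f(x)}$, and $\epsilon$ is dual.

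The main obstacle I expect is the surjectivity of the counit, i.e. showing that the topology $\tau_L$ of $M(L)$ consists \emph{precisely} of the sets $\epsilon_L(p)$. This is where both the completeness of the frame and the complete primality of the chosen filters are indispensable, and it is exactly the point at which restricting to minimal prime \emph{completely} prime filters (rather than to arbitrary minimal prime filters, as in the Wallman construction of Section \ref{sec:adjunctions}) becomes essential: for non-compact $L$ the two families genuinely differ, and only the former recovers $L$ as its own topology. The closely related delicate point is that the m-homomorphism condition is precisely what is needed to keep $\pi^*_i$ inside $M(\cdot)$; once the coatom/maximal-join-complete-ideal dictionary above is in place, both points become transparent, and the remaining verifications are the routine lattice- and point-set computations indicated.
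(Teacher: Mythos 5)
Your argument is correct, but note that the paper does not prove this statement at all: it is imported verbatim as Maruyama's Theorem 4.6 and only recast in the paper's notation, so there is no internal proof to compare against. What you have written is a self-contained derivation that is fully consistent with the surrounding machinery the paper \emph{does} develop --- your central ``dictionary'' (points of a $T_1$-space $\leftrightarrow$ coatoms of $\tau$ $\leftrightarrow$ maximal join-complete ideals $\leftrightarrow$ minimal prime completely prime filters) is exactly the content of Lemma \ref{lemma:ideal&coatom}, Corollary \ref{cor:minprimcompprime&coatom} and the discussion around Proposition \ref{prop:mspatial&stronglysubfit}, and your unit/counit are the expected $x\mapsto F_x$ and $p\mapsto \opencorn{}{p}$. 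The steps you leave implicit are genuinely routine: that a bijective frame homomorphism between m-spatial frames is an isomorphism in $\bool{mSpFrm}$ (its inverse carries maximal join-complete ideals to maximal join-complete ideals, hence is again an m-homomorphism), and functoriality of $\mathcal{F}$ and $\mathcal{G}$ on composites. One small point worth making explicit in the dictionary is the converse of Corollary \ref{cor:minprimcompprime&coatom}, namely that for any coatom $q$ the set $\bp{r: r\not\leq q}$ really is a minimal prime completely prime filter; closure under meets follows from distributivity via $(r_1\wedge r_2)\vee q=(r_1\vee q)\wedge(r_2\vee q)=1$, minimality from the maximality of $\downarrow q$ together with Corollary \ref{cor:idealsandfilters}, and complete primality from the join-completeness of $\downarrow q$. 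With that supplied, your proof is complete.
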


We now compare this duality with the ones presented in the previous section.
Firstly, we give alternative frame theoretic characterizations of m-spatial frames and
m-homomorphisms.
\begin{definition}
    \label{def:stronglysubfit}
A bounded distributive lattice \(L\) is \emph{strongly subfit} if for every distinct \(p,q \in L\)  there exists a  coatom \(r \in L\) such that \(p \vee r=1_L\) and \(q \vee r\neq 1_L\) or viceversa. 
\end{definition}
Trivially a strongly subfit lattice is always subfit. Moreover, it is also coatomic. Indeed, for any \(p \in L\) there exists a coatom \(r \) such that \(1_L \vee r=1_L\) and \(r \vee p \neq 1_L\). Hence, \(p \leq r\).

\begin{proposition}
\label{prop:mspatial&stronglysubfit}
A frame $L$ is m-spatial if and only if it is strongly subfit.
\end{proposition}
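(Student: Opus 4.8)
The plan is to route the whole equivalence through the correspondence between maximal join-complete ideals and coatoms provided by Lemma \ref{lemma:ideal&coatom}, and then observe that both conditions become statements purely about coatoms and their joins with $a,b$. Once translated, the equivalence reduces to the same distributive-lattice manipulation that upgrades Definition \ref{def:subfit} to its order-theoretic form in Remark \ref{rem:subfitness}.

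For the implication from m-spatiality to strong subfitness, I would take distinct $p,q\in L$ and relabel so that $p\not\leq q$ (possible since $p\neq q$). Applying m-spatiality to the pair $(p,q)$ yields a maximal join-complete ideal $M$ with $p\notin M$ and $q\in M$, and by Lemma \ref{lemma:ideal&coatom} we have $M=\downarrow r$ for a coatom $r$. Then $q\leq r$ gives $q\vee r=r\neq 1_L$, while $p\not\leq r$ gives $p\vee r>r$, and coatomicity of $r$ forces $p\vee r=1_L$. This $r$ is exactly the witness required by Definition \ref{def:stronglysubfit}.

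For the converse I would first record the reading of Lemma \ref{lemma:ideal&coatom} in the opposite direction: a principal ideal $\downarrow r$ generated by a coatom $r$ is always a maximal join-complete ideal, being join-complete (if $x_i\leq r$ for all $i$ then $\bigvee_i x_i\leq r$) and maximal (any $x\not\leq r$ satisfies $x\vee r=1_L$ by coatomicity, so no proper ideal properly contains $\downarrow r$); alternatively this can be read off Corollary \ref{cor:minprimcompprime&coatom}. The substantive step is then to strengthen Definition \ref{def:stronglysubfit} to: for $a\not\leq b$ there is a coatom $r$ with $a\vee r=1_L$ and $b\vee r\neq 1_L$. I would obtain this exactly as in Remark \ref{rem:subfitness}: put $a'=a\wedge b<a$, apply strong subfitness to the distinct pair $a',a$ to get a coatom $r$; since $a'<a$, the disjunct $a'\vee r=1_L$ would force $a\vee r=1_L$ as well, so the only viable case is $a\vee r=1_L$ with $a'\vee r\neq 1_L$, and distributivity gives $b\vee r=(b\vee r)\wedge(a\vee r)=(a\wedge b)\vee r=a'\vee r\neq 1_L$. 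With this $r$, the ideal $M=\downarrow r$ is a maximal join-complete ideal with $b\leq r$ (so $b\in M$) and $a\not\leq r$ (so $a\notin M$), which witnesses m-spatiality of $L$.

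The hard part will be precisely this last strengthening. The raw definition of strong subfitness carries an ``or vice versa'' ambiguity, and in the unfavourable case it would hand back a coatom $r$ with $a\leq r$ and $b\not\leq r$, producing an ideal $\downarrow r$ that contains $a$ rather than $b$ --- the wrong orientation for m-spatiality. Forcing the correct orientation is what requires the distributive computation of Remark \ref{rem:subfitness}, and this is the only place where distributivity (and the lattice structure of the frame) is genuinely used. The remaining verifications --- join-completeness and maximality of $\downarrow r$, and the translation between ``$b\vee r\neq 1_L$'' and ``$b\leq r$'' --- are routine.
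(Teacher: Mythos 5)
Your proof is correct and follows essentially the same route as the paper's: translate maximal join-complete ideals into coatoms via Lemma \ref{lemma:ideal&coatom}, and resolve the ``or vice versa'' in the converse direction by the distributivity computation of Remark \ref{rem:subfitness} applied to the pair $(a\wedge b, a)$. If anything, you are more explicit than the paper about the two points it glosses over, namely that $\downarrow r$ for a coatom $r$ is indeed a maximal join-complete ideal (the unstated converse of Lemma \ref{lemma:ideal&coatom}) and that the unfavourable disjunct of Definition \ref{def:stronglysubfit} is ruled out because $a'<a$.
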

\begin{proof} \( \)
\begin{description}
    \item[\((\Rightarrow).\)] Let \(L\) be a m-spatial frame. Let \(p,q\) distinct elements of \(L\). Suppose \(p \not \leq q\). Then, there exists a maximal join-complete ideal \(M\) such that \(p \not \in M\) and \(q \in M\). By Lemma \ref{lemma:ideal&coatom}, there exists a coatom \(r \in L\) such that \(M=\downarrow r\). Therefore, \(q \vee r = r\) and \(p \vee r\neq 1_L\), since \(p \notin M\). Hence, \(L\) is strongly subfit. 
    \item[\((\Leftarrow).\)] Let \(p \not \leq q\) be elements of \(L\). 
Since $ p \wedge q \neq p$ and \(L\) is strongly subfit, there exists a  coatom $r$ such that
\((p \wedge q) \vee r\neq  1_L \text{ and } p \vee r = 1_L
\).
Hence,
\[
q \vee r = (q \vee r) \wedge 1_L = (q \vee r) \wedge (p \vee r)
= (q \wedge p) \vee r \neq 1_L.
\]   
 By Lemma \ref{lemma:ideal&coatom}, \(\downarrow r  \) is a join-complete maximal ideal. Clearly, \(p \notin \downarrow r  \) and \(q \in \downarrow r \). Hence, \(L\) is m-spatial. 
\end{description}
\end{proof}

The previous proposition and  Maruyama's duality  allow for the following characterization of $T_1$-spaces: 
\begin{corollary}
    Let \((X,\tau)\) be a topological space. Then it is \(T_1\) if and only if \(\tau\) is a strongly subfit frame.
\end{corollary}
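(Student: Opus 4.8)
The plan is to reduce the statement to m-spatiality and then treat the two implications with the tools of the preceding subsection. Since the topology $\tau$ of any space is automatically a (spatial) frame, Proposition \ref{prop:mspatial&stronglysubfit} lets me replace ``$\tau$ is strongly subfit'' by ``$\tau$ is m-spatial''; the corollary thus becomes the assertion that $(X,\tau)$ is $T_1$ if and only if $\tau$ is m-spatial.

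For the forward implication I would argue directly and avoid any duality. If $(X,\tau)$ is $T_1$, then for each $x\in X$ the complement $X\setminus\{x\}$ is open and is a \emph{coatom} of $\tau$: any open set strictly containing it already contains $x$ and hence equals $X$. Given distinct $p,q\in\tau$ I may assume there is a point $x\in p\setminus q$; then $r=X\setminus\{x\}$ is a coatom with $p\vee r=1_\tau$ (as $x\in p$) and $q\vee r\neq 1_\tau$ (as $x\notin q\cup r$), which is exactly strong subfitness. Equivalently, this is immediate from Maruyama's Theorem \ref{thm:Maruyama duality}, whose functor $\mathcal{F}$ lands every $T_1$-space in the category of m-spatial frames.

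For the converse the guiding dictionary is the one between coatoms and points. By Lemma \ref{lemma:ideal&coatom} and Corollary \ref{cor:minprimcompprime&coatom} the maximal join-complete ideals of the frame $\tau$ are exactly the ${\downarrow}r$ for coatoms $r$, and $X\setminus r$ is a minimal nonempty closed set. The crucial observation is that a minimal nonempty closed set is the closure of each of its points, so as soon as distinct points of $X$ have distinct neighbourhood filters such a set is forced to be a \emph{closed singleton}. Reading strong subfitness through this dictionary, it says precisely that the closed points of $X$ separate its open sets. A point $y$ that is not closed would then furnish two distinct open sets with the same trace on the closed points --- exactly the phenomenon isolated by the specialisation-order analysis of \ref{subsec:cornishvsisbell} --- contradicting strong subfitness; this yields $T_1$.

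The step I expect to be the real obstacle is that $\tau$, regarded as an abstract frame, cannot see topological indistinguishability of points, so strong subfitness/m-spatiality pins down $(X,\tau)$ only up to its $T_0$-reflection (already the two-element frame is strongly subfit yet is the topology of the non-$T_1$ indiscrete two-point space). Consequently the converse genuinely rests on recovering the points of $X$ as the minimal completely prime filters of $\tau$: this is the content of Maruyama's duality (Theorem \ref{thm:Maruyama duality}), which identifies $M(\tau)=\bool{min}(\bool{Sob}(X))$ with $X$ and makes the comparison $x\mapsto F_x$ a homeomorphism. Making this identification precise --- equivalently, checking that the minimal nonempty closed sets exhaust the one-point closed sets once $X$ is $T_0$ --- is where all the work of the converse is concentrated.
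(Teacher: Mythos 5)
Your forward direction is correct and supplies exactly the content the paper leaves implicit: in a $T_1$-space the complements of singletons are coatoms of $\tau$ and witness strong subfitness directly, and the reduction of strong subfitness to m-spatiality via Proposition \ref{prop:mspatial&stronglysubfit} is the same first step the paper takes, its entire justification of this corollary being ``the previous proposition and Maruyama's duality''.

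The converse, however, contains a genuine gap, and you have in fact put your finger on it yourself: strong subfitness is a property of the abstract frame $\tau$, whereas $T_1$-ness is a property of the pair $(X,\tau)$, and non-homeomorphic spaces can share isomorphic open-set frames. Your two-element-frame example already refutes the statement as literally written, and adding $T_0$ does not repair it: the sobrification of $\mathbb{N}$ with the cofinite topology (see Example \ref{ex:sober}) is a $T_0$, sober, non-$T_1$ space whose frame of opens is isomorphic to the cofinite topology on $\mathbb{N}$ and hence strongly subfit by your own forward direction; its generic point is not closed, even though the closed points do separate the opens. This also shows that your proposed repair --- recovering the points of $X$ as the minimal completely prime filters of $\tau$ via Theorem \ref{thm:Maruyama duality} --- is circular: the identification of $X$ with $M(\tau)$ is the unit of Maruyama's adjunction restricted to $T_1$-spaces, so invoking it presupposes the conclusion, and for the space above the comparison map $x\mapsto F_x$ sends the generic point to a completely prime filter that is not minimal, so it does not even land in $M(\tau)$. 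What Maruyama's duality actually yields is that a frame is strongly subfit if and only if it is isomorphic to the topology of \emph{some} $T_1$-space; the paper's one-line derivation glosses over exactly the same point, so the defect you detected lies in the statement of the corollary and not merely in your argument.
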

In particular we obtain a lattice-theoretic characterization of the $T_1$-separation property which makes no mentions of the points of a space, but only of the algebraic properties of the topology. The following however is open for us:
\begin{question}
Is every subfit and coatomic frame  strongly subfit?
\end{question}
We now turn to the task of giving a nice algebraic characterization of m-homomorphisms.

\begin{definition}
\label{def:subfit-morphism}
Let $P$ and $Q$ be subfit lattices. A  morphism of bounded lattices $i: P \to Q$ is  \emph{strongly subfit} if for every $p \in P$ and for every coatom $q \in Q$ such that $ i(p) \vee q = 1_Q$ there exists $p' \in P$ such that: \begin{align*}
    p \vee p'=1_P
\qquad \text{and} \qquad 
 i(p') \leq q.
\end{align*}
 \end{definition}

 Trivially any closed subfit morphism is a strongly subfit morphism.

 \begin{proposition}
     \label{prop:charsubfitmorphism}
A map \(i:L \to L'\) between strongly subfit frames is a m-homomorphism if and only if it is a strongly subfit frame homomorphism. 
 \end{proposition}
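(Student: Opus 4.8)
The plan is to translate both properties into statements purely about coatoms, using the correspondence between maximal join-complete ideals and coatoms furnished by Lemma~\ref{lemma:ideal&coatom}, together with the elementary \emph{coatom dichotomy}: for a coatom $q'$ of a frame and any element $x$, either $x\le q'$ or $x\vee q'=1$ (since $x\vee q'\ge q'$ forces $x\vee q'\in\{q',1\}$). Both $i$ and its companion are frame homomorphisms on each side of the equivalence, so the whole question concerns only the extra defining clause.

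First I would record a preliminary observation for any frame homomorphism $i\colon L\to L'$: whenever $M=\downarrow q'$ is a maximal join-complete ideal of $L'$ (with $q'$ a coatom by Lemma~\ref{lemma:ideal&coatom}), its preimage $i^{-1}[M]=\{p:i(p)\le q'\}$ is automatically a \emph{proper join-complete ideal} of $L$. Indeed $i$ is order preserving and preserves finite joins (so $i^{-1}[M]$ is an ideal), preserves arbitrary joins while $M$ is join-complete (so $i^{-1}[M]$ is join-complete), and $i(1_L)=1_{L'}\notin M$ (so it is proper). Being join-complete it equals $\downarrow q$ for $q:=\bigvee i^{-1}[M]$, and $i(q)\le q'$ confirms $q\in i^{-1}[M]$. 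Moreover such a $\downarrow q$ is maximal among proper join-complete ideals precisely when $q$ is a coatom: Lemma~\ref{lemma:ideal&coatom} gives one direction, and conversely a strictly larger join-complete ideal would contain some $r\not\le q$, forcing $r\vee q=1$ into it and destroying properness. Thus the \emph{sole} content of the $m$-homomorphism condition is that, for every coatom $q'$ of $L'$, the element $q=\bigvee\{p:i(p)\le q'\}$ is a coatom of $L$.

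For the implication from strongly subfit to $m$-homomorphism I would fix a coatom $q'$ and show the associated $q$ is a coatom. One gets $q<1_L$ at once from $i(q)\le q'<1_{L'}$. Then for any $r$ with $q<r$ we have $r\notin\downarrow q=i^{-1}[\downarrow q']$, so $i(r)\not\le q'$ and hence $i(r)\vee q'=1_{L'}$ by the dichotomy; feeding $p=r$ and this coatom into Definition~\ref{def:subfit-morphism} produces $r'$ with $r\vee r'=1_L$ and $i(r')\le q'$. The latter means $r'\le q<r$, so $r=r\vee r'=1_L$; hence nothing lies strictly between $q$ and $1_L$ and $q$ is a coatom. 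For the converse I would assume $i$ is an $m$-homomorphism, take $p\in L$ and a coatom $q'$ with $i(p)\vee q'=1_{L'}$, and let $q$ be the coatom with $i^{-1}[\downarrow q']=\downarrow q$. The dichotomy forces $i(p)\not\le q'$, i.e. $p\not\le q$, so $p\vee q=1_L$; then $p':=q$ witnesses Definition~\ref{def:subfit-morphism}, since $p\vee p'=1_L$ and $i(p')=i(q)\le q'$.

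I expect the only genuinely delicate point to be the preliminary observation, which does the real bookkeeping by isolating ``$q$ is a coatom'' as the entire substance of the $m$-homomorphism requirement; once this reduction is in hand, each direction is a two-line argument combining the coatom dichotomy with the defining clause of a strongly subfit morphism (Definition~\ref{def:subfit-morphism}), using no further completeness beyond what is already packaged into Lemma~\ref{lemma:ideal&coatom}.
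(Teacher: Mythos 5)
Your proof is correct, and it takes a genuinely different (and more self-contained) route than the paper's, at least in the harder direction. The paper proves that a strongly subfit morphism is an m-homomorphism by working on the filter side: it shows $i^{-1}[F]$ is a minimal completely prime filter using Fact~\ref{fact:idealsandfilters-1} and Corollary~\ref{cor:minprimcompprime&coatom} --- this is essentially the order-dual of your converse direction, so that half is the same argument up to the filter/ideal translation. The real divergence is in the direction ``m-homomorphism $\Rightarrow$ strongly subfit'': the paper detours through topology, first checking that $k_f$ is strongly subfit for any continuous map $f$ between $T_1$-spaces (coatoms being complements of singletons there), and then invoking Maruyama's duality (Theorem~\ref{thm:Maruyama duality}) to represent an arbitrary m-homomorphism as some $k_{\pi^*_i}$. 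You instead isolate, via Lemma~\ref{lemma:ideal&coatom} and the coatom dichotomy, the exact content of the m-homomorphism condition --- namely that $\bigvee\,i^{-1}[\downarrow q']$ is a coatom for every coatom $q'$ --- and then both implications become short lattice computations. Your reduction is sound: $i^{-1}[\downarrow q']$ is indeed a proper join-complete ideal because $i$ preserves arbitrary joins and $i(1_L)=1_{L'}$, and maximality of $\downarrow q$ among proper join-complete ideals is equivalent to $q$ being a coatom by exactly the two arguments you give. What your version buys is independence from Maruyama's duality theorem (the paper's forward direction is only as elementary as that external result) and a uniform, purely algebraic treatment of both directions; what the paper's version buys is that it makes the topological meaning of the strongly subfit condition transparent, by exhibiting it concretely as the inverse-image behaviour of a continuous map on complements of points.
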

\begin{proof} \( \)   
        
        \begin{description}[font=\normalfont\itshape]
        \item[\((\Rightarrow).\)] We start by proving that if \(f:(X,\tau) \to (Y,\sigma)\) is a continuous map between \(T_1\)-topological spaces, then \(k_f\) is a strongly subfit morphism. Let $U \in \sigma$ and $V \in \tau$ be such that \(V\) is a coatom and  $k_f(U) \cup V = X$. Observe that the coatoms of the lattice of the open sets of a \(T_1\)-space are the complements of singletons. Thus, let \(V=X \setminus \bp{x}\) for some \(x \in X\). Clearly, \(f(x) \in U\) since \(x \in k_f(U)=f^{-1}[U]\). Define \(U'=Y \setminus \bp{f(x)}\). Then, clearly, \[
    U \cup U'=Y \quad \quad k_f(U') \subseteq V.
    \]
    Now,    let \(i:L \to L'\) be a m-homomorphism between subfit frames. By Theorem \ref{thm:Maruyama duality},   \(i\) is isomorphic to \(k_{\pi^*_i}\). Hence,  \(i\) is strongly subfit.
        \item[\((\Leftarrow).\)] Let  \(i:L \to L'\) be a strongly subfit morphism of frames. We want to show that \(i\) is a m-homomorphism. This is equivalent to show that for any \(F\) minimal prime filter that is completely prime on \(L'\), \(i^{-1}[F]\) is a minimal prime filter that is completely prime on \(L\). Firstly, we prove that \(i^{-1}[F]\) is a minimal prime filter. We mimic the proof of Proposition \ref{prop:pi_map} with some twists. We leave to the reader to prove that $ i^{-1}[F]$ is a prime filter on $L$,  we only prove its minimality. 
To do this we use again Fact \ref{fact:idealsandfilters-1}.
First we observe that (by Corollary \ref{cor:minprimcompprime&coatom} applied in $L'$) 
\(F=\bp{r \in L': r \not \leq s}\) for some \(s\) coatom in \(L'\). To apply Fact \ref{fact:idealsandfilters-1}, we must show that given $p \in  i^{-1}[F] $
we must find $p'\notin  i^{-1}[F]$ such that $p\vee p'=1_P$: since $F$ is minimal prime and $i(p)\in F$, there exists $q \in L'$ such that $q \notin F$, i.e. \(q \leq s\), and $q \vee i(p) =1_{L'}$  (by Fact \ref{fact:idealsandfilters-1} applied to $F,i(p)$); hence, \(s \vee i(p)=1_L'\); since $i$ is a strongly subfit morphism and $s$ is a coatom of $P$, there exists $p' \in L$ such that $p \vee p' =1_L$ and $i(p') \leq s$; therefore $p' \notin i^{-1}[F]$ (or \(s\) would be in \(F\)), as was to be shown. 


        Now, let us show that \(i^{-1}[F]\) is completely prime. Let \(\{p_i\}_{i \in I}\) be a family of elements of \(L\). Then \begin{align*}
            \bigvee_{i \in I} p_i \in i^{-1}[F]  &\iff i(  \bigvee_{i \in I} p_i) \in F\\
            &\iff  \bigvee_{i \in I} i(p_i) \in F\\
            &\iff i(p_{\bar{i}}) \in F \quad \mathrm{\ for \ some \ }\bar{i} \in I\\
             &\iff p_{\bar{i}} \in i^{-1}[F]  \quad \mathrm{\ for \ some \ }\bar{i} \in I,
        \end{align*}
        where the second equivalence follows from the fact that \(i\) is a morphism of frames.
        \end{description}
\end{proof}

Below we outline that compactness simplifies the lattice-theoretic characterization of a $T_1$-space.

\begin{lemma}
\label{lemma:m-spatial&subfit}\( \)
 If  \(L\) is a complete and compact lattice then it is an m-spatial frame if and only if it is strongly subfit frame if and only if it is subfit.
\end{lemma}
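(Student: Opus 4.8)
The plan is to close a cycle of implications. By Proposition \ref{prop:mspatial&stronglysubfit}, for frames being m-spatial is the same as being strongly subfit, and a strongly subfit lattice is always subfit (as observed immediately after Definition \ref{def:stronglysubfit}). Hence the only substantive implication left to prove is that a complete, compact and subfit lattice $L$ is strongly subfit; this is exactly where completeness and compactness are used.

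To prove it, I would take distinct $p,q \in L$; since they are distinct we may assume $p \not\leq q$, and it suffices to produce a \emph{coatom} $r$ with $p \vee r = 1_L$ and $q \vee r \neq 1_L$ (the symmetric case is handled by swapping the roles of $p$ and $q$). First I would invoke the characterization of subfitness in Fact \ref{fact:charsubfitness}: since $p \not\leq q$, i.e. $q \notin \uparrow p$, there is a minimal prime filter $F$ with $p \in F$ and $q \notin F$.

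The key step is to upgrade this filter. Because $L$ is complete and compact, Lemma \ref{lemma:minprimearecompprime} ensures that $F$ is completely prime, and then Corollary \ref{cor:minprimcompprime&coatom} provides a coatom $s \in L$ with $F = \{ r \in L : r \not\leq s \}$. From $p \in F$ we get $p \not\leq s$, so $s < p \vee s \leq 1_L$; since $s$ is a coatom this forces $p \vee s = 1_L$. From $q \notin F$ we get $q \leq s$, hence $q \vee s = s \neq 1_L$ (again because $s$ is a coatom, so $s < 1_L$). Thus $s$ is the required coatom, and $L$ is strongly subfit.

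The argument is short once the right tools are assembled; the conceptual crux --- and the only place the hypotheses beyond subfitness enter --- is the passage through Lemma \ref{lemma:minprimearecompprime} and Corollary \ref{cor:minprimcompprime&coatom}, which is what converts an arbitrary witness of subfitness into a coatom witness, i.e. turns subfitness into strong subfitness. No genuine obstacle remains beyond correctly citing these facts and checking the coatom inequalities.
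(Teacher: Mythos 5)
Your proof is correct, but it closes the cycle of implications along a slightly different path than the paper. The paper's only substantive step is \emph{subfit} $\Rightarrow$ \emph{m-spatial}: it notes that subfitness gives a large family of minimal prime filters (Proposition \ref{prop:charsubfit}), that by Lemma \ref{lemma:minprimearecompprime} these are all completely prime, and that a large family of completely prime minimal prime filters is exactly m-spatiality. You instead prove \emph{subfit} $\Rightarrow$ \emph{strongly subfit} directly, separating a pair $p\not\leq q$ by a minimal prime filter via Fact \ref{fact:charsubfitness}, upgrading it to a completely prime one by Lemma \ref{lemma:minprimearecompprime}, and then extracting a coatom witness via Corollary \ref{cor:minprimcompprime&coatom}. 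The two arguments pivot on the same key fact (minimal prime filters are completely prime on complete compact lattices) but dualize it differently: the paper stays on the filter side and invokes the ``large family'' characterizations, while you pass to the coatom generating the complementary maximal ideal, which makes the witness for strong subfitness completely explicit. Your route is arguably more self-contained, since it does not need to unwind the definition of m-spatiality in terms of filters.

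One small omission: the conclusion of the lemma asserts that $L$ is an m-spatial \emph{frame} (equivalently a strongly subfit \emph{frame}), whereas the hypothesis only gives a complete and compact lattice. Proposition \ref{prop:mspatial&stronglysubfit}, which you use to identify m-spatiality with strong subfitness, is stated for frames, so to complete the cycle you must also check that a complete, compact, subfit lattice satisfies the infinite distributive law. The paper dispatches this in one line by citing Theorem \ref{thm:adj-dual}: such a lattice is isomorphic to $\tau_{\mathsf{W}(L)}$, the topology of its Wallman space, hence a (spatial) frame. Adding that sentence makes your argument complete.
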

\begin{proof} By \ref{prop:mspatial&stronglysubfit}, a frame is  m-spatial if and only if it is strongly subfit.\\
Now, let \(L\) be an m-spatial frame.
We have already observed that this is equivalent to requiring that the family of its minimal prime filters that are completely prime is large. Hence, by Proposition \ref{prop:charsubfit},  \(L\) is subfit.  \\ Finally, suppose  \(L\)  is a complete, compact and subfit lattice. By Theorem \ref{thm:adj-dual}, it is a frame. Moreover, by Lemma \ref{lemma:minprimearecompprime}, the family of its minimal prime filters that are completely prime is exactly the family of its minimal prime filters. Hence, by Proposition \ref{prop:charsubfit} it is m-spatial. 
    
\end{proof}
The previous lemmata connect Maruyama's duality to ours.

\begin{corollary}
Maruyama's duality given by 
  \((\mathcal{F}, \mathcal{G})\) as in Thm. \ref{thm:Maruyama duality} 
restricts to a duality between \(\bool{Cpct}-\bool{T}_1\) and \(\bool{Cpl-Cpct-SbfL}\), where the forme former is given by compact $T_1$-spaces with \emph{arbitrary} continuous maps, and the latter is the category of complete compact subfit lattices with strongly subfit morphisms between them.
\end{corollary}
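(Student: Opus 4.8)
The plan is to invoke the general principle that a dual equivalence restricts to any pair of full subcategories that correspond to one another under its two functors; all the genuine content then lies in checking the object- and morphism-level correspondences, each of which is supplied by a lemma already proved. First I would establish that $\bool{Cpct}-\bool{T}_1$ and $\bool{Cpl-Cpct-SbfL}$ are full subcategories of $\bool{T}_1$ and $\bool{mSpFrm}$ respectively. On the topological side this is immediate, since $\bool{Cpct}-\bool{T}_1$ retains \emph{all} continuous maps between compact $T_1$-spaces. On the algebraic side, Lemma \ref{lemma:m-spatial&subfit} identifies the complete, compact, subfit lattices with the complete, compact, m-spatial frames, so they are objects of $\bool{mSpFrm}$; being m-spatial they are strongly subfit frames (Proposition \ref{prop:mspatial&stronglysubfit}), whence Proposition \ref{prop:charsubfitmorphism} shows that a map between two such lattices is an m-homomorphism if and only if it is a strongly subfit morphism. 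Thus the arrows of $\bool{Cpl-Cpct-SbfL}$ are exactly the $\bool{mSpFrm}$-arrows between its objects, confirming fullness.

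Next I would verify that $\mathcal{F}$ and $\mathcal{G}$ carry the distinguished objects into one another. If $(X,\tau)$ is a compact $T_1$-space, then by Fact \ref{fact:completecomplact} the lattice $\tau$ is complete, compact, and subfit, so $\mathcal{F}(X,\tau)=\tau$ lands in $\bool{Cpl-Cpct-SbfL}$. Conversely, if $L$ is a complete, compact, subfit lattice, then by Lemma \ref{lemma:minprimearecompprime} every minimal prime filter on $L$ is completely prime, so $M(L)$ coincides with the set of \emph{all} minimal prime filters and $\mathcal{G}(L)=(M(L),\tau_L)=\corn{L}$; this is a compact $T_1$-space by Theorem \ref{thm:compactT1}, hence an object of $\bool{Cpct}-\bool{T}_1$. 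On arrows, $\mathcal{F}(f)=k_f$ is an m-homomorphism by Maruyama's duality and therefore a strongly subfit morphism by the identification above, while $\mathcal{G}(i)=\pi^*_i$ is a continuous map, which is precisely what an arrow of $\bool{Cpct}-\bool{T}_1$ must be.

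Having shown that $\mathcal{F}$ and $\mathcal{G}$ restrict to functors between the two full subcategories, the conclusion follows formally: the unit and counit natural isomorphisms of the duality $(\mathcal{F},\mathcal{G})$ in Theorem \ref{thm:Maruyama duality} have, at each object of the subcategories, a component landing again in the subcategory (since $\mathcal{F}$ and $\mathcal{G}$ preserve the distinguished classes) which is already an isomorphism in the ambient category and hence, by fullness, in the subcategory; restricting componentwise therefore yields natural isomorphisms $\mathcal{G}\mathcal{F}\cong\id$ and $\mathcal{F}\mathcal{G}\cong\id$ on $\bool{Cpct}-\bool{T}_1$ and $\bool{Cpl-Cpct-SbfL}$. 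I do not expect a genuine obstacle here; the only point demanding care is purely one of bookkeeping, namely that $\bool{Cpl-Cpct-SbfL}$ is presented with the \emph{a priori} different class of strongly subfit morphisms rather than m-homomorphisms. This is exactly what Proposition \ref{prop:charsubfitmorphism} reconciles, so the whole argument amounts to assembling the cited results rather than producing a new one.
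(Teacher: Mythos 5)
Your proposal is correct and follows essentially the same route as the paper: the paper's own proof is a two-line citation of Lemma \ref{lemma:minprimearecompprime} (to get $(M(L),\tau_L)=\corn{L}$), Proposition \ref{prop:charsubfitmorphism}, Lemma \ref{lemma:m-spatial&subfit} and Theorem \ref{thm:adj-dual}, which are exactly the ingredients you assemble. You merely spell out in more detail the standard bookkeeping about restricting a duality to corresponding full subcategories, which the paper leaves implicit.
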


\begin{proof}
Since any minimal prime filter on a compact, complete and subfit lattice \(L\) is completely prime by Lemma \ref{lemma:minprimearecompprime}, \((M(L), \tau_L) = \corn{L}\). Thus, the thesis follows immediately from Proposition \ref{prop:charsubfitmorphism}, Lemma \ref{lemma:m-spatial&subfit} and Theorem \ref{thm:adj-dual}. 
\end{proof}

\subsection{Connections with other dualities}

We can also note that the restriction to compact Hausdorff spaces of Maruyama's duality coincides with Isbell's duality restricted to those spaces. This  follows from the observation that completely prime filters of normal, compact and subfit frames are exactly the minimal prime filters (note that the preceding results show that on compact $T_1$-spaces minimal prime filters are completely prime, but not the converse inclusion). To see this note that normal and subfit frames are regular (by \cite[Prop. 5.9.2]{picado2011frames}), and every completely prime filter on a regular frame is minimal prime (by \cite[Lemma 5.3]{bezhanishvili}). \\

There are also relevant connections between our results and those in \cite{bice2020wallmandualitysemilatticesubbases}; in the latter paper a functorial correspondence is defined between \emph{subbases closed under finite unions} for compact \(T_1\)-spaces (\(\cup\bool{Sub}\)) and a suitable category of semi-lattices (\(\vee\bool{Semi}\)), which -on the objects- is defined by a minor variation of our notion of subfitness. 
However, a substantial distinction between \cite{bice2020wallmandualitysemilatticesubbases} and the present results lies in the definition of arrows: in \cite[Def. 8.4]{bice2020wallmandualitysemilatticesubbases}, arrows between semi-lattices in \(\vee\bool{Semi}\) are defined as particular types of relations called \emph{\(\vee\)-relations} \cite[Def. 8.4]{bice2020wallmandualitysemilatticesubbases}, while arrows in \(\cup\)-subbases correspond to partial functions on the corresponding topological spaces that are \textit{very continuous} \cite[Def. 8.1]{bice2020wallmandualitysemilatticesubbases}. 
When restricted to complete, compact, subfit lattices, the two notions of arrow coincide, more precisely:  
by  Theorem \ref{thm:adj-dual}, \cite[Thm. 8.9]{bice2020wallmandualitysemilatticesubbases}, and \cite[Thm. 8.6]{bice2020wallmandualitysemilatticesubbases}, the closed subfit morphisms between complete, compact, and subfit lattices are exactly the \(\vee\)-relations between them as in \cite[Def. 8.4]{bice2020wallmandualitysemilatticesubbases} (i.e the arrows of \(\vee\bool{Semi}\)), and similarly the very continuous functions map between compact $T_1$-spaces according to the the maximal $\cup$-subbase given by the topology are the closed continuous maps, furthermore on this classes of spaces/lattices the duality presented in Thm. \ref{thm:adj-dual}(\ref{thm:adj-dual-3})  is exactly the one given in \cite[Thm. 7.9]{bice2020wallmandualitysemilatticesubbases} for objects and \cite[Sec. 8]{bice2020wallmandualitysemilatticesubbases} for arrows.

\section{Appendix: Recap on Isbell's duality for spatial frames and Stone's duality for distributive lattices}\label{Appendix}
We collect her the results and terminology we usesd in Section \ref{sec:relation with pre-existence literature} to compare the duality presented in Section \ref{sec:adjunctions} to Isbell's and Stone dualities.
We follow closely \cite{gehrke2024topological}.

Recall the following definitions:
\begin{itemize}

\item A \emph{frame} $(L,\wedge,\vee)$ is a complete distributive lattice such that the infinitary distributive law
\[
a\wedge\bigvee A=\bigvee\bp{a\wedge b:b\in A}
\]
holds for all $a\in L$ and $A\subseteq L$.
\item A map between frames is a \emph{frame homomorphism} if it preserves finite meets and arbitrary joins.
\item  An element \(m\) of a frame \(L\) is  \emph{meet-irreducible} if \(m= \bigwedge S\) with \(S \subseteq L\) finite implies that \(m \in S\).
 \item  An element $k$ of a frame $L $ is \emph{compact} if, for every directed subset $S \subseteq L$:
\[
    k \leq \bigvee S \implies \exists s \in S, \quad k \leq s.
\]
\item
A filter $F$ on a frame $L$ is \emph{completely prime} if for all $A\subseteq L$ with $\bigvee A\in F$, there is some $a\in A\cap F$. Its dual is a \emph{join-complete} prime ideal.
\item
A frame $L$ is \emph{spatial} if for all \(a,b \in L\) if \(a \not \leq b\), then there exists a completely prime filter \(F\) on \(L\) such that \(a \in F\) and \(b \notin F\). 
\item 
A frame $L$ is \emph{coherent} if its compact elements form a sublattice which generates $L$ by directed joins. 

\item A topological space $(X,\tau)$ is  \emph{sober} if  every irreducible closed subset of $X$ is the closure of a unique point. 
   
    \end{itemize}

Observe that if $L$ is a frame and $\mathcal{A}$ is the family of its completely prime filters, then requiring that $\mathcal{A}$ is large for $L$ is equivalent to requiring that $L$ is spatial. \\

\begin{notation}
    \label{not:stonedualspace}
    Let $L$ be a distributive bounded lattice.  
\begin{itemize}
    \item $\mathrm{Idl(L)}$ is the set of the ideals on \(L\);
    \item $\mathrm{PrFilt}(L)$ is the set of the prime filters on $L$;
    \item $\mathrm{PrIdl}(L)$  is the set of the prime ideals on $L$; 
    \item $\mathrm{Hom_{DL}}(L, \textbf{2})$ is the set of the bounded distributive lattice homomorphisms from $L$ to $\textbf{2}$. 
\end{itemize}
\end{notation}

\subsubsection{Stone's duality for distributive lattices}

\begin{definition}(Gehrke, van Gool \cite[Def 6.3]{gehrke2024topological})
\label{def:stonedualspace}
 Let \( L \) be a distributive lattice. The \emph{Stone dual space} of \( L \) or \emph{spectral space} of \( L \), denoted \( \mathrm{St}(L) \), is a topological space \( (X, \tau) \), where \( X \) comes with bijections $ F_{(-)}: X \to \mathrm{PrFilt}(L),  I_{(-)}: X \to \mathrm{PrIdl}(L) ,  h_{(-)}: X \to \mathrm{Hom_{DL}}(L, \textbf{2}) $ so that for all \( x \in X \) and all \( p\in L \), we have
\[
p \in F_x \iff p \notin I_x \iff h_x(p) = \top
\]
and the topology \( \tau \) is generated by the sets
\[
\eta(p) = \{ x \in X : p \in F_x \} = \{ x \in X : p \notin I_x \} = \{ x \in X : h_x(p) = \top \},
\]
where \( p \) ranges over the elements of \( L \).
\end{definition}

\begin{notation}
\label{not:DL-Spec}
\emph{}
\begin{itemize}
\item
\label{not:DL}
$\bool{DL}$ denotes the category whose objects are distributive bounded lattices with arrows given by the homorphisms between them.
\item
\label{not:Spec}
$\bool{Spec}$ denotes the category whose objects are spectral spaces with arrows given by spectral maps.\footnote{For the definitions of spectral spaces and spectral maps, see \cite[Def 6.2]{gehrke2024topological}.}
\end{itemize}
\end{notation}

\cite[Thm. 6.4, Thm. 5.38]{gehrke2024topological} shows that the correspondence between distributive bounded lattices and their Stone
dual spaces is functorial. In other words, we can define a functor
\[\mathrm{St}: \bool{DL}^{\bool{op}} \to \bool{Spec}\] that assigns to each $L$ its Stone dual space and to every 
homomorphism $h: L \to M$  the function $f: X_M \to X_L$ given by
requiring that
\[
F_{f(x)} = h^{-1}[F_x], \quad \text{for every } x \in X_M.
\]
Moreover, this functor is actually an equivalence, where the inverse is the functor
\[\mathcal{KO}: \bool{Spec}^{\bool{op}} \to \bool{DL}\] that assigns to each $(X,\tau)$ the lattice of its compact-open sets equipped with the inclusion and to every 
spectral map $f: (X,\tau) \to (Y,\sigma)$  the homomorphism $h: \mathcal{KO}(Y) \to \mathcal{KO}(X)$ given by \[
h(U)=f^{-1}[U].
\]

\subsubsection{Isbell's duality}

Now, let us recall Isbell's duality (also known as Omega-point duality) between the category of spatial frames and the category of sober spaces.

\begin{notation}
\label{not:pt(L)}
Let \(L\) be a frame. 
\begin{itemize}
   \item $\mathrm{CompPrFilt}(L)$ is the set of the completely prime filters on $L$;
   \item $\mathcal{M}(L)$ denotes the poset of the meet-irreducible elements of $L$; 
   \item $\mathrm{Hom_{Frm}}(L, \textbf{2})$ is the set of the frame homomorphisms from $L$ to $\textbf{2}$; 
   \item $\mathrm{K}(L)$ is the set of the compact elements of $L$.
   
\end{itemize}
\end{notation}

\begin{definition}(Gehrke, van Gool \cite[Def 6.12]{gehrke2024topological}) \label{def:isbellpoint}
       Let $L$ be a frame. We denote by $\bool{Pt}(L)$ a topological space which is determined up to homeomorphism by the fact that it comes with three bijections:
\begin{itemize}
    \item $F_{(-)}: \bool{Pt}(L) \to \mathrm{CompPrFilt}(L)$,
    \item $m: \bool{Pt}(L) \to \mathcal{M}(L)$,
    \item $f_{(-)}: \bool{Pt}(L) \to \mathrm{Hom_ {Frm}}(L,  \textbf{2})$,
\end{itemize}
satisfying, for any $x \in \bool{Pt}(L)$ and $a \in L$,
\[
a \in F_x \iff a \not \leq m(x) \iff f_x(a) = 1.
\]
The opens of $\bool{Pt}(L)$ are defined as:
\[
\hat{a} = \{ x \in \bool{Pt}(X) \mid a \in F_x \}.
\]
\end{definition}

\begin{notation}
\label{not:Frm-Top}
\emph{}
\begin{itemize}
\item 
\label{not:Frm}
    $\bool{Frm}$ denotes the category of frames with  frame homomorphisms.
    \item 
\label{not:SpFrm}
    $\bool{SpFrm}$ denotes the full subcategory of \(\bool{Frm}\) whose objects are spatial frames.
    \item
    \label{not:CohFrm}
    $\bool{CohFrm}$ denotes the subcategory of \(\bool{Frm}\) whose objects are coherent frames with coherent maps \footnote{For the definition of coherent map see \cite{gehrke2024topological}}.
\item
\label{not:Top}
$\bool{Top}$ denotes the category whose objects are topological spaces with continuous maps.
    \item 
\label{not:Sob}
    $\bool{Sob}$ denotes the full subcategory of \(\bool{Top}\) whose objects are sober spaces.
\end{itemize}
\end{notation}

Let
    \[ \bool{Pt}: \bool{Frm}^{\bool{op}} \to \bool{Top} \]
    be the functor that assigns to a frame \(L\) its space \(\bool{Pt(L)}\) and to a frame homomorphism \( h: L \to M \) the continuous map \( \bool{Pt}(h): \bool{Pt}(M) \to \bool{Pt}(L) \) given by
    \[
    \bool{Pt}(h)(x) = y \iff h^{-1}(F_y) = F_x \iff h^{-1}(\downarrow m(y)) = \downarrow m(x) \iff f_y = f_x \circ h.
    \]

On the other hand, let 
 \[ \Omega: \bool{Top}^{\bool{op}} \to \bool{Frm} \]
be the functor that assigns to a topological space \(X\) the frame of its open sets \(\Omega(X)\) and to a continuous map \( f : X \to Y \) between 
 topological spaces the homomorphism of frames 
\[
\Omega(f): \Omega(Y) \to \Omega(X)
\]
given by \( U \to f^{-1}[U] \).

\begin{theorem} (Gehrke, van Gool \cite[Thm. 6.13, Thm. 6.19]{gehrke2024topological})
    \label{thm:omega-ptduality}
    The functors \( (\bool{Pt}, \Omega) \) form a contravariant adjunction between \(\bool{Frm}\) and \(\bool{Top}\), which restricts to a duality between \(\bool{SpFrm}\) and \(\bool{Sob}\).
\end{theorem}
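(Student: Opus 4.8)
The plan is to produce, for each space $X$ and each frame $L$, a unit $\eta_X\colon X\to\bool{Pt}(\Omega(X))$ and a counit $\epsilon_L\colon L\to\Omega(\bool{Pt}(L))$, to verify that these are respectively a continuous map and a frame homomorphism and are natural in their arguments, to deduce the contravariant adjunction from a natural hom-set bijection, and finally to characterise when $\eta_X$ and $\epsilon_L$ are isomorphisms so as to read off the duality.

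First I would define $\epsilon_L(a)=\hat a=\bp{x\in\bool{Pt}(L):a\in F_x}$. This is a frame homomorphism: it sends finite meets and the top to the corresponding intersections and to the whole space, and it sends an arbitrary join $\bigvee A$ to $\bigcup_{a\in A}\hat a$, the join step being exactly the complete primality of each $F_x$; by Definition \ref{def:isbellpoint} its image is the defining base of the topology of $\bool{Pt}(L)$. Dually I would set $\eta_X(x)=\bp{U\in\Omega(X):x\in U}$, the neighbourhood filter, which is a completely prime filter and hence a point of $\bool{Pt}(\Omega(X))$; continuity is immediate from $\eta_X^{-1}(\hat U)=U$ for all $U\in\Omega(X)$. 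Naturality of both families is a routine comparison with the definitions of $\bool{Pt}(h)$ and $\Omega(f)$ recorded in the Appendix.

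For the adjunction proper I would exhibit the natural bijection $\bool{Top}(X,\bool{Pt}(L))\cong\bool{Frm}(L,\Omega(X))$ directly. To a continuous $\phi\colon X\to\bool{Pt}(L)$ I assign the frame homomorphism $a\mapsto\phi^{-1}(\hat a)=\phi^{-1}(\epsilon_L(a))$, and to a frame homomorphism $h\colon L\to\Omega(X)$ I assign $\phi_h(x)=\bp{a\in L:x\in h(a)}$, which is a completely prime filter precisely because $h$ preserves finite meets, the top and arbitrary joins, and which is continuous because $\phi_h^{-1}(\hat a)=h(a)$. A short computation shows these assignments are mutually inverse: starting from $h$ one recovers $h(a)=\phi_h^{-1}(\hat a)$, and starting from $\phi$ one recovers $\phi_h(x)=F_{\phi(x)}$, i.e. $\phi$ itself. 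Naturality in both variables then yields the contravariant adjunction $(\bool{Pt},\Omega)$ with unit $\eta$ and counit $\epsilon$ (equivalently one verifies the two triangle identities).

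It remains to cut the adjunction down to a duality. On the algebraic side $\epsilon_L$ always preserves the frame structure and is onto a generating family, so it is an isomorphism exactly when it is injective, i.e. when $a\neq b$ forces $\hat a\neq\hat b$; by the remark following the definitions in the Appendix this is precisely the largeness of the completely prime filters, that is, the spatiality of $L$. On the topological side $\eta_X$ is always an embedding onto its image (the traces of the $\hat U$ recover the opens of $X$, and injectivity holds iff $X$ is $T_0$), so it is a homeomorphism exactly when it is surjective, which says that every completely prime filter of $\Omega(X)$ is the neighbourhood filter of a unique point. Since $\bool{Pt}(L)$ is sober and $\Omega(X)$ is spatial for every $L$ and $X$ (for the latter, neighbourhood filters separate distinct opens), the two functors land in the stated subcategories, and the two isomorphism criteria give the claimed duality between $\bool{SpFrm}$ and $\bool{Sob}$. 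The delicate point is the soberness criterion: identifying the completely prime filters of $\Omega(X)$ with the meet-irreducible opens and hence with the irreducible closed subsets of $X$, and checking that surjectivity of $\eta_X$ amounts to each such closed set being the closure of a unique point; the frame-homomorphism checks and the hom-set bijection are comparatively routine.
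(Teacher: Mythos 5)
Your proposal is correct, but note that the paper does not prove this statement at all: Theorem \ref{thm:omega-ptduality} is quoted verbatim from Gehrke and van Gool \cite[Thm.\ 6.13, Thm.\ 6.19]{gehrke2024topological} as background material in the Appendix, so there is no in-paper argument to compare against. What you have written is essentially the standard unit/counit proof found in the cited source (counit $a\mapsto\hat a$, unit $x\mapsto$ neighbourhood filter, the hom-set bijection, and the identification of the isomorphism conditions with spatiality and sobriety respectively), and it is sound; the only cosmetic point is that $\eta_X$ is an embedding only when $X$ is $T_0$, as your own parenthetical already concedes, so the phrase ``always an embedding onto its image'' should be tempered accordingly.
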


A distributive lattice $L$ is isomorphic to the lattice of compact elements of its ideal completion, i.e., \(\mathrm{K}(\mathrm{Idl}(L)) \cong L\). Moreover, it follows straightforwardly from the definition that \(F \cong \bool{Idl}(\mathrm{K}(L))\) for any coherent frame \(F\). Therefore, the following diagram  is commutative \emph{on the objects}:

\begin{equation}
\label{diag:st-pt}
\begin{tikzcd}
    \bool{Sob} \supseteq \bool{Spec} 
    \arrow[r, leftrightarrow, "\mathcal{KO}\text{-St}"] 
    \arrow[rd,leftrightarrow,  "\Omega\bool{-Pt}"'] 
    & \bool{DL} 
    \arrow[d,leftrightarrow,  "\bool{Idl-}\mathcal{K}"] \\
    & \bool{CohFrm \subseteq SpFrm}
\end{tikzcd}
\end{equation}

\bibliographystyle{plain}
\bibliography{biblio}

\end{document}